\begin{document}
\newcommand{\be}{\begin{equation}}
\newcommand{\ee}{\end{equation}}
\newcommand{\ba}{\begin{array}}
\newcommand{\ea}{\end{array}}
\newcommand{\bea}{\begin{eqnarray*}}
\newcommand{\eea}{\end{eqnarray*}}
\newcommand{\bean}{\begin{eqnarray}}
\newcommand{\eean}{\end{eqnarray}}

\newtheorem{theorem}{Theorem}[section]
\newtheorem{lemma}{Lemma}[section]
\newtheorem{remark}{Remark}[section]
\newtheorem{proposition}{Proposition}[section]
\newtheorem{definition}{Definition}[section]
\newtheorem{corollary}{Corollary}[section]
\newtheorem{algorithm}{Algorithm}[section]

\newcommand{\lc}{\mathrel{\raise2pt\hbox{${\mathop<\limits_{\raise1pt\hbox{\mbox{$\sim$}}}}$}}}
\newcommand{\gc}{\mathrel{\raise2pt\hbox{${\mathop>\limits_{\raise1pt\hbox{\mbox{$\sim$}}}}$}}}
\newcommand{\ec}{\mathrel{\raise1pt\hbox{${\mathop=\limits_{\raise2pt\hbox{\mbox{$\sim$}}}}$}}}
\newcommand{\tol}{\delta}
\newcommand{\supp}{{\rm supp}}
\newcommand{\III}{|\!|\!|}
\newcommand{\Label}[1]{\label{#1}{{\mbox{$\;$\fbox{\tiny\tt #1}$\;$}}}}
\renewcommand{\Label}[1]{\label{#1}}
\renewcommand{\theequation}{\arabic{section}.\arabic{equation}}
\renewcommand{\thetheorem}{\arabic{section}.\arabic{theorem}}
\renewcommand{\thelemma}{\arabic{section}.\arabic{lemma}}
\renewcommand{\theproposition}{\arabic{section}.\arabic{proposition}}
\renewcommand{\thedefinition}{\arabic{section}.\arabic{definition}}
\renewcommand{\thecorollary}{\arabic{section}.\arabic{corollary}}
\renewcommand{\thealgorithm}{\arabic{section}.\arabic{algorithm}}
\renewcommand{\thefigure}{\arabic{section}.\arabic{figure}}

\newcommand{\lan}{\langle}
\newcommand{\curl}{{\bf curl \;}}
\newcommand{\rot}{{\rm curl}}
\newcommand{\grad}{{\bf grad \;}}
\newcommand{\dvg}{{\rm div \,}}
\newcommand{\ran}{\rangle}
\newcommand{\bR}{\mbox{\bf R}}
\newcommand{\bRn}{{\bf R}^3}
\newcommand{\Coinf}{C_0^{\infty}}
\newcommand{\disp}{\displaystyle}
\newcommand{\proof}{\vspace{1ex}\noindent{\em Proof}. \ }
\newcommand{\ra}{\rightarrow}
\newcommand{\Ra}{\Rightarrow}
\newcommand{\ud}{u_{\delta}}
\newcommand{\Ed}{E_{\delta}}
\newcommand{\Hd}{H_{\delta}}
\newcommand\varep{\varepsilon}
\title{Convergence and Optimal Complexity of Adaptive Finite Element
Methods\thanks{This work was supported by the National Natural
Science Foundation of China (10425105 and 10871198) and the National
Basic Research Program of China (2005CB321704).}}

\author{Lianhua He\thanks{LSEC, Institute of Computational Mathematics and
Scientific/Engineering Computing, Academy of Mathematics and Systems
Science, Chinese Academy of Sciences, Beijing 100190, China and,
Graduate University of Chinese Academy of Sciences, Beijing 100190,
China ({helh@lsec.cc.ac.cn}).}  \and Aihui Zhou\thanks{LSEC,
Institute of Computational Mathematics and Scientific/Engineering
Computing, Academy of Mathematics and Systems Science, Chinese
Academy of Sciences, Beijing 100190, China
({azhou@lsec.cc.ac.cn}).}}

%\maketitle

\date{}

\maketitle
\begin{abstract}
In this paper, we study adaptive finite element approximations in a
perturbation framework, which makes use of the existing adaptive
finite element analysis of a linear symmetric elliptic problem. We
prove the convergence and complexity of  adaptive finite element
methods for a class of elliptic partial differential equations. For
illustration, we apply the general approach to obtain the
convergence and complexity of adaptive finite element methods for  a
nonsymmetric problem, a nonlinear problem as well as an unbounded
coefficient eigenvalue problem.
\end{abstract}

%\begin{keywords}
{\bf Keywords:}\quad Adaptive finite element, convergence,
complexity, eigenvalue, nonlinear, nonsymmetric, unbounded.
%\end{keywords}

%\begin{AMS}
{\bf AMS subject classifications:}\quad
  65N15, 65N25, 65N30
%\end{AMS}

\section{Introduction}

The purpose of this paper is to study the convergence and complexity
of  adaptive finite element computations for a class of elliptic
partial differential equations of second order and to apply our
general approach to  three  problems: a nonsymmetric problem, a
nonlinear problem, and an eigenvalue problem with an unbounded
coefficient. One technical tool for motivating this work is the
relationship between the general problem and a linear symmetric
elliptic problem, which is derived from some perturbation arguments
(see Theorem \ref{thm-general-boundary} and Lemma
\ref{lemma-bound-general} ).

Since Babu{\v s}ka and Vogelius \cite{babuska-vogelius-84} gave an
analysis of an adaptive finite element method (AFEM) for linear
symmetric elliptic problems in $1D$, there are a number of work on
the convergence and complexity of adaptive finite element methods in
the literature. For instance, D{\" o}rfler \cite{dorfler-96}
presented the first multidimensional convergence result, which has
been improved and generalized in
\cite{binev-dehmen-devore-04,cascon-kreuzer-nochetto-siebert-07,Dai-Xu-Zhou-08,
mekchay-nochetto-05,morin-nochetto-siebert-00,morin-nochetto-siebert-02,morin-siebrt-veeser-08,
stevenson-06a}.
%by Morin, Nochetto, and
%Siebert\cite{morin-nochetto-siebert-00}, Binev, Dehmen, and DeVore
%\cite{binev-dehmen-devore-04}, Mekchay and Nochetto
%\cite{mekchay-nochetto-05}, Stevenson \cite{stevenson-06a},
%Nochetto, Schmidt, Siebert, and Veeser
%\cite{nochetto-schmidt-siebert-veeser-06}, Cascon, Kreuzer,
%Nochetto, and Siebert \cite{cascon-kreuzer-nochetto-siebert-07}, and
%Morin, Siebrt, and Veeser \cite{morin-siebrt-veeser-08}.
For a nonsymmetric problem, in particular,  Mekchay and Nochetto
\cite{mekchay-nochetto-05} imposed a quasi-orthogonality property
instead of the Pythagoras equality to prove the convergence of AFEM
while  Morin, Siebrt, and Veeser \cite{morin-siebrt-veeser-08}
showed the convergence of error and estimator simultaneously with
the strict error reduction and derived the convergence of the
estimator by exploiting the (discrete) local lower but not the upper
bound. To our best knowledge, however, there has been no any work on
the complexity of AFEM for nonsymmetric elliptic problems in the
literature.  In this paper, we can get the convergence and optimal
complexity of nonsymmetric problems from our general approach. For a
nonlinear problem,
%Vesser in \cite{veeser-02} gave the analysis of convergence of nonlinear Laplacian,
Chen, Holst and Xu  \cite{chen-holst-xu} proved the convergence of
an adaptive finite element algorithm for  Poisson-Boltzmann equation
%and Diening and Kreuzer in \cite{Diening-Kreuzer} showed the convergence
%of p-Laplacian without extra marking for the oscillation
while we are able to obtain the convergence and optimal complexity
of AFEM for a class of nonlinear problems now. For a smooth
coefficient eigenvalue problem, Dai, Xu, and Zhou
\cite{Dai-Xu-Zhou-08} gave the convergence and  optimal complexity
of AFEM for symmetric elliptic eigenvalue problems with piecewise
smooth coefficients (see, also convergence analysis of a special
case \cite{Garau-Morin-Zuppa-08,Giani-Graham-09}). In this paper we
will derive similar results for unbounded coefficient eigenvalue
problems from our general conclusions, too. We mention that a
similar perturbation approach was used in \cite{Dai-Xu-Zhou-08}.

This paper is organized as follows. In section 2 we review some
existing results on the  convergence and complexity analysis of AFEM
for the typical problem. In section 3 we generalize results to a
general model problem by using a perturbation argument. In section 4
and section 5, we provide three typical applications for
illustration, including theory and numerics.

\section{Adaptive FEM for a typical problem}\label{sc:linear
boundary} In this section, we review some existing results on the
convergence and complexity analysis of AFEM  for a boundary value
problem in the literature.

Let $\Omega\subset \mathbb{R}^d(d\ge 2)$ be a bounded polytopic
domain. We shall use the standard notation for Sobolev spaces
$W^{s,p}(\Omega)$ and their associated norms and seminorms, see,
e.g., \cite{adams-75,ciarlet-lions-91}. For $p=2$, we denote
$H^s(\Omega)=W^{s,2}(\Omega)$ and $H^1_0(\Omega)=\{v\in H^1(\Omega):
 v\mid_{\partial\Omega}=0\}$, where $v\mid_{\partial\Omega}=0$ is understood in
the sense of trace, $\|\cdot\|_{s,\Omega}= \|\cdot\|_{s,2,\Omega}$.
%and $\|\cdot\|_{\Omega}=\|\cdot\|_{0,2,\Omega}$.
Throughout this
paper, we shall use $C$ to denote a generic positive constant which
may stand for different values at its different occurrences.
%For convenience, following Xu \cite{xu-92}, the symbol $\lc$ will be
%used in this paper.
We will also use $A\lc B$ to mean that $A\le C B$ for some constant
$C$  that is independent of mesh parameters. All constants involved
are independent of mesh sizes.
\subsection{A boundary value problem}
 Consider a homogeneous boundary value
problem:
\begin{equation}\label{problem}
\left\{\begin{array}{rl}
 Lu :=-\nabla \cdot (\mathbf{A}\nabla u)&=f \,\,\, \mbox{in} \quad \Omega,\\
u &= 0\,\,\,\mbox{on}~~\partial\Omega,
\end{array}\right.
\end{equation}
 where $\mathbf{A}:\Omega\rightarrow  \mathbb{R}^{d\times d}$ is piecewise
Lipschitz over initial triangulation $\mathcal {T}_{0}$, for $x\in
\Omega$  matrix $\mathbf{A(x)}$ is symmetric and positive definite
with smallest eigenvalue uniformly bounded away from 0, and $f\in
L^{2}(\Omega)$.

\begin{remark}\label{remark-boundary}
The choice of homogeneous boundary condition is made for ease of
presentation, since similar results are valid for other boundary
conditions \cite{cascon-kreuzer-nochetto-siebert-07}.
\end{remark}

The weak form of (\ref{problem}) reads as follows: Find $u\in
H^1_0(\Omega)$ such that
\begin{eqnarray} \label{variation}
a(u, v)= (f, v) \qquad\forall v\in H^1_0(\Omega),
\end{eqnarray}
where $a(\cdot,\cdot)=(\mathbf{A}\nabla \cdot,\nabla \cdot)$. It is
seen that $a(\cdot,\cdot)$ is bounded and coercive on
$H^1_0(\Omega)$, i.e., for any $w, v\in H^1(\Omega)$ there exist
constants $0<c_a\le C_a<\infty$ such that
\begin{eqnarray*}
|a(w, v)| \leq C_a \|w\|_{1, \Omega} \|v\|_{1, \Omega} \quad
\textnormal{and} \quad  c_a\|v\|^2_{1,\Omega} \leq a(v,v) ~~\forall
v\in H^1_0(\Omega).
\end{eqnarray*}
%where $a(\cdot,\cdot)$ is defined by
%$$
%a(u, v)=(\mathbf{A}\nabla u,\nabla v).
%$$
%It is seen that $a(\cdot,\cdot)$ is bounded and coercive on
%$H^1_0(\Omega)$, i.e., there exist constants $0<c_a\le C_a<\infty$
%such that
%\begin{eqnarray*}
%|a(w, v)| \leq C_a \|w\|_{1, \Omega} \|v\|_{1, \Omega} \qquad
%\forall w, v\in H^1(\Omega),
%\end{eqnarray*}
%\begin{eqnarray*}
%c_a\|v\|^2_{1,\Omega} \leq a(v,v) \qquad \forall v \in
%H^1_0(\Omega).
%\end{eqnarray*}
The energy norm $\|\cdot\|_{a,\Omega}$ , which is equivalent to
$\|\cdot\|_{1,\Omega}$ , is defined by
$\|w\|_{a,\Omega}=\sqrt{a(w,w)}$ . It is known that
(\ref{variation}) is well-posed, that is, there exists a unique
solution for any $f\in H^{-1}(\Omega)$.

Let $\{\mathcal{T}_h\}$ be a shape regular family of nested
conforming meshes over $\Omega$: there exists a constant
$\gamma^{\ast}$ such that
\begin{eqnarray*}
\frac{h_\tau}{\rho_\tau} \leq \gamma^{\ast} ~~~ \forall \tau \in
\bigcup_h\mathcal{T}_h,
\end{eqnarray*}
where, for each $\tau\in \mathcal{T}_h$, $h_\tau$ is the diameter of
$\tau$,  $\rho_\tau$ is the diameter of the biggest ball contained
in $\tau$, and
 $h=\max\{h_\tau: \tau\in \mathcal{T}_h\}$. Let $\mathcal{E}_h$ denote
the set of interior sides (edges or faces) of $\mathcal{T}_h$. Let
$S_0^h(\Omega) \subset H_0^1(\Omega)$ be a family of nested finite
 element spaces consisting of continuous piecewise polynomials
 over $\mathcal{T}_h$ of fixed degree $n \geq 1$, which vanish on
$\partial\Omega$.

Define the Galerkin-projection $P_h:H^1_0(\Omega) \rightarrow
S_0^h(\Omega)$ by
\begin{eqnarray}\label{projection}
a(u-P_h u, v) =0 \quad\forall v\in S_0^h(\Omega).
\end{eqnarray}
For any $u\in H_0^1(\Omega)$, there apparently hold:
\begin{equation*}\label{stable1}
      \|P_hu\|_{a,\Omega}\lc \|u\|_{a,\Omega}
\quad \textnormal{and} \quad
     \lim_{h\rightarrow 0} \|u-P_hu\|_{a,\Omega}=0.
\end{equation*}
%\begin{equation*}\label{stable1}
%      \|P_hu\|_{a,\Omega}\lc \|u\|_{a,\Omega} \quad\forall u\in
%      H_0^1(\Omega)
%\end{equation*}
%and
%\begin{equation*}\label{stable2}
%     \lim_{h\rightarrow 0} \|u-P_hu\|_{a,\Omega}=0 \quad\forall u\in H_0^1(\Omega).
%\end{equation*}

%From (\ref{stable1}), various global a priori error estimates for
%finite element approximation $P_hu$ to $u$ can be obtained from the
%approximate properties of the finite element space $S_0^h(\Omega)$
%(c.f. \cite{ciarlet-lions-91,xu-zhou-00}).
Now we introduce the following quantity:
\begin{eqnarray*}
\rho_{_{\Omega}}(h)=\sup_{ f\in L^2(\Omega),\|f\|_{0,\Omega}=1}
\inf_{v\in S_0^h(\Omega)}\|L^{-1}f-v\|_{a,\Omega},
\end{eqnarray*}
then $\rho_{_{\Omega}}(h)\to 0$ as $h\to 0$ (see, e.g.,
\cite{babuska-osborn-89,xu-zhou-00}).

A standard finite element scheme for (\ref{variation}) is: Find
 $u_h \in S_0^h(\Omega)$ satisfying
 \begin{eqnarray}\label{dis-fem}
  a(u_h, v) =(f, v)   \qquad \forall  v \in S_0^h(\Omega).
\end{eqnarray}
By definition (\ref{projection}), we know that $u_h=P_hu$.

 By a contradiction
argument, we have (c.f., e.g., \cite{zhou-07})
\begin{lemma}\label{lemma1}
As operators over $H^1_0(\Omega)$, there holds
\begin{eqnarray*}
 \label{operator-property}
\lim_{h\rightarrow 0}\|\mathcal {K}(I-P_h)\|=0
\end{eqnarray*}
if $\mathcal {K}$ is a compact operator over $H^1_0(\Omega)$.
\end{lemma}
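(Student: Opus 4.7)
The plan is to proceed by contradiction, using compactness of $\mathcal{K}$ to trade weak convergence for strong convergence. Suppose the conclusion fails. Then there exist $\varep>0$, a sequence $h_n\to 0$, and functions $v_n\in H^1_0(\Omega)$ with $\|v_n\|_{a,\Omega}=1$ such that
\[
\|\mathcal{K}(I-P_{h_n})v_n\|_{a,\Omega}\ge \varep.
\]
Since the closed unit ball of $H^1_0(\Omega)$ is weakly sequentially compact, after passing to a subsequence (still indexed by $n$) I may assume $v_n\rightharpoonup v$ weakly in $H^1_0(\Omega)$ for some $v\in H^1_0(\Omega)$.

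The heart of the proof is to show that $w_n:=(I-P_{h_n})v_n\rightharpoonup 0$ weakly in $H^1_0(\Omega)$. Once this is done, compactness of $\mathcal{K}$ forces $\mathcal{K}w_n\to 0$ strongly, contradicting $\|\mathcal{K}w_n\|_{a,\Omega}\ge\varep$. To establish the weak convergence, I split
\[
w_n = (I-P_{h_n})v + (I-P_{h_n})(v_n-v).
\]
The first summand tends to $0$ in the energy norm by the pointwise convergence $\lim_{h\to 0}\|u-P_h u\|_{a,\Omega}=0$ recorded earlier. For the second, I use that $P_{h_n}$ is the $a$-orthogonal projection and therefore self-adjoint with respect to $a(\cdot,\cdot)$: for any fixed $\varphi\in H^1_0(\Omega)$,
\[
a\bigl((I-P_{h_n})(v_n-v),\varphi\bigr)= a\bigl(v_n-v,(I-P_{h_n})\varphi\bigr)\le \|v_n-v\|_{a,\Omega}\,\|(I-P_{h_n})\varphi\|_{a,\Omega}\to 0,
\]
because $\|v_n-v\|_{a,\Omega}$ is bounded and $(I-P_{h_n})\varphi\to 0$ strongly for the \emph{fixed} $\varphi$.

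The main technical point is this self-adjointness step, which shifts $(I-P_{h_n})$ off the moving function $v_n-v$ and onto the fixed test function $\varphi$; that is precisely what upgrades pointwise (in $\varphi$) strong convergence of $P_{h_n}$ into weak convergence of the sequence $w_n$. Nothing quantitative about the approximation rate of $P_h$ is required beyond the uniform stability $\|I-P_h\|_{a,\Omega\to a,\Omega}\le 1$ and the pointwise density property of the nested finite element spaces. The remaining pieces--weak compactness in the Hilbert space $H^1_0(\Omega)$ and the weak-to-strong upgrading through the compact operator $\mathcal{K}$--are standard.
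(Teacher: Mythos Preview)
Your proof is correct and follows exactly the contradiction approach the paper indicates; the paper itself does not spell out the argument but merely states ``By a contradiction argument'' and cites \cite{zhou-07}, so you have supplied precisely the details the paper omits. The key step---using $a$-self-adjointness of $I-P_{h_n}$ to transfer the projection error onto a fixed test function, combined with the complete continuity of the compact operator $\mathcal{K}$---is the standard way to run this contradiction, and nothing is missing.
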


\subsection{Adaptive algorithm}
Given an initial triangulation $\mathcal{T}_0$, we shall generate a
sequence of nested conforming triangulations $\mathcal{T}_k$ using
the following loop:
\begin{eqnarray*}\label{loop}
{\bf SOLVE}\rightarrow {\bf ESTIMATE}\rightarrow {\bf
MARK}\rightarrow {\bf REFINE}.
\end{eqnarray*}
 More precisely to get $\mathcal{T}_{k+1}$ from
$\mathcal{T}_k$ we first solve the discrete equation to get $u_k$ on
$\mathcal{T}_k$. The error is estimated using $u_k$ and used to mark
a set of elements that are to be refined. Elements are refined in
such a way that the triangulation is still shape regular and
conforming.
%We shall not discuss  step {\bf SOLVE} which deserves a separate investigation.
We assume that the solutions of finite-dimensional problems can be
solved to any accuracy efficiently.\footnote{By the similar
perturbation argument, indeed, it will be seen that some
approximations to the finite-dimensional problem will be
sufficient.} Examples of such optimal solvers are multigrid method
or multigrid-based preconditioned conjugate gradient method.

Now we review the residual type a posteriori error estimators for
finite element solutions of (\ref{problem}). Let $\mathbb{T}$ denote
the class of all conforming refinements by bisection of
$\mathcal{T}_0$. For $\mathcal{T}_h \in \mathbb{T}$ and any $v\in
S^h_0(\Omega)$ we define the element residual
$\tilde{\mathcal{R}}_{\tau}(v)$ and the jump residual
$\tilde{J}_e(v)$ by
 \begin{eqnarray*}\label{residual}
  \tilde{\mathcal{R}}_{\tau}(v) &:=& f-L v = f +\nabla \cdot (\mathbf{A}\nabla v)\qquad \mbox{in}~ \tau\in
  \mathcal{T}_h,\\
  \tilde{J}_e(v) &:=& -\mathbf{A} \nabla v^{+}\cdot \nu^{+} - \mathbf{A} \nabla v^{-}\cdot
  \nu^{-} := [[\mathbf{A}\nabla v]]_e \cdot \nu_e  ~~~~ \mbox{on}~ e\in \mathcal{E}_h,
\end{eqnarray*}
where $e$ is the  common side of elements $\tau^+$ and $\tau^-$ with
unit outward normals $\nu^+$ and $\nu^-$, respectively, and
$\nu_e=\nu^-$. Let $\omega_e$ be the union of elements which share
the side $e$ and $\omega_\tau$ be the union of elements sharing a
side with $\tau$.

%\begin{remark}
%Note that here f is a general case. In section 3, f is different
%with different $v\in S^h_0(\Omega)$.
%\end{remark}

For $\tau\in \mathcal{T}_h$, we define the local error indicator
$\tilde{\eta}_h(v, \tau)$ by
  \begin{eqnarray*}\label{error-indicator}
   \tilde{\eta}^2_h(v, \tau) := h_\tau^2\|\tilde{\mathcal{R}}_{\tau}(v)\|_{0,\tau}^2
   + \sum_{e\in \mathcal{E}_h,e\subset\partial \tau
   } h_e \|\tilde{J}_e(v)\|_{0,e}^2
  \end{eqnarray*}
and the oscillation $\widetilde{osc}_h(v,\tau)$ by
\begin{eqnarray*}\label{local-oscillation}
\widetilde{osc}^2_h(v,\tau) :=
h_\tau^2\|\tilde{\mathcal{R}}_{\tau}(v)-\overline{\tilde{\mathcal{R}}_{\tau}(v)}\|_{0,\tau}^2
   + \sum_{e\in \mathcal{E}_h,e\subset\partial \tau
   } h_e \|\tilde{J}_e(v)-\overline{\tilde{J}_e(v)}\|_{0,e}^2,
\end{eqnarray*}
where $\overline{w}$ is the $L^2$-projection of $w\in L^2(\Omega)$
to polynomials of some degree  on $\tau$ or $e$.

%Given a subset $\omega
%\subset \Omega$, we define the error
% estimator $\tilde{\eta}_h(v, \omega)$ by
% \begin{eqnarray*}\label{error-estimator}
%  \tilde{\eta}^2_h(v, \omega) := \sum_{\tau\in \mathcal{T}_h, \tau \subset \omega}
%  \tilde{\eta}^2_h(v, \tau)
%  \end{eqnarray*}
%and the oscillation  $\widetilde{osc}_h(v,\omega)$ by
%\begin{eqnarray*}\label{oscilliation}
%  \widetilde{osc}^2_h(v, \omega) := \sum_{\tau\in \mathcal{T}_h, \tau \subset \omega}
%  \widetilde{osc}^2_h(v, \tau).
%\end{eqnarray*}

Given a subset $\omega \subset \Omega$, we define the error
 estimator $\tilde{\eta}_h(v, \omega)$ and the oscillation  $\widetilde{osc}_h(v,\omega)$ by
\begin{eqnarray*}
\tilde{\eta}^2_h(v, \omega) := \sum_{\tau\in \mathcal{T}_h, \tau
\subset \omega}
  \tilde{\eta}^2_h(v, \tau) \quad \textnormal{and} \quad
\widetilde{osc}^2_h(v, \omega) := \sum_{\tau\in \mathcal{T}_h, \tau
\subset \omega}
  \widetilde{osc}^2_h(v, \tau).
\end{eqnarray*}

 For $\tau\in \mathcal{T}_h$, we also need
notation
\begin{eqnarray*}
\eta^2_h(\mathbf{A},\tau)
:=h_\tau^2(\|\mbox{div}\mathbf{A}\|^2_{0,\infty,\tau}+h^{-2}_{\tau}\|\mathbf{A}\|^2_{0,\infty,\omega_{\tau}})
\end{eqnarray*}
and
\begin{eqnarray*}
osc^2_h(\mathbf{A},\tau)
:=h_\tau^2(\|\mbox{div}\mathbf{A}-\overline{\mbox{div}\mathbf{A}}\|^2_{0,\infty,\tau}+
h^{-2}_{\tau}\|\mathbf{A}-\bar{\mathbf{A}}\|^2_{0,\infty,\omega_{\tau}}),
\end{eqnarray*}
where  $\overline{v}$ is the best $L^{\infty}$-approximation in the
space of discontinuous polynomials of some degree.

Given a subset $\omega \subset \Omega$ we finally set
\begin{eqnarray*}
\eta_h(\mathbf{A},\omega):=\max_{\tau\in \mathcal{T}_h, \tau \subset
\omega}\eta_h(\mathbf{A},\tau) \quad \textnormal{and} \quad
osc_h(\mathbf{A},\omega):=\max_{\tau\in \mathcal{T}_h, \tau \subset
\omega}osc_h(\mathbf{A},\tau).
\end{eqnarray*}

We now recall the well-known upper and lower bounds for the energy
error in terms of the residual-type estimator (see, e.g.,
\cite{mekchay-nochetto-05,morin-nochetto-siebert-02,verfurth-96}).
\begin{theorem}
\textnormal{(Global a posterior upper and lower bounds)}. Let $u \in
H^1_0(\Omega)$ be the solution of (\ref{variation}) and $u_h \in
S^h_0(\Omega)$ be the solution of (\ref{dis-fem}). Then there exist
constants $\tilde{C}_1$, $\tilde{C}_2$ and $\tilde{C}_3>0$ depending
only on the shape regularity $\gamma^{\ast}$, $C_a $ and $c_a$ such
that
\begin{eqnarray}\label{boundary-upper}
  \|u - u_h \|^2_{a,\Omega} \leq \tilde{C}_1 \tilde{\eta}^2_h(u_h, \mathcal{T}_h)
\end{eqnarray}
and
\begin{eqnarray}\label{boundary-lower}
~~~~~~ \tilde{C}_2 \tilde{\eta}^2_h (u_h, \mathcal{T}_h)\le
\|u-u_h\|_{a,\Omega}^2+
\tilde{C}_3\widetilde{osc}^2_h(u_h,\mathcal{T}_h).
\end{eqnarray}
\end{theorem}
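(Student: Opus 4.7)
The plan is to establish the upper bound via a residual-equation / quasi-interpolation argument, and the lower bound via the classical bubble-function technique due to Verf\"urth, combined with a scaling argument and the oscillation correction.

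For the upper bound \eqref{boundary-upper}, I would first use the coercivity of $a(\cdot,\cdot)$ together with Galerkin orthogonality \eqref{projection}. Writing $e=u-u_h$, for any $v_h\in S_0^h(\Omega)$ we have $\|e\|_{a,\Omega}^2=a(e,e-v_h)=(f,e-v_h)-a(u_h,e-v_h)$. Choose $v_h=I_h e$, a Scott--Zhang (or Cl\'ement) quasi-interpolant into $S_0^h(\Omega)$, which satisfies the local estimates $\|e-I_h e\|_{0,\tau}\lc h_\tau\|e\|_{1,\omega_\tau}$ and $\|e-I_h e\|_{0,e}\lc h_e^{1/2}\|e\|_{1,\omega_e}$. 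Then perform element-wise integration by parts on $a(u_h,e-v_h)=\sum_\tau (\mathbf{A}\nabla u_h,\nabla(e-v_h))_\tau$, which produces exactly the element residual $\tilde{\mathcal{R}}_\tau(u_h)$ on each element and the jump residual $\tilde{J}_e(u_h)$ on each interior side. Applying Cauchy--Schwarz and the quasi-interpolation bounds, together with finite overlap of the patches $\omega_\tau$ guaranteed by shape regularity, yields \eqref{boundary-upper} with a constant $\tilde C_1$ depending only on $\gamma^{\ast}$, $C_a$, $c_a$.

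For the local lower bound, I would treat the element-residual part and the jump-residual part separately. For $\tau\in\mathcal{T}_h$, pick the interior bubble $b_\tau$ vanishing on $\partial\tau$ and test the residual identity $a(e,v)=(f,v)-a(u_h,v)=\sum_\tau(\tilde{\mathcal{R}}_\tau(u_h),v)_\tau+\sum_e(\tilde{J}_e(u_h),v)_e$ against $v=\overline{\tilde{\mathcal{R}}_\tau(u_h)}\,b_\tau$ extended by zero. Standard inverse and norm-equivalence estimates on the polynomial $\overline{\tilde{\mathcal{R}}_\tau(u_h)}$ give $\|\overline{\tilde{\mathcal{R}}_\tau(u_h)}\|_{0,\tau}^2\lc(\overline{\tilde{\mathcal{R}}_\tau(u_h)},\overline{\tilde{\mathcal{R}}_\tau(u_h)}b_\tau)_\tau$, and boundedness of $a$ together with the scaling $\|\nabla(\overline{\tilde{\mathcal{R}}_\tau(u_h)}b_\tau)\|_{0,\tau}\lc h_\tau^{-1}\|\overline{\tilde{\mathcal{R}}_\tau(u_h)}\|_{0,\tau}$ lead to
\[
h_\tau^2\|\tilde{\mathcal{R}}_\tau(u_h)\|_{0,\tau}^2\lc \|e\|_{a,\tau}^2+h_\tau^2\|\tilde{\mathcal{R}}_\tau(u_h)-\overline{\tilde{\mathcal{R}}_\tau(u_h)}\|_{0,\tau}^2,
\]
after splitting $\tilde{\mathcal{R}}_\tau(u_h)=\overline{\tilde{\mathcal{R}}_\tau(u_h)}+(\tilde{\mathcal{R}}_\tau(u_h)-\overline{\tilde{\mathcal{R}}_\tau(u_h)})$ and absorbing. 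An analogous argument with an edge bubble $b_e$ extended to $\omega_e$, combined with the already-proved element bound on the two elements meeting at $e$, gives the matching estimate for $h_e\|\tilde{J}_e(u_h)\|_{0,e}^2$ modulo oscillation. Summing over all $\tau\in\mathcal{T}_h$ and $e\in\mathcal{E}_h$, with finite overlap, produces \eqref{boundary-lower} with $\tilde C_2,\tilde C_3$ depending only on $\gamma^{\ast}$, $C_a$, $c_a$ and the polynomial degree.

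The main obstacle I expect is the jump-residual part of the lower bound: one has to construct an admissible edge bubble extension that is in $H^1_0(\Omega)$, control its $H^1$-norm on both elements adjoining $e$ via the correct powers of $h_e$, and then cleanly separate the resulting cross term into an element-residual contribution (which has already been estimated) and the genuine jump contribution. The element-residual case and the upper bound are essentially mechanical once the right test function is identified; the bookkeeping of the oscillation terms so that the data $f$ and the coefficient $\mathbf A$ enter only through $\widetilde{osc}_h(u_h,\mathcal{T}_h)$ is the other point requiring care.
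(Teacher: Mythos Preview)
Your proposal is correct and follows the classical residual-estimator argument (quasi-interpolation for the upper bound, Verf\"urth bubble functions for the lower bound). The paper does not give its own proof of this theorem but simply recalls it from \cite{mekchay-nochetto-05,morin-nochetto-siebert-02,verfurth-96}, and your sketch is precisely the standard argument found in those references.
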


%For solving (\ref{dis-fem}), the adaptive algorithms in
%\cite{dorfler-96}, \cite{mekchay-nochetto-05} and
%\cite{morin-nochetto-siebert-02} introduce two marking strategies to
%enforce error reduction and oscillation reduction, respectively.
%Under the two marking strategies, both convergence and optimal
%complexity of the adaptive finite element algorithm are obtained
%(see \cite{dorfler-96,mekchay-nochetto-05,
%morin-nochetto-siebert-02,stevenson-06a}). In practice, however, the
%oscillation marking is not necessary, which was proved  by Cascon et
%al. \cite{cascon-kreuzer-nochetto-siebert-07}.

We replace the subscript $h$ by an iteration counter called $k$ and
call the adaptive algorithm without oscillation marking as {\bf
Algorithm $D_0$}, which is defined as follows:

 Choose a parameter $0 < \theta <1:$
\begin{enumerate}
\item Pick  any initial mesh $\mathcal{T}_0$, and let $k=0$.
\item
Solve the system on $\mathcal{T}_0$ for the discrete solution $u_0$.
\item Compute the local indicators ${\tilde \eta}_k$.
\item
Construct $\mathcal{M}_k \subset
 \mathcal{T}_k$ by {\bf Marking Strategy $E_0$ } and parameter
 $\theta$.
 \item Refine $\mathcal{T}_k$ to get a new conforming mesh $\mathcal{T}_{k+1}$ by Procedure {\bf REFINE}.
 \item Solve the system on $\mathcal{T}_{k+1}$ for the discrete
 solution $u_{k+1}$.
\item Let $k=k+1$ and go to Step 3.
\end{enumerate}

The marking strategy, which we call {\bf Marking Strategy $E_0$}, is
crucial for our adaptive methods. Now it can be stated by:

 Given a parameter $0<\theta < 1$ :
\begin{enumerate}
 \item Construct a minimal subset $\mathcal{M}_k$ of
 $\mathcal{T}_k$ by selecting some elements
 in $\mathcal{T}_k$ such that
\begin{eqnarray*}
\tilde{\eta}_k(u_k, \mathcal{M}_k)  \geq \theta
 \tilde{\eta}_k(u_k,\mathcal{T}_k).
 \end{eqnarray*}
 \item Mark all the elements in $\mathcal{M}_k$.
 \end{enumerate}

Due to \cite{cascon-kreuzer-nochetto-siebert-07}, the procedure {\bf
REFINE} here is not required to satisfy the Interior Node Property
of \cite{mekchay-nochetto-05,morin-nochetto-siebert-02}.
%which is some iterative or recursive bisection (see, e.g., \cite{maubach-95,
%traxler-97}) of elements with the minimal refinement condition that
%marked elements are bisected at least once.

Given a fixed number $b\geq 1$, for any $\mathcal{T}_k \in
\mathbb{T}$ and a subset $\mathcal{M}_k\subset \mathcal{T}_k$ of
marked elements,
\begin{eqnarray*}
\mathcal{T}_{k+1}={\bf REFINE}(\mathcal{T}_k,\mathcal{M}_k)
\end{eqnarray*}
outputs a conforming triangulation $\mathcal{T}_{k+1}\in
\mathbb{T}$, where at least all elements of $\mathcal{M}_k$ are
bisected $b$ times. We define $R_{\mathcal{T}_k\rightarrow
\mathcal{T}_{k+1}}:=\mathcal{T}_k\backslash(\mathcal{T}_k\cap
\mathcal{T}_{k+1})$ as the set of refined elements, thus
$\mathcal{M}_k\subset R_{\mathcal{T}_k\rightarrow
\mathcal{T}_{k+1}}$.
\begin{lemma}
\textnormal{(Complexity of Refine)}. Assume that $\mathcal{T}_0$
verifies condition (b) of section 4 in \cite{Stevenson-08}. For
$k\geq 0$ let $\{\mathcal{T}_k\}_{k\geq 0}$ be any sequence of
refinements of $\mathcal{T}_0$ where $\mathcal{T}_{k+1}$ is
generated from $\mathcal{T}_{k}$ by $\mathcal{T}_{k+1}={\bf
REFINE}(\mathcal{T}_{k},\mathcal{M}_k)$ with a subset
$\mathcal{M}_k\subset \mathcal{T}_k$. Then
\begin{eqnarray}\label{complexity}
\#\mathcal{T}_k-\#\mathcal{T}_0 \lc \sum_{j=0}^{k-1}\#\mathcal{M}_j
\quad \forall k\geq 1
\end{eqnarray}
is valid, where the hidden constant depends on $\mathcal{T}_0$ and
b.
\end{lemma}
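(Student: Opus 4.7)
My plan is to follow the chain-of-bisections argument of Binev--Dahmen--DeVore (for $d=2$) and Stevenson (for $d\ge 2$), since the hypothesis imposed on $\mathcal{T}_0$ is precisely the labeling condition that makes their framework applicable. The global strategy is to show that, up to a constant depending only on $\mathcal{T}_0$ and $b$, every newly created element in $\mathcal{T}_k$ can be ``charged'' to some marked element from a preceding step, so that $\#\mathcal{T}_k-\#\mathcal{T}_0$ never outruns $\sum_{j<k}\#\mathcal{M}_j$.

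Concretely I would proceed in three steps. First, I organize the family $\bigcup_{k\ge 0}\mathcal{T}_k$ into a forest whose roots are the elements of $\mathcal{T}_0$ and whose children of a node $\tau$ are the (at most two) elements produced by bisecting $\tau$; this lets me speak about \emph{generation} and ancestor/descendant relations. Second, I split $\mathcal{T}_k\setminus\mathcal{T}_0$ into the elements obtained by the direct $b$-fold bisection of some $\tau\in\mathcal{M}_j$ with $j<k$ and those obtained from the conformity completion used to eliminate the hanging nodes that these direct bisections create. The direct contribution is trivially bounded by $2^{bd}\sum_{j<k}\#\mathcal{M}_j$, so the real work is to bound the completion contribution.

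Third, and this is the main obstacle, I invoke the labeling assumption on $\mathcal{T}_0$ to prove the non-propagation lemma of \cite{Stevenson-08}: the completion triggered by bisecting a single element of generation $g$ affects only elements whose generation-distance from the triggering element is finite, and the number of affected elements decays geometrically in that distance. Without condition (b) of \cite{Stevenson-08} this is false, since the conformity closure can in principle propagate arbitrarily far through the mesh, breaking the desired linear bound; once the labeling is in place, however, one can assign to each completion-bisection a chain of ancestor elements that terminates at a marked element in such a way that no marked element is charged more than a bounded number of times.

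Summing the resulting geometric series over chain lengths across all refinement steps $j<k$ yields \eqref{complexity}, with a constant depending only on $\mathcal{T}_0$ and $b$. Since the detailed combinatorial bookkeeping has been carried out in \cite{binev-dehmen-devore-04,Stevenson-08}, my final writeup would set up the forest structure in the present notation and then import their conclusion verbatim rather than re-derive it.
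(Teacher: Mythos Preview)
The paper does not supply its own proof of this lemma: it is stated as a background result and attributed to \cite{Stevenson-08} (with the two-dimensional antecedent in \cite{binev-dehmen-devore-04}). Your proposal is precisely a sketch of that standard argument and ends by importing the conclusion from those references, so it coincides with what the paper does. One small slip: a single bisection produces two children regardless of the ambient dimension, so $b$ rounds of bisection of one marked element yield at most $2^{b}$ descendants, not $2^{bd}$; this does not affect the argument since any constant depending on $b$ suffices.
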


The convergence  of {\bf Algorithm $D_0$} is shown in
\cite{cascon-kreuzer-nochetto-siebert-07}.
%which is an improvement of that in \cite{dorfler-96,mekchay-nochetto-05,
%morin-nochetto-siebert-02,stevenson-06a}.
\begin{theorem}\label{convergence-boundary}
 Let $\{u_k\}_{k\in \mathbb{N}_0}$ be a sequence of finite element
 solutions corresponding to a sequence of nested finite element
 spaces $\{S^k_0(\Omega)\}_{k\in \mathbb{N}_0}$ produced by {\bf Algorithm $D_0$}.
Then there exist constants $\tilde{\gamma}>0$ and  $\tilde{\xi}\in
(0,1)$ depending only on the shape regularity of meshes, the data
and the marking parameter $\theta$, such that for any two
consecutive iterates we have
\begin{eqnarray*}\label{convergence-neq}
&&\|u-u_{k+1}\|^2_{a,\Omega} + \tilde{\gamma}
\tilde{\eta}^2_{k+1}(u_{k+1},\mathcal{T}_{k+1}) \nonumber \\
 &\leq & \tilde{\xi}^2\big( \|u-u_k\|^2_{a,\Omega} +
\tilde{\gamma} \tilde{\eta}^2_k(u_k, \mathcal{T}_k)\big).
\end{eqnarray*}
Indeed,  constant ${\tilde\gamma}$ has the following form
\begin{eqnarray}\label{gamma-boundary}
\tilde{\gamma} := \frac{1}{(1 + \delta^{-1})\Lambda_1
\eta^2_0(\mathbf{A},\mathcal{T}_0)},
\end{eqnarray}
where
$\eta^2_0(\mathbf{A},\mathcal{T}_0):=\eta^2_{\mathcal{T}_0}(\mathbf{A},\mathcal{T}_0)$,
 $\Lambda_1 :=(d+1)C_0^2/c_a$ with $C_0$ some positive constant and constant $\delta \in (0,1)$.
\end{theorem}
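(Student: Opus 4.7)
The plan is to adapt the contraction-argument of Cascon--Kreuzer--Nochetto--Siebert by combining three classical ingredients: Galerkin (Pythagorean) orthogonality, an estimator perturbation/reduction estimate, and D\"orfler-type marking, and finally interpolating with the upper bound (\ref{boundary-upper}) to produce a contraction in a quasi-error of the form $\|u-u_k\|^2_{a,\Omega}+\tilde\gamma\tilde\eta_k^2$.

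First I would invoke Galerkin orthogonality, which, since $S_0^k(\Omega)\subset S_0^{k+1}(\Omega)$ and $a(\cdot,\cdot)$ is symmetric, yields
\begin{equation*}
\|u-u_{k+1}\|_{a,\Omega}^{2}=\|u-u_k\|_{a,\Omega}^{2}-\|u_{k+1}-u_k\|_{a,\Omega}^{2}.
\end{equation*}
Next I would prove an estimator reduction of the form
\begin{equation*}
\tilde\eta_{k+1}^{2}(u_{k+1},\mathcal{T}_{k+1})\le(1+\delta)\bigl\{\tilde\eta_k^{2}(u_k,\mathcal{T}_k)-\lambda_{\ast}\,\tilde\eta_k^{2}(u_k,\mathcal{M}_k)\bigr\}+(1+\delta^{-1})\Lambda_1\,\eta_0^{2}(\mathbf{A},\mathcal{T}_0)\,\|u_{k+1}-u_k\|_{a,\Omega}^{2},
\end{equation*}
for arbitrary $\delta\in(0,1)$ and a fixed reduction factor $\lambda_{\ast}=1-2^{-b/d}\in(0,1)$. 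This is the technical heart of the argument: one splits the estimator on $\mathcal{T}_{k+1}$ into contributions from refined and unrefined elements, applies Young's inequality with weight $\delta$ to separate $u_{k+1}$ from $u_k$, uses the trace/inverse inequalities on stars together with the Lipschitz dependence of the residual on the coefficients $\mathbf{A}$ (this is where the factor $\Lambda_1=(d+1)C_0^2/c_a$ and the shape-regularity constant $C_0$ appear), and exploits the fact that at least one bisection halves the side $h_\tau$ on each marked element, yielding the contraction $\lambda_{\ast}$ on $\tilde\eta_k^2(u_k,\mathcal{M}_k)$.

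Multiplying this estimator reduction by $\tilde\gamma=[(1+\delta^{-1})\Lambda_1\eta_0^{2}(\mathbf{A},\mathcal{T}_0)]^{-1}$ and adding to the orthogonality identity makes the $\|u_{k+1}-u_k\|_{a,\Omega}^2$ terms cancel, leaving
\begin{equation*}
\|u-u_{k+1}\|_{a,\Omega}^{2}+\tilde\gamma\,\tilde\eta_{k+1}^{2}\le\|u-u_k\|_{a,\Omega}^{2}+\tilde\gamma(1+\delta)\bigl\{\tilde\eta_k^{2}-\lambda_{\ast}\tilde\eta_k^{2}(u_k,\mathcal{M}_k)\bigr\}.
\end{equation*}
Invoking {\bf Marking Strategy $E_0$} to replace $\tilde\eta_k^2(u_k,\mathcal{M}_k)$ by $\theta^{2}\tilde\eta_k^{2}$ and splitting the remaining $\tilde\eta_k^2$ into two pieces---one absorbed via the global upper bound $\|u-u_k\|_{a,\Omega}^2\le\tilde C_1\tilde\eta_k^2$, the other left on the estimator side---I would then choose $\delta$ small enough that $(1+\delta)(1-\beta\lambda_{\ast}\theta^{2})<1$ for some $\beta\in(0,1)$, and tune the splitting parameter so that the coefficients of $\|u-u_k\|_{a,\Omega}^2$ and $\tilde\gamma\tilde\eta_k^2$ become equal to a common $\tilde\xi^2<1$, which gives exactly the stated contraction with $\tilde\gamma$ as in (\ref{gamma-boundary}).

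The main obstacle is the estimator reduction step: controlling the jump residuals on the new mesh in terms of those on the old mesh requires careful local scaling and Lipschitz bounds on $\mathbf{A}$, and all constants entering $\Lambda_1$ and $\tilde\gamma$ must be tracked so that the final linear combination with $\tilde\gamma$ exactly eliminates the $\|u_{k+1}-u_k\|_{a,\Omega}^2$ cross term. Once that bookkeeping is done, the remaining combinatorics with D\"orfler marking and the upper bound is routine.
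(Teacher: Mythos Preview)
The paper does not supply its own proof of this theorem; it is quoted verbatim from Cascon--Kreuzer--Nochetto--Siebert \cite{cascon-kreuzer-nochetto-siebert-07}, and your proposal is a faithful outline of exactly that CKNS contraction argument (Pythagoras, estimator reduction with the $(1+\delta^{-1})\Lambda_1\eta_0^2(\mathbf{A},\mathcal{T}_0)$ perturbation term, D\"orfler marking, and absorption via the upper bound). So your approach is correct and coincides with what the paper invokes.
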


Following \cite{cascon-kreuzer-nochetto-siebert-07,Dai-Xu-Zhou-08},
we have a link between nonlinear approximation theory and the {\bf
AFEM} through the marking strategy as follows.
\begin{lemma}
\label{complexity-general-optimal-marking}\textnormal{(Optimal
Marking).} Let $u_k \in S^k_0(\Omega)$ and $u_{k+1} \in
S^{k+1}_0(\Omega)$ be finite element solutions of (\ref{variation})
over a conforming mesh $\mathcal{T}_k$ and its refinement
$\mathcal{T}_{k+1}$ with marked element $\mathcal{M}_k$. Suppose
that they satisfy the decrease property
\begin{eqnarray*}
  &&\|u - u_{k+1} \|_{a,\Omega}^2 + \tilde{\gamma_{\ast}}
  \widetilde{osc}^2_{k+1}(u_{k+1},\mathcal{T}_{k+1}) \nonumber\\
  & \leq&   \tilde{\beta_{\ast}}^2 \big(\|u-u_k\|_{a,\Omega}^2
  + \tilde{\gamma_{\ast}}\widetilde{osc}^2_k(u_k,\mathcal{T}_k)\big)
 \end{eqnarray*}
with constants $\tilde{\gamma_{\ast}}>0$  and
$\tilde{\beta_{\ast}}\in (0,\sqrt{\frac{1}{2}})$. Then the set
$\mathcal{R}:=R_{\mathcal{T}_k\rightarrow \mathcal{T}_{k+1}}$
satisfies the following inequality
\begin{eqnarray*}\label{complexity-general-optimal-marking-neq1}
\tilde{\eta}_k(u_k,\mathcal {R})\geq
\hat{\theta}\tilde{\eta}_k(u_k,\mathcal{T}_k)
\end{eqnarray*}
with $\hat{\theta}^2 =
\frac{\tilde{C}_2(1-2\tilde{\beta_{\ast}}^2)}{\tilde{C}_0 (
\tilde{C}_1 + (1 +  2 C \tilde{C}_1) \tilde{\gamma_{\ast}})}$ ,
where $C=\Lambda_1 osc^2_0(\mathbf{A},\mathcal{T}_0)$
\textnormal{and} $\tilde{C}_0 = \max(1,
\frac{\tilde{C}_3}{\tilde{\gamma_{\ast}}})$.
\end{lemma}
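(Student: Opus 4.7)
The plan is to invert the contraction hypothesis into a D\"orfler-type marking condition on $\mathcal{R}$, following the strategy of \cite{cascon-kreuzer-nochetto-siebert-07,Dai-Xu-Zhou-08}. First, apply the triangle inequality
\[
\|u-u_k\|^2_{a,\Omega} \le 2\|u-u_{k+1}\|^2_{a,\Omega} + 2\|u_{k+1}-u_k\|^2_{a,\Omega},
\]
substitute the hypothesis into the $\|u-u_{k+1}\|^2_{a,\Omega}$ term, and use the assumption $\tilde{\beta_{\ast}}<\sqrt{1/2}$ (which is precisely what keeps the resulting coefficient positive) to obtain
\[
(1-2\tilde{\beta_{\ast}}^2)\|u-u_k\|^2_{a,\Omega} \le 2\|u_{k+1}-u_k\|^2_{a,\Omega} + 2\tilde{\beta_{\ast}}^2\tilde{\gamma_{\ast}}\widetilde{osc}_k^2(u_k,\mathcal{T}_k).
\]
Adding $(1-2\tilde{\beta_{\ast}}^2)\tilde{\gamma_{\ast}}\widetilde{osc}_k^2$ to both sides and invoking the global lower bound (\ref{boundary-lower}) in the unified form $\tilde{C}_2\tilde{\eta}_k^2(u_k,\mathcal{T}_k) \le \tilde{C}_0\bigl(\|u-u_k\|^2_{a,\Omega}+\tilde{\gamma_{\ast}}\widetilde{osc}_k^2\bigr)$ then yields the intermediate
\[
(1-2\tilde{\beta_{\ast}}^2)\tilde{C}_2\,\tilde{\eta}_k^2(u_k,\mathcal{T}_k) \le \tilde{C}_0\bigl(2\|u_{k+1}-u_k\|^2_{a,\Omega} + \tilde{\gamma_{\ast}}\widetilde{osc}_k^2(u_k,\mathcal{T}_k)\bigr).
\]

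The next task is to dominate both right-hand terms by $\tilde{\eta}_k^2(u_k,\mathcal{R})$. The central technical ingredient is a discrete localized upper bound
\[
\|u_{k+1}-u_k\|^2_{a,\Omega} \le \tilde{C}_1\,\tilde{\eta}_k^2(u_k,\mathcal{R}),
\]
which I would prove by setting $e_k:=u_{k+1}-u_k\in S_0^{k+1}(\Omega)$, using Galerkin orthogonality to write $a(e_k,e_k)=a(u-u_k,e_k-\Pi_k e_k)$ for a Scott--Zhang-type quasi-interpolant $\Pi_k e_k\in S_0^k(\Omega)$ whose error $e_k-\Pi_k e_k$ is supported in a patch around $\mathcal{R}$, integrating by parts to expose the element and jump residuals of $u_k$, and applying Cauchy--Schwarz with the usual local approximation estimates. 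Simultaneously, for the oscillation I would split $\widetilde{osc}_k^2(u_k,\mathcal{T}_k)=\widetilde{osc}_k^2(u_k,\mathcal{R})+\widetilde{osc}_k^2(u_k,\mathcal{T}_k\setminus\mathcal{R})$, dominate the first piece by $\tilde{\eta}_k^2(u_k,\mathcal{R})$, and handle the unrefined part through the Lipschitz perturbation
\[
\widetilde{osc}_k^2(u_k,\tau) \le 2\widetilde{osc}_{k+1}^2(u_{k+1},\tau) + 2C\,\|u_k-u_{k+1}\|_{a,\omega_\tau}^2,\quad \tau\in\mathcal{T}_k\cap\mathcal{T}_{k+1},
\]
with $C=\Lambda_1\,osc_0^2(\mathbf{A},\mathcal{T}_0)$, since the residual-type estimator is Lipschitz in its discrete argument with constant controlled by the coefficient oscillation on $\mathcal{T}_0$. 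The resulting $\widetilde{osc}_{k+1}^2(u_{k+1},\mathcal{T}_{k+1})$ is then reabsorbed using the contraction hypothesis itself.

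Assembling all of these estimates, the two right-hand terms of the intermediate inequality become bounded by $\tilde{C}_0\bigl(\tilde{C}_1+(1+2C\tilde{C}_1)\tilde{\gamma_{\ast}}\bigr)\tilde{\eta}_k^2(u_k,\mathcal{R})$ after absorption, in which $\tilde{C}_1$ records the discrete localized upper bound and $(1+2C\tilde{C}_1)\tilde{\gamma_{\ast}}$ combines the direct oscillation contribution on $\mathcal{R}$ with the Lipschitz coupling on $\mathcal{T}_k\setminus\mathcal{R}$. Dividing through then yields exactly the announced constant $\hat\theta$. The main obstacle will be the discrete localized upper bound: constructing a quasi-interpolant that annihilates polynomial residuals on elements of $\mathcal{T}_k$ disjoint from $\mathcal{R}$ while retaining the standard local $H^1$-stability and approximation estimates on refined and bordering elements, with overlapping patches carefully managed to pin down the constant $\tilde{C}_1$, is the technical heart of the argument.
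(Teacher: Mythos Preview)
The paper does not supply its own proof of this lemma; it is quoted from \cite{cascon-kreuzer-nochetto-siebert-07,Dai-Xu-Zhou-08}, and your outline follows that same blueprint (localized discrete upper bound, oscillation Lipschitz perturbation on unrefined elements, global lower bound). The strategy is correct.

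However, two of your steps will not reproduce the \emph{exact} constant $\hat\theta$ in the statement. First, since the lemma concerns the symmetric problem (\ref{variation}), you have the Pythagoras identity $\|u-u_k\|_{a,\Omega}^2=\|u-u_{k+1}\|_{a,\Omega}^2+\|u_{k+1}-u_k\|_{a,\Omega}^2$ exactly; using it (together with $\|u-u_{k+1}\|_{a,\Omega}^2\ge 0$) gives $\|u-u_k\|_{a,\Omega}^2-2\|u-u_{k+1}\|_{a,\Omega}^2\le \|u_{k+1}-u_k\|_{a,\Omega}^2$, so only a single factor of $\tilde C_1$ enters. Your triangle-inequality route produces $2\|u_{k+1}-u_k\|_{a,\Omega}^2$ and hence $2\tilde C_1$ in the denominator. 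Second, do not drop the $-\tilde\gamma_{\ast}\widetilde{osc}_{k+1}^2$ term from the hypothesis and then try to ``reabsorb'' $\widetilde{osc}_{k+1}^2$ later via the hypothesis: that reintroduces $\|u-u_k\|_{a,\Omega}^2$ on the right and spoils the constant. The clean argument subtracts twice the hypothesis to obtain
\[
(1-2\tilde\beta_{\ast}^2)\bigl(\|u-u_k\|_{a,\Omega}^2+\tilde\gamma_{\ast}\widetilde{osc}_k^2\bigr)
\le \|u_{k+1}-u_k\|_{a,\Omega}^2+\tilde\gamma_{\ast}\bigl(\widetilde{osc}_k^2-2\,\widetilde{osc}_{k+1}^2\bigr),
\]
after which the $+2\,\widetilde{osc}_{k+1}^2$ arising from your Lipschitz perturbation on $\mathcal{T}_k\setminus\mathcal{R}$ cancels \emph{exactly} against the $-2\,\widetilde{osc}_{k+1}^2$ already present. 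What remains is $(1+2C\tilde\gamma_{\ast})\|u_{k+1}-u_k\|_{a,\Omega}^2+\tilde\gamma_{\ast}\widetilde{osc}_k^2(u_k,\mathcal{R})$, and bounding each piece by $\tilde\eta_k^2(u_k,\mathcal{R})$ via the discrete upper bound and $\widetilde{osc}\le\tilde\eta$ yields precisely $\tilde C_1+(1+2C\tilde C_1)\tilde\gamma_{\ast}$, matching the stated $\hat\theta$.
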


\section{A general framework}
%Consider a model problem:
%\begin{equation}\Label{Gproblem}
%\left\{\begin{array}{rl}
%-\nabla \cdot (\mathbf{A}\nabla u)+Vu&=\ell u \,\,\,\mbox{in} \quad \Omega,\\
%u &= 0 \,\,\,\,\mbox{on}~~\partial\Omega,
%\end{array}\right.
%\end{equation}
Let $u\in H^1_0(\Omega)$ satisfy
\begin{eqnarray}\label{Gvariation}
a(u, v)+(Vu,v)= (\ell u, v) \qquad\forall v\in H^1_0(\Omega),
\end{eqnarray}
where $\ell:H^1_0(\Omega)\rightarrow  L^{2}(\Omega)$ is a bounded
operator and $V:H^1_0(\Omega)\rightarrow  L^{2}(\Omega)$ is a linear
bounded operator.

Let $K: L^2(\Omega)\rightarrow H^1_0(\Omega)$  be the operator
defined by
\begin{eqnarray*}\label{def-operator-k}
 a(Kw, v) = (w, v)~~~~\forall w, v \in L^2(\Omega).
\end{eqnarray*}
Then $K$ is a compact operator and (\ref{Gvariation}) becomes as
\begin{eqnarray*}\label{operator-K-1}
  u+KVu=K\ell u.
\end{eqnarray*}

Let  $u_h\in S^h_0(\Omega)$ be a solution of disctetization
 \begin{eqnarray}\label{Gdis-fem}
  a(u_h, v)+(Vu_h,v) =(\ell_{h}u_h, v)   ~~~~ \forall  v \in
  S^h_0(\Omega),
\end{eqnarray}
where $\ell_{h}:S^h_0(\Omega)\rightarrow  L^{2}(\Omega)$ is some
bounded operator. Note that we may view $\ell_{h}$ as a perturbation
to $\ell$, for which we assume that there exists $\kappa_1(h)\in
(0,1)$ such that
\begin{eqnarray}\label{assume}
\|K(\ell u- \ell_h u_h)\|_{a,\Omega}=\mathcal
 {O}(\kappa_1(h))\|u-u_h\|_{a,\Omega},
\end{eqnarray}
where $\kappa_1(h)\rightarrow 0$ as $h\rightarrow 0$.

Note  that (\ref{Gdis-fem}) can be written as
\begin{eqnarray*}\label{operator-K-2}
u_h+P_h KVu_h=P_hK\ell_h u_h,
\end{eqnarray*}
where $P_h$ is defined by (\ref{projection}). We have for
$w^h=K\ell_{h}u_h-KVu_h$ that
\begin{eqnarray}\label{u-w}
u_h=P_hw^h.
\end{eqnarray}

\begin{theorem}\label{thm-general-boundary}
 There exists $\kappa(h)\in (0,1)$ such that $\kappa(h)\rightarrow 0$ as $h\rightarrow
 0$ and
\begin{eqnarray}\label{general-boundary-neq}
 \|u-u_h\|_{a,\Omega}= \|w^h - P_h w^h\|_{a,\Omega} +\mathcal
 {O}(\kappa(h))\|u-u_h\|_{a,\Omega}.
\end{eqnarray}
\end{theorem}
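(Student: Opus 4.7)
The plan is to exploit $u_h = P_h w^h$ from (\ref{u-w}), which identifies $\|w^h - P_h w^h\|_{a,\Omega}$ with $\|w^h - u_h\|_{a,\Omega}$, and then to control the gap between the true solution $u$ and the auxiliary function $w^h$. From the algebraic identity $u - u_h = (u - w^h) + (w^h - u_h)$ and the reverse triangle inequality I immediately get
\begin{equation*}
\bigl| \|u - u_h\|_{a,\Omega} - \|w^h - P_h w^h\|_{a,\Omega} \bigr| \leq \|u - w^h\|_{a,\Omega},
\end{equation*}
so (\ref{general-boundary-neq}) reduces to producing some $\kappa(h) \to 0$ with $\|u - w^h\|_{a,\Omega} = \mathcal{O}(\kappa(h)) \|u - u_h\|_{a,\Omega}$.

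To expose a workable expression for $u - w^h$ I combine the continuous identity $u = K\ell u - KVu$ (equivalent to (\ref{Gvariation}) via $K$) with the definition $w^h = K\ell_h u_h - KV u_h$ to obtain
\begin{equation*}
u - w^h = K(\ell u - \ell_h u_h) - KV(u - u_h).
\end{equation*}
The first summand is bounded by $C\kappa_1(h)\|u - u_h\|_{a,\Omega}$ using the perturbation hypothesis (\ref{assume}), so the whole proof hinges on the second summand.

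For that second summand I would use that $V : H^1_0(\Omega) \to L^2(\Omega)$ is bounded and $K : L^2(\Omega) \to H^1_0(\Omega)$ is compact (elliptic regularity followed by Rellich--Kondrachov), so $KV$ is a compact operator on $H^1_0(\Omega)$ and Lemma \ref{lemma1} gives $\|KV(I-P_h)\| \to 0$. To convert this operator-norm decay into a bound on $\|KV(u-u_h)\|_{a,\Omega}$, I plan to combine the smoothing inequality $\|Kg\|_{a,\Omega} \lc \|g\|_{0,\Omega}$ (which follows from $\|Kg\|_{a,\Omega}^2=(g,Kg)$ together with Poincar\'e) with an Aubin--Nitsche-type dual estimate $\|u-u_h\|_{0,\Omega} \lc \rho_{_{\Omega}}(h) \|u-u_h\|_{a,\Omega}$ for the perturbed problem, yielding $\|KV(u-u_h)\|_{a,\Omega} \lc \rho_{_{\Omega}}(h)\|u-u_h\|_{a,\Omega}$. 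Setting $\kappa(h) := \max\{\kappa_1(h), \rho_{_{\Omega}}(h)\}$ and choosing $h$ small enough that $\kappa(h) \in (0,1)$ then delivers (\ref{general-boundary-neq}).

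The main obstacle is precisely this second-summand estimate. Applying Lemma \ref{lemma1} directly to $u-u_h$ is wasteful, because $u-u_h$ contains a Galerkin-type component in $S^h_0(\Omega)$ on which the operator-norm bound $\|KV(I-P_h)\|$ gives no information. Passing through $L^2$-duality cures this defect, but it requires an Aubin--Nitsche estimate for the nonstandard discretization (\ref{Gdis-fem}) in which $\ell$ has been replaced by $\ell_h$ and the bilinear form carries the extra $V$-term; executing that duality argument cleanly, with constants independent of $h$, is the delicate step of the proof.
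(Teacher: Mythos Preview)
Your reduction to bounding $\|u-w^h\|_{a,\Omega}$ via $u_h=P_hw^h$ and the splitting $u-w^h=K(\ell u-\ell_hu_h)-KV(u-u_h)$ is exactly the paper's route; assumption (\ref{assume}) dispatches the first piece in both. For the $KV$-piece the paper is far more direct than you: it simply asserts the identity $KV(u_h-u)=KV(I-P_h)(u_h-u)$ and applies Lemma~\ref{lemma1} with $\kappa_2(h):=\|KV(I-P_h)\|$, setting $\kappa(h)=\kappa_1(h)+\kappa_2(h)$. No $L^2$-duality appears. Your worry about this step is in fact well placed: that identity is equivalent to $P_h(u_h-u)=0$, i.e.\ $u_h=P_hu$, which the perturbed scheme (\ref{Gdis-fem}) does not guarantee. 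So the paper's argument, read literally, glosses over the very difficulty you flagged.

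Your proposed Aubin--Nitsche detour, however, does not close under the stated hypotheses either. To pass from $\|Kg\|_{a,\Omega}\lc\|g\|_{0,\Omega}$ and $\|u-u_h\|_{0,\Omega}\lc\rho_{_{\Omega}}(h)\|u-u_h\|_{a,\Omega}$ to the conclusion $\|KV(u-u_h)\|_{a,\Omega}\lc\rho_{_{\Omega}}(h)\|u-u_h\|_{a,\Omega}$ you implicitly need $\|V(u-u_h)\|_{0,\Omega}\lc\|u-u_h\|_{0,\Omega}$; but the framework only assumes $V:H^1_0(\Omega)\to L^2(\Omega)$ is bounded, and in the nonsymmetric application $Vw=\mathbf{b}\cdot\nabla w+cw$ carries a gradient, so no $L^2\!\to\!L^2$ control is available and the small factor never materializes. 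A repair that stays within the paper's hypotheses is to substitute $u-u_h=(u-w^h)+(w^h-u_h)$ inside $KV(u-u_h)$, giving $(I+KV)(u-w^h)=K(\ell u-\ell_hu_h)-KV(w^h-u_h)$; since $w^h-u_h=(I-P_h)w^h$ lies in the range of $I-P_h$, Lemma~\ref{lemma1} now legitimately yields $\|KV(w^h-u_h)\|_{a,\Omega}\le\kappa_2(h)\|w^h-u_h\|_{a,\Omega}\le\kappa_2(h)\big(\|u-w^h\|_{a,\Omega}+\|u-u_h\|_{a,\Omega}\big)$, and invertibility of $I+KV$ (Fredholm, $KV$ compact) together with absorption of the $\kappa_2(h)\|u-w^h\|_{a,\Omega}$ term finishes the estimate.
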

\begin{proof}
 By definition, we have
\begin{eqnarray*}
 u-w^h=K\ell u - K Vu - (K \ell_h u_h - KVu_h) = K(\ell u-\ell_h u_h) +
 KV(u_h-u).
\end{eqnarray*}

Let $\kappa_2(h)=\|KV(I-P_h)\|.$  Since $KV :
H^1_0(\Omega)\rightarrow H^1_0(\Omega)$ is compact, we get from
Lemma \ref{lemma1} that
 $\kappa_2(h)\rightarrow 0$ as $h\rightarrow 0$. Note that
\begin{eqnarray*}
KV(u_h-u)=KV(I-P_h)(u_h-u),
\end{eqnarray*}
we obtain
 \begin{eqnarray}\label{KV-property}
 \|KV(u_h-u)\|_{a,\Omega}=\mathcal
 {O}(\kappa_2(h))\|u-u_h\|_{a,\Omega}.
\end{eqnarray}

Set $\kappa(h)=\kappa_1(h)+\kappa_2(h)$, we
 have that $\kappa(h)\rightarrow 0$ as $h\rightarrow 0$ and
\begin{eqnarray}\label{thm1-neq1}
\|u-w^h\|_{a,\Omega}\leq \tilde{C}\kappa(h)\|u-u_h\|_{a,\Omega}.
\end{eqnarray}
Since (\ref{u-w}) implies
\begin{eqnarray*}
 u - u_h = w^h-P_h w^h +u-w^h,
 \end{eqnarray*}
we get (\ref{general-boundary-neq}) from (\ref{thm1-neq1}). This
completes the proof.
\end{proof}

Theorem \ref{thm-general-boundary} sets up a relationship between
the error estimates of finite element approximations of the general
problem and the associated typical finite element boundary value
solutions, from which various a posteriori error estimators for the
general problem can be easily obtained since the a posteriori error
estimators for the typical boundary value problem have been
well-constructed. In fact, Theorem \ref{thm-general-boundary}
implies that up to the high order term, the error of the general
problem is equivalent to that of the typical problem with $\ell_h
u_h-Vu_h$ as a source
 term. However, the high order term can not be estimated
easily in the analysis of convergence and optimal complexity of AFEM
for the general problem, for instance, for a nonsymmetric problem, a
nonlinear problem and an unbounded coefficient eigenvalue problem.

\subsection{Adaptive algorithm}
 Following the element residual $\tilde{\mathcal{R}}_{\tau}(u_h)$
and the jump residual $\tilde{J}_e(u_h)$ for  (\ref{dis-fem}),  we
define the element residual $\mathcal{R}_{\tau}(u_h)$ and the jump
residual $J_e(u_h)$ for  (\ref{Gdis-fem})  as follows:
\begin{eqnarray*}\label{Gresidual}
  \mathcal{R}_{\tau}(u_h) &:=& \ell_{h}u_h-Vu_h-L u_h =\ell_{h}u_h-Vu_h +\nabla \cdot (\mathbf{A}\nabla u_h)~~~~ \mbox{in}~ \tau\in
  \mathcal{T}_h,\\
  J_e(u_h) &:=& -\mathbf{A} \nabla u_h^{+}\cdot \nu^{+} - \mathbf{A} \nabla u_h^{-}\cdot
  \nu^{-} := [[\mathbf{A}\nabla u_h]]_e \cdot \nu_e  ~~~~ \mbox{on}~ e\in
  \mathcal{E}_h.
\end{eqnarray*}
For $\tau\in \mathcal{T}_h$, we define the local error indicator
$\eta_h(u_h, \tau)$ by
  \begin{eqnarray*}\label{Gerror-indicator}
   \eta^2_h(u_h, \tau) := h_\tau^2\|\mathcal{R}_{\tau}(u_h)\|_{0,\tau}^2
   + \sum_{e\in \mathcal{E}_h,e\subset\partial \tau
   } h_e \|J_e(u_h)\|_{0,e}^2
  \end{eqnarray*}
and the oscillation $osc_h(u_h,\tau)$ by
\begin{eqnarray*}\label{Glocal-oscillation}
osc^2_h(u_h,\tau) :=
h_\tau^2\|\mathcal{R}_{\tau}(u_h)-\overline{\mathcal{R}_{\tau}(u_h)}\|_{0,\tau}^2
   + \sum_{e\in \mathcal{E}_h,e\subset\partial \tau
   } h_e \|J_e(u_h)-\overline{J_e(u_h)}\|_{0,e}^2,
\end{eqnarray*}
where $e$ , $\nu^+$ and $\nu^-$ are defined as those in section
\ref{sc:linear boundary}.

Given a subset $\omega \subset \Omega$, we define the error
 estimator $\eta_h(u_h, \omega)$ by
 \begin{eqnarray}\label{Gerror-estimator}
  \eta^2_h(u_h, \omega) := \sum_{\tau\in \mathcal{T}_h, \tau \subset \omega}
  \eta^2_h(u_h, \tau)
  \end{eqnarray}
and the oscillation $osc_h(u_h,\omega)$ by
\begin{eqnarray}\label{Goscilliation}
  osc^2_h(u_h, \omega) :=  \sum_{\tau\in \mathcal{T}_h, \tau \subset \omega}
  osc^2_h(u_h, \tau).
\end{eqnarray}

Let $h_0\in (0,1)$ be the  mesh size of the initial mesh
$\mathcal{T}_0$ and define $$\tilde{\kappa}(h_0):=\sup_{h\in
(0,h_0]}\kappa(h).$$ Obviously, $\tilde{\kappa}(h_0) \ll 1$ if
$h_0\ll 1$.

To analyze the convergence and complexity of finite element
approximations, we need to establish some relationship between the
two level approximations. We use $\mathcal{T}_H$ to denote a coarse
mesh and $\mathcal{T}_h$ to denote a refined mesh of
$\mathcal{T}_H$. Recall that $w^h=K(\ell_h u_h-Vu_h)$ and
$w^H=K(\ell_H u_H-Vu_H)$.
\begin{lemma}\label{lemma-bound-general}
Let $h, H \in (0, h_0]$, then
 \begin{eqnarray}\label{lemma-bound-general-conc-1}
 \|u - u_h \|_{a, \Omega}= \|w^H - P_h w^H \|_{a, \Omega}
  + \mathcal{O} (\tilde{\kappa}(h_0))\left( \|u - u_h
 \|_{a, \Omega}+  \|u - u_H\|_{a, \Omega}\right),~
\end{eqnarray}
\begin{eqnarray}\label{lemma-bound-general-conc-3}
\eta_{h}(u_h, \mathcal{T}_h) = \tilde{\eta}_{h}(P_h w^H,
\mathcal{T}_h) + \mathcal{O}
 (\tilde{\kappa}(h_0))\left( \|u - u_h\|_{a, \Omega} + \|u -
u_H\|_{a, \Omega}\right),
\end{eqnarray}
and
\begin{eqnarray}\label{lemma-bound-general-conc-2}
osc_h(u_h,\mathcal{T}_h)=\widetilde{osc}_h(P_h w^H, \mathcal{T}_h) +
\mathcal{O} (\tilde{\kappa}(h_0))\left( \|u - u_h\|_{a, \Omega} +
\|u - u_H\|_{a, \Omega}\right).
\end{eqnarray}
\end{lemma}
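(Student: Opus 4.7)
My plan is to reduce each of the three identities to Theorem~\ref{thm-general-boundary} combined with a single intermediate estimate
\[
\|w^h - w^H\|_{a,\Omega} \lc \tilde{\kappa}(h_0)\bigl(\|u - u_h\|_{a,\Omega} + \|u - u_H\|_{a,\Omega}\bigr), \qquad (\ast)
\]
which I would establish first. This follows at once from the intermediate bound $\|u - w^h\|_{a,\Omega} \le \tilde{C}\kappa(h)\|u - u_h\|_{a,\Omega}$ that appears inside the proof of Theorem~\ref{thm-general-boundary} (and its twin on $\mathcal{T}_H$), combined with the triangle inequality and the fact that $\kappa(h), \kappa(H) \le \tilde{\kappa}(h_0)$.

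For (\ref{lemma-bound-general-conc-1}), Theorem~\ref{thm-general-boundary} applied on $\mathcal{T}_h$ already gives $\|u - u_h\|_{a,\Omega} = \|w^h - P_h w^h\|_{a,\Omega} + \mathcal{O}(\kappa(h))\|u - u_h\|_{a,\Omega}$, so the only remaining task is to swap $w^h$ for $w^H$. The direct bound
\[
\bigl|\|w^h - P_h w^h\|_{a,\Omega} - \|w^H - P_h w^H\|_{a,\Omega}\bigr| \le \|(I - P_h)(w^h - w^H)\|_{a,\Omega} \le \|w^h - w^H\|_{a,\Omega},
\]
combined with $(\ast)$, delivers the claim.

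For (\ref{lemma-bound-general-conc-3}), the starting point is the identity $u_h = P_h w^h$ from (\ref{u-w}), which makes $u_h - P_h w^H = P_h\xi$ with $\xi := w^h - w^H$. A direct computation of the element and jump residual differences, together with the weak identity $L\xi = (\ell_h u_h - V u_h) - (\ell_H u_H - V u_H)$, collapses them into
\[
\mathcal{R}_\tau(u_h) - \tilde{\mathcal{R}}_\tau(P_h w^H) = -\nabla\cdot(\mathbf{A}\nabla(\xi - P_h\xi))\big|_\tau
\]
and $J_e(u_h) - \tilde{J}_e(P_h w^H) = -[[\mathbf{A}\nabla(\xi - P_h\xi)]]_e\cdot\nu_e$ (using, on interior sides, the vanishing jump of $\mathbf{A}\nabla\xi$ coming from the $\mathcal{T}_0$-resolved Lipschitz regularity of $\mathbf{A}$). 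The $\ell^2$-triangle inequality for $\eta_h$ then yields
\[
\bigl|\eta_h(u_h, \mathcal{T}_h) - \tilde{\eta}_h(P_h w^H, \mathcal{T}_h)\bigr| \le \tilde{\eta}_h(P_h\xi, \mathcal{T}_h),
\]
where the right-hand side is precisely the typical-problem estimator for the Galerkin approximation $P_h\xi$ of $\xi$ with source $L\xi$. Combining with the global lower bound (\ref{boundary-lower}) and $(\ast)$ then gives (\ref{lemma-bound-general-conc-3}) up to an oscillation contribution. The same residual identity, together with the linearity of the element-wise $L^2$-projection into polynomials, produces the parallel bound on the oscillation difference and proves (\ref{lemma-bound-general-conc-2}).

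The main obstacle I anticipate is absorbing the oscillation term $\widetilde{osc}_h(P_h\xi, \mathcal{T}_h)$ arising from the application of (\ref{boundary-lower}), so that it too is of order $\tilde{\kappa}(h_0)(\|u - u_h\|_{a,\Omega} + \|u - u_H\|_{a,\Omega})$. My approach is to estimate $\tilde{\eta}_h(P_h\xi, \mathcal{T}_h)$ directly rather than through the lower bound: use inverse estimates to bound $h_\tau\|\nabla\cdot(\mathbf{A}\nabla P_h\xi)\|_{0,\tau}$ and $h_e^{1/2}\|[[\mathbf{A}\nabla P_h\xi]]\|_{0,e}$ by $\eta_h(\mathbf{A},\tau)\|P_h\xi\|_{a,\omega_\tau}$ exactly as in the derivation leading to (\ref{gamma-boundary}), absorbing the coefficient factor into $\eta_0^2(\mathbf{A},\mathcal{T}_0)$, and use the boundedness of $V$, $\ell_h$, $\ell_H$ together with the perturbation hypothesis (\ref{assume}) to control $h_0\|L\xi\|_{0,\Omega}$ by the admissible right-hand side.
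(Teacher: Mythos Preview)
Your proposal is correct and uses the same core ingredients as the paper: the intermediate bound $(\ast)$ (which the paper records as (\ref{thm1-neq1}) at both levels and as (\ref{temp1}) for $P_h(w^h-w^H)$), the identification of $\eta_h(u_h,\cdot)-\tilde{\eta}_h(P_hw^H,\cdot)$ with the typical-problem estimator $\tilde{\eta}_h(P_h\xi,\cdot)$ for $\xi=w^h-w^H$, and the control of the source term $h_\tau\|L\xi\|_{0,\tau}$ through (\ref{assume}) and the boundedness of $V$ (the paper's (\ref{rightterm})).

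The only organisational difference is the order in which the estimator and oscillation identities are handled. The paper proves (\ref{lemma-bound-general-conc-2}) \emph{first}, by splitting $\widetilde{osc}_h(P_h\xi,\tau)$ into a source part (bounded by $h_\tau\|L\xi\|_{0,\tau}$) and a coefficient part (bounded by $osc_h(\mathbf{A},\tau)\|P_h\xi\|_{1,\omega_\tau}$, see (\ref{temp4})--(\ref{temp_osc})); it then feeds this oscillation bound into the lower bound (\ref{boundary-lower}) to obtain (\ref{lemma-bound-general-conc-3}) via (\ref{lem-bound-general5}). Your ``direct'' route in the last paragraph---bounding $\tilde{\eta}_h(P_h\xi,\mathcal{T}_h)$ itself by the same inverse-estimate and source-term arguments, thereby getting (\ref{lemma-bound-general-conc-3}) and (\ref{lemma-bound-general-conc-2}) in one stroke since $\widetilde{osc}_h\le\tilde{\eta}_h$---is a legitimate shortcut that avoids invoking (\ref{boundary-lower}) at all. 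Either way, the decisive technical step is identical: the estimate corresponding to (\ref{rightterm}).
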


\begin{proof}
First, we prove (\ref{lemma-bound-general-conc-1}).  It follows that
 \begin{eqnarray*} \|P_h (w^h - w^H)+ u - w^H
\|_{a,\Omega} &\lc& \| w^h - w^H\|_{a,\Omega} + \|u -
w^H\|_{a,\Omega}\\ &\lc& \|u - w^H\|_{a,\Omega} + \|u -
w^h\|_{a,\Omega},
\end{eqnarray*}
which together with (\ref{thm1-neq1}) implies
\begin{eqnarray*}
\|P_h (w^h - w^H) + w^H - u\|_{a,\Omega}\lc \kappa(H) \|u -
u_H\|_{a,\Omega} +  \kappa(h)\|u - u_h\|_{a,\Omega}.
\end{eqnarray*}
Namely,
\begin{eqnarray}\label{lem-bound-general1}
\|P_h (w^h - w^H) + w^H - u\|_{a,\Omega}\lc \tilde{\kappa}(h_0)(\|u
- u_H\|_{a,\Omega} + \|u - u_h\|_{a,\Omega}).
\end{eqnarray}
%where $\hat{C}$ is a constant depending on $C_R$ and $\tilde{C}$.
Observing that identity  (\ref{u-w}) leads to
 \begin{eqnarray*}\label{lem-bound-general2}
u-u_h=w^H - P_h w^H+P_h(w^H -w^h)+u-w^H,
 \end{eqnarray*}
 we  then obtain (\ref{lemma-bound-general-conc-1}) from
 (\ref{lem-bound-general1}).

Next, we turn to  prove (\ref{lemma-bound-general-conc-2}). Due to
$Lw^h=\ell_hu_h-Vu_h$ and $Lw^H=\ell_Hu_H-Vu_H$,  we know that $w^h
- w^H$ is the solution of typical boundary value problem with
$\ell_hu_h-\ell_Hu_H+Vu_H-Vu_h$ as a source term. Since
%\begin{eqnarray*}
%\tilde{\mathcal{R}}_{\tau}(P_hw^h)=\ell_hu_h-Vu_h-L(P_hw^h),
%\end{eqnarray*}
%\begin{eqnarray*}
%\tilde{\mathcal{R}}_{\tau}(P_hw^H)=\ell_Hu_H-Vu_H-L(P_hw^H),
%\end{eqnarray*}
%and
\begin{eqnarray*}
\tilde{\mathcal{R}}_{\tau}(P_h(w^h-w^H))=\ell_hu_h-\ell_Hu_H+Vu_H-Vu_h-L(P_h(w^h-w^H)),
\end{eqnarray*}
 we have
\begin{eqnarray}\label{temp4}
& &\widetilde{osc}^2_h(P_h(w^h-w^H),\mathcal{T}_h) =\sum_{\tau\in
\mathcal{T}_h}\widetilde{osc}^2_h(E,\tau)\nonumber\\
 &=&\sum_{\tau\in \mathcal{T}_h}\big(h^2_{\tau}\|\tilde{\mathcal{R}}_{\tau}(E)-\overline{\tilde{\mathcal{R}}_{\tau}(E)}\|^2_{0,\tau}
+\sum_{e\in \mathcal{E}_h,e\subset\partial \tau
   } h_e
   \|\tilde{J}_e(E)-\overline{\tilde{J}_e(E)}\|^2_{0,e}\big)\nonumber\\
   &\leq&\sum_{\tau\in
   \mathcal{T}_h}h^2_{\tau}\|\tilde{\mathcal{R}}_{\tau}(E)+LE-\overline{(\tilde{\mathcal{R}}_{\tau}(E)+LE)}\|^2_{0,\tau}\nonumber\\
   && +\sum_{\tau\in\mathcal{T}_h}\big(h^2_{\tau}\|LE-\overline{LE}\|^2_{0,\tau}+
   \sum_{e\in
\mathcal{E}_h,e\subset\partial \tau   } h_e
\|\tilde{J}_e(E)-\overline{\tilde{J}_e(E)}\|^2_{0,e}\big),
\end{eqnarray}
where $E=P_h(w^h-w^H)$.
%We split the divergence term as
%\begin{eqnarray*}
%div(\mathbf{A}\nabla E)=div\mathbf{A}\cdot \nablaE +
%\mathbf{A}:D^2E,
%\end{eqnarray*}
%where $D^2E$ is the Hessian of E. By invoking Lemma 3.2 in paper
%\cite{cascon-kreuzer-nochetto-siebert-07}, we infer that
Following the  proof of Proposition 3.3 in
\cite{cascon-kreuzer-nochetto-siebert-07}, we see that
$$\sum_{\tau\in\mathcal{T}_h}\big(h^2_{\tau}\|LE-\overline{LE}\|^2_{0,\tau}+
   \sum_{e\in
\mathcal{E}_h,e\subset\partial \tau   } h_e
\|\tilde{J}_e(E)-\overline{\tilde{J}_e(E)}\|^2_{0,e}\big)$$ can be
bounded by
\begin{eqnarray*}
\sum_{\tau\in\mathcal{T}_h}C_0^2
osc^2_h(\mathbf{A},\tau)\|P_h(w^h-w^H)\|^2_{1,\omega_{\tau}}\lc
osc^2_h(\mathbf{A},\mathcal{T}_h)\|P_h(w^h-w^H)\|^2_{a,\Omega}.
\end{eqnarray*}Hence using the fact
$osc_h(\mathbf{A},\mathcal{T}_h)\leq
osc_0(\mathbf{A},\mathcal{T}_0)$, we obtain
\begin{eqnarray}\label{temp_osc}
&&\sum_{\tau\in\mathcal{T}_h}\big(h^2_{\tau}\|LE-\overline{LE}\|^2_{0,\tau}+
   \sum_{e\in
\mathcal{E}_h,e\subset\partial \tau   } h_e
\|\tilde{J}_e(E)-\overline{\tilde{J}_e(E)}\|^2_{0,e}\big)\nonumber\\
&\lc&
osc^2_0(\mathbf{A},\mathcal{T}_0)\|P_h(w^h-w^H)\|^2_{a,\Omega}.
\end{eqnarray}

Using the inverse inequality, the bounded property of $V$ and
(\ref{assume}), we get
\begin{eqnarray}\label{rightterm}
& &\big(\sum_{\tau\in
   \mathcal{T}_h}h^2_{\tau}\|\tilde{\mathcal{R}}_{\tau}(E)+LE-
   \overline{(\tilde{\mathcal{R}}_{\tau}(E)+LE)}\|^2_{0,\tau}\big)^{1/2}\nonumber\\
&\lc&\big(\sum_{\tau\in
   \mathcal{T}_h}\|h_{\tau}(\ell_hu_h-\ell_Hu_H+Vu_H-Vu_h)\|^2_{0,\tau}\big)^{1/2}\nonumber\\
 %  &\lc&
%h\|\ell_hu_h-\ell_Hu_H+Vu_H-Vu_h\|_{0,\Omega}\nonumber\\
%&\lc& h\|\ell_hu_h-\ell_Hu_H\|_{0,\Omega}+h\|Vu_H-Vu_h\|_{0,\Omega}\nonumber\\
&\lc& \|K(\ell_hu_h-\ell_Hu_H)\|_{a,\Omega}+h\|u_H-u_h\|_{a,\Omega}\nonumber\\
&\lc&\|K(\ell_hu_h-\ell u)\|_{a,\Omega}+\|K(\ell_Hu_H-\ell
u)\|_{a,\Omega}\nonumber\\
& & +h\|u-u_H\|_{a,\Omega}+h\|u-u_h\|_{a,\Omega}\nonumber\\
%\end{eqnarray*}
%and thus
%\begin{eqnarray}\label{rightterm}
%& &\big(\sum_{\tau\in
%   \mathcal{T}_h}h^2_{\tau}\|\tilde{\mathcal{R}}_{\tau}(E)+LE-
%   \overline{(\tilde{\mathcal{R}}_{\tau}(E)+LE)}\|^2_{0,\tau}\big)^{1/2}\nonumber\\
&\lc&\tilde{\kappa}(h_0)\left( \|u - u_h\|_{a, \Omega} + \|u -
u_H\|_{a, \Omega}\right).
\end{eqnarray}
Note that
\begin{eqnarray*}
& &\|P_h (w^h - w^H)\|_{a,\Omega} \lc \| w^h - w^H\|_{a,\Omega} \nonumber\\
&\lc& \|u-w^h\|_{a,\Omega}+\|u-w^H\|_{a,\Omega},
\end{eqnarray*}
which together with (\ref{thm1-neq1}) implies
\begin{eqnarray}\label{temp1}
\|P_h (w^h - w^H)\|_{a,\Omega} &\lc&\tilde{\kappa}(h_0)\left( \|u -
u_h\|_{a, \Omega} + \|u - u_H\|_{a, \Omega}\right).
\end{eqnarray}
Combing (\ref{temp4}), (\ref{temp_osc}), (\ref{rightterm}) and
(\ref{temp1}), we conclude that
\begin{eqnarray}\label{temp5}
\widetilde{osc}_h(P_h(w^h-w^H),\mathcal{T}_h)&\lc&
%\tilde{\kappa}(h_0)\left( \|u - u_h\|_{a, \Omega} + \|u - u_H\|_{a,
%\Omega}\right)\nonumber\\&+&
%osc_0(\mathbf{A},\mathcal{T}_0)\|P_h(w^h-w^H)\|_{a,\Omega}\nonumber\\&\lc&
\tilde{\kappa}(h_0)\left(
\|u - u_h\|_{a, \Omega} + \|u - u_H\|_{a, \Omega}\right).
\end{eqnarray}
Due to $u_h=P_h w^H+P_h(w^h-w^H)$,
 we obtain from the definition of oscillation that
\begin{eqnarray}\label{temp6}
\widetilde{osc}_h(P_h w^h,\mathcal{T}_h)\leq\widetilde{osc}_h(P_h
w^H,\mathcal{T}_h) + \widetilde{osc}_h(P_h (w^h-w^H),\mathcal{T}_h).
\end{eqnarray}
Hence from
$\widetilde{osc}_h(u_h,\mathcal{T}_h)=osc_h(u_h,\mathcal{T}_h)$,
(\ref{temp5}) and (\ref{temp6}), we arrive at
(\ref{lemma-bound-general-conc-2}).

Finally, we  prove (\ref{lemma-bound-general-conc-3}). By
(\ref{boundary-lower}) and (\ref{temp5}), we have
\begin{eqnarray}\label{lem-bound-general5}
& &\tilde{\eta}_h(P_h( w^h - w^H), \mathcal{T}_h)\nonumber\\ &\lc&
\| (w^h - w^H) -P_h(w^h - w^H)\|_{a, \Omega}+
\widetilde{osc}_h(P_h(w^h - w^H), \mathcal{T}_h)
\nonumber\\
&\lc & \|u - w^h\|_{a, \Omega} +  \|u - w^H\|_{a, \Omega} +
\tilde{\kappa}(h_0)\left( \|u - u_h\|_{a, \Omega} + \|u - u_H\|_{a,
\Omega}\right)\nonumber\\
&\lc&\tilde{\kappa}(h_0)\left( \|u - u_h\|_{a, \Omega} + \|u -
u_H\|_{a, \Omega}\right).
\end{eqnarray}
From (\ref{lem-bound-general5}) and the fact that
\begin{eqnarray*}
\tilde{\eta}_h(P_h w^h, \mathcal{T}_h) = \tilde{\eta}_h(P_h w^H +
P_h( w^h - w^H), \mathcal{T}_h),
\end{eqnarray*}
we obtain
\begin{eqnarray*}
\tilde{\eta}_h(P_h w^h, \mathcal{T}_h) = \tilde{\eta}_h(P_h w^H,
\mathcal{T}_h)
 + \mathcal{O} (\tilde{\kappa}(h_0))\left( \|u - u_h\|_{a, \Omega} + \|u -
u_H\|_{a, \Omega}\right),
\end{eqnarray*}
which is nothing but (\ref{lemma-bound-general-conc-3}) since
$\tilde{\eta}_h(P_h w^h, \mathcal{T}_h) =
\eta_h(u_h,\mathcal{T}_h)$.
\end{proof}

%\begin{theorem}\label{thm-error-estimator}
\begin{theorem}
Let $h_0 \ll 1$ and $h \in (0, h_0].$ There exist constants $C_1,
C_2$ and  $C_3$, which only depend on the shape regularity constant
$\gamma^{\ast}$, $C_a$ and $c_a$ such that
 \begin{eqnarray}\label{upper-bound}
  \|u-u_h\|^2_{a,\Omega} \leq C_1 \eta^2_h(u_h, \mathcal{T}_h)
 \end{eqnarray}
 and
 \begin{eqnarray}\label{lower-bound}
~~~~~~C_2 \eta^2_h(u_h, \mathcal{T}_h) \le
 \|u-u_h\|_{a,\Omega}^2+ C_3osc_h^2(u_h, \mathcal {T}_h).
 \end{eqnarray}
\end{theorem}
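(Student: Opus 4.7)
The plan is to reduce the two inequalities to the known a posteriori bounds \eqref{boundary-upper}--\eqref{boundary-lower} for the typical problem, applied to $w^h$ and its Galerkin projection, and then absorb the perturbation terms coming from Theorem \ref{thm-general-boundary} and Lemma \ref{lemma-bound-general}.

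The key observation, which I would spell out first, is a pointwise identification of the residuals. By construction $Lw^h=\ell_h u_h - Vu_h$, and by \eqref{u-w} we have $u_h=P_h w^h$. Comparing the definitions of $\tilde{\mathcal{R}}_{\tau}(\cdot)$, $\tilde{J}_e(\cdot)$ with $\mathcal{R}_{\tau}(\cdot)$, $J_e(\cdot)$ then yields
\begin{eqnarray*}
\tilde{\mathcal{R}}_{\tau}(P_h w^h)=\mathcal{R}_{\tau}(u_h),\qquad \tilde{J}_e(P_h w^h)=J_e(u_h),
\end{eqnarray*}
and therefore
\begin{eqnarray*}
\tilde{\eta}_h(P_h w^h,\mathcal{T}_h)=\eta_h(u_h,\mathcal{T}_h),\qquad \widetilde{osc}_h(P_h w^h,\mathcal{T}_h)=osc_h(u_h,\mathcal{T}_h).
\end{eqnarray*}
In other words, the generalized estimator for $u_h$ is literally the standard boundary-value estimator for the typical problem whose exact solution is $w^h$.

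For the upper bound \eqref{upper-bound}, I would start from Theorem \ref{thm-general-boundary}: $\|u-u_h\|_{a,\Omega}\le \|w^h-P_h w^h\|_{a,\Omega}+C\kappa(h)\|u-u_h\|_{a,\Omega}$. Since $h\in(0,h_0]$ with $h_0\ll 1$, we have $C\kappa(h)\le C\tilde{\kappa}(h_0)<1/2$, so the perturbation term can be absorbed into the left-hand side, giving $\|u-u_h\|_{a,\Omega}\lesssim \|w^h-P_h w^h\|_{a,\Omega}$. Applying the classical upper bound \eqref{boundary-upper} to $w^h$ (which solves the typical problem with right-hand side $\ell_h u_h-Vu_h$) and its Galerkin projection $P_h w^h=u_h$, together with the residual identity above, produces \eqref{upper-bound} with $C_1$ proportional to $\tilde{C}_1$.

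For the lower bound \eqref{lower-bound}, I would apply \eqref{boundary-lower} in the same way:
\begin{eqnarray*}
\tilde{C}_2\tilde{\eta}_h^2(P_h w^h,\mathcal{T}_h)\le \|w^h-P_h w^h\|_{a,\Omega}^2+\tilde{C}_3\widetilde{osc}_h^2(P_h w^h,\mathcal{T}_h).
\end{eqnarray*}
The triangle inequality together with \eqref{thm1-neq1} gives $\|w^h-P_h w^h\|_{a,\Omega}\le \|u-u_h\|_{a,\Omega}+\|u-w^h\|_{a,\Omega}\le(1+C\tilde{\kappa}(h_0))\|u-u_h\|_{a,\Omega}$, and the residual identities replace $\tilde{\eta}_h(P_h w^h,\mathcal{T}_h)$ and $\widetilde{osc}_h(P_h w^h,\mathcal{T}_h)$ by their untilded counterparts. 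This yields \eqref{lower-bound} with constants $C_2,C_3$ depending only on $\tilde{C}_2,\tilde{C}_3$ and $\gamma^\ast, C_a, c_a$.

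I do not expect any serious obstacle: the whole argument is a clean reduction, and the only point requiring care is verifying the residual identities (which comes from $Lw^h=\ell_h u_h-Vu_h$) and ensuring that $h_0$ is taken small enough so that the factor $C\tilde{\kappa}(h_0)$ coming from Theorem \ref{thm-general-boundary} is strictly less than one, allowing the absorption step in the upper bound.
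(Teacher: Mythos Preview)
Your proposal is correct and follows essentially the same route as the paper. The paper's proof is terser: it simply records the classical bounds \eqref{boundary-upper}--\eqref{boundary-lower} applied to $w^h$ and $P_h w^h=u_h$, then cites \eqref{u-w} and \eqref{general-boundary-neq} to pass between $\|w^h-P_h w^h\|_{a,\Omega}$ and $\|u-u_h\|_{a,\Omega}$, arriving at the explicit constants $C_1=\tilde{C}_1(1+\tilde{C}\tilde{\kappa}(h_0))^2$, $C_2=\tilde{C}_2(1-\tilde{C}\tilde{\kappa}(h_0))^2$, $C_3=\tilde{C}_3(1-\tilde{C}\tilde{\kappa}(h_0))^2$. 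Your residual identities $\tilde{\eta}_h(P_h w^h,\mathcal{T}_h)=\eta_h(u_h,\mathcal{T}_h)$ and $\widetilde{osc}_h(P_h w^h,\mathcal{T}_h)=osc_h(u_h,\mathcal{T}_h)$ are exactly the implicit identifications the paper uses (and states explicitly elsewhere, in the proof of Lemma~\ref{lemma-bound-general}); you do not need Lemma~\ref{lemma-bound-general} itself here, only Theorem~\ref{thm-general-boundary} and \eqref{thm1-neq1}, which is indeed all you invoke.
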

\begin{proof} Recall that $L w^h = \ell_{h}u_h-Vu_h$.
From (\ref{boundary-upper}) and
 (\ref{boundary-lower}) we have
\begin{eqnarray}\label{auxiliary-boundary-problem-upper}
  \|w^h - P_h w^h \|^2_{a,\Omega} \leq \tilde{C}_1 \tilde{\eta}^2_h (P_h w^h, \mathcal{T}_h)
\end{eqnarray}
and
\begin{eqnarray}\label{auxiliary-boundary-problem-lower}
\tilde{C}_2 \tilde{\eta}^2_h (P_h w^h, \mathcal{T}_h) &\le&
\|w^h-P_h
 w^h\|_{a,\Omega}^2+\tilde{C}_3\widetilde{osc}^2_h(P_h w^h,
\mathcal{T}_h) .
\end{eqnarray}
Thus we obtain (\ref{upper-bound}) and (\ref{lower-bound}) from
(\ref{u-w}), (\ref{general-boundary-neq}),
(\ref{auxiliary-boundary-problem-upper}) and
(\ref{auxiliary-boundary-problem-lower}). In particular, we may
choose $C_1$, $C_2$ and $C_3$ satisfying
\begin{eqnarray}\label{coef-eigen-bound}
  C_1 = \tilde{C}_1 (1+ \tilde{C} \tilde{\kappa}(h_0))^2, ~~C_2 = \tilde{C}_2 (1 - \tilde{C}
  \tilde{\kappa}(h_0))^2, ~~C_3 = \tilde{C}_3 (1 - \tilde{C}\tilde{\kappa}(h_0))^2.
\end{eqnarray}
\end{proof}

\begin{remark}
The requirement $h_0\ll 1$ is somehow reasonable for finite element
approximations of (\ref{Gvariation}). We can refer to
\cite{mekchay-nochetto-05} for the initial mesh size requirement in
adaptive finite element computations for nonsymmetirc boundary value
problems.
\end{remark}\vskip 0.2cm

%Here we replace the subscript $h$ by an iteration counter called $k$.
Now we address  step {\bf MARK} of solving (\ref{Gdis-fem}) in
detail, which we call {\bf Marking Strategy $E$}. Similar to {\bf
Marking Strategy $E_0$} for (\ref{dis-fem}), we define {\bf Marking
Strategy $E$} for (\ref{Gdis-fem}) to enforce error reduction as
follows:

 Given a parameter $0<\theta < 1$:
\begin{enumerate}
 \item Construct a minimal subset $\mathcal{M}_k$ of $\mathcal{T}_k$ by selecting some elements in
 $\mathcal{T}_k$ such that
\begin{eqnarray*}
\eta_k(u_k, \mathcal{M}_k)  \geq \theta
 \eta_k(u_k, \mathcal{T}_k).
 \end{eqnarray*}
 \item Mark all the elements in $\mathcal{M}_k$.
 \end{enumerate}

The adaptive algorithm of solving (\ref{Gdis-fem}), which we call
{\bf Algorithm $D$}, is nothing but {\bf Algorithm $D_0$} when {\bf
Marking Strategy $E_0$} is replaced by {\bf Marking Strategy $E$}.
\subsection{Convergence}\label{section-convergence}
 We now prove that  {\bf Algorithm $D$} of
(\ref{Gdis-fem}) is a contraction with respect to the sum of the
energy error plus the scaled error estimator.
\begin{theorem}\label{error-reduction}
Let $\theta \in (0,1)$ and $\{u_k\}_{k\in\mathbb{N}_0}$ be a
sequence of finite element
 solutions  corresponding to a sequence of nested finite element
 spaces $\{S^k_0(\Omega)\}_{k\in \mathbb{N}_0}$ produced by {\bf Algorithm
 $D$}.
 Then there exist constants
$\gamma>0$ and $\xi \in (0,1)$
  depending only on the shape regularity constant $\gamma^{\ast}$, $ C_a $, $c_a$ and the marking parameter
  $\theta$ such that
\begin{eqnarray}\label{error-reduction-neq1}
 & & \|u-u_{k+1}\|_{a,\Omega}^2+\gamma \eta^2_{k+1}(u_{k+1}, \mathcal{T}_{k+1})\nonumber\\
   &\leq& \xi^2 \big(\|u-u_k\|_{a,\Omega}^2
   +\gamma \eta^2_k(u_k, \mathcal{T}_k)\big).
\end{eqnarray}
Here,
\begin{eqnarray}\label{gamma}
\gamma :=  \frac{\tilde{\gamma}}{1 - C_4
\delta_1^{-1}\tilde{\kappa}^2(h_0)}
\end{eqnarray}
with $C_4$ a positive constant, provided $h_0\ll 1$.
\end{theorem}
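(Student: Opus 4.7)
The strategy is to bootstrap Theorem \ref{convergence-boundary} via the perturbation identities of Lemma \ref{lemma-bound-general}. Fix $k$ and set $w := w^k = K(\ell_k u_k - V u_k)$, so that $w$ is the exact solution of the typical boundary value problem with source $\ell_k u_k - V u_k$. By (\ref{u-w}) its Galerkin projection is $P_k w = u_k$; consequently the residuals defining the two estimators coincide identically on $\mathcal{T}_k$:
$$\tilde{\eta}_k(P_k w,\cdot)=\eta_k(u_k,\cdot).$$
In particular, the D\"orfler condition $\eta_k(u_k,\mathcal{M}_k)\ge\theta\,\eta_k(u_k,\mathcal{T}_k)$ enforced by {\bf Algorithm $D$} is exactly the D\"orfler condition for this auxiliary typical problem that {\bf Algorithm $D_0$} would require, so Theorem \ref{convergence-boundary} applies verbatim on the mesh pair $(\mathcal{T}_k,\mathcal{T}_{k+1})$ and yields
$$\|w-P_{k+1}w\|_{a,\Omega}^2+\tilde{\gamma}\,\tilde{\eta}_{k+1}^2(P_{k+1}w,\mathcal{T}_{k+1})\le\tilde{\xi}^2\bigl(\|w-P_k w\|_{a,\Omega}^2+\tilde{\gamma}\,\tilde{\eta}_k^2(P_k w,\mathcal{T}_k)\bigr).$$

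Next I would translate each of the four terms above into its general-problem counterpart. On the right-hand side, Lemma \ref{lemma-bound-general} with $H=h=k$ gives $\|w-P_k w\|_{a,\Omega}=(1+\mathcal{O}(\tilde{\kappa}(h_0)))\|u-u_k\|_{a,\Omega}$, and the estimator identity above is exact. On the left-hand side, Lemma \ref{lemma-bound-general} with $H=k,\ h=k+1$ combined with Young's inequality (free parameter $\delta_1>0$) yields
$$\|w-P_{k+1}w\|_{a,\Omega}^2\ge(1-\delta_1)\|u-u_{k+1}\|_{a,\Omega}^2-C\delta_1^{-1}\tilde{\kappa}^2(h_0)\bigl(\|u-u_{k+1}\|_{a,\Omega}^2+\|u-u_k\|_{a,\Omega}^2\bigr),$$
together with an analogous lower bound for $\tilde{\eta}_{k+1}^2(P_{k+1}w,\mathcal{T}_{k+1})$ in terms of $\eta_{k+1}^2(u_{k+1},\mathcal{T}_{k+1})$.

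Substituting these translations into the typical-problem contraction, collecting all $\tilde{\kappa}^2(h_0)/\delta_1$ remainder terms on the right, and absorbing them by inflating $\tilde{\gamma}$ into $\gamma=\tilde{\gamma}/(1-C_4\delta_1^{-1}\tilde{\kappa}^2(h_0))$ as in (\ref{gamma}) should produce the desired contraction (\ref{error-reduction-neq1}) with some $\xi\in(\tilde{\xi},1)$. The hypothesis $h_0\ll 1$ ensures both that the denominator in (\ref{gamma}) is positive and that the $\mathcal{O}(\tilde{\kappa}(h_0))$ inflation of $\xi$ beyond $\tilde{\xi}$ stays below $1$.

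The main obstacle will be the simultaneous choice of the two small parameters $\delta_1$ and $\tilde{\kappa}(h_0)$: $\delta_1$ must be small enough that the $(1-\delta_1)^{-1}$ amplification factor does not push $\xi$ past $1$, yet large enough relative to $\tilde{\kappa}^2(h_0)$ that $C_4\delta_1^{-1}\tilde{\kappa}^2(h_0)<1$ keeps the denominator in (\ref{gamma}) bounded away from zero. A secondary technical point is verifying that the lower-bound version of (\ref{lemma-bound-general-conc-3}) can be combined with the a posteriori upper bound (\ref{upper-bound}) to guarantee that all residual $\|u-u_j\|$-terms on the right can indeed be subsumed into the scaled estimator $\gamma\,\eta_k^2(u_k,\mathcal{T}_k)$ of the initial state; this is what ultimately dictates the precise form of $\gamma$ in (\ref{gamma}).
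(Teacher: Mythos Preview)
Your proposal is correct and follows essentially the same route as the paper: apply Theorem \ref{convergence-boundary} to the auxiliary problem for $w^k$ (the D\"orfler conditions coincide because $\tilde\eta_k(P_kw^k,\cdot)=\eta_k(u_k,\cdot)$), then translate both sides via Lemma \ref{lemma-bound-general} and Young's inequality with $\delta_1$ fixed by $(1+\delta_1)\tilde\xi^2<1$. One minor simplification: the $\tilde\kappa^2(h_0)\|u-u_j\|^2$ remainders are absorbed directly into the energy-error terms already present on each side (this absorption on the left is exactly what produces the factor $1-C_4\delta_1^{-1}\tilde\kappa^2(h_0)$ in (\ref{gamma})), so the a~posteriori upper bound (\ref{upper-bound}) is never invoked.
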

\begin{proof}
For convenience, we use $u_h$, $u_H$ to denote $u_{k+1}$ and
$u_{k}$, respectively. Thus we only need to prove that for $u_h$ and
$u_H$, there holds,
\begin{eqnarray*}\label{error-reduction-neq-2}
~ \|u-u_h\|_{a,\Omega}^2 + \gamma \eta^2_{h}(u_{h},
\mathcal{T}_h)\leq \xi^2 \big(\|u-u_H\|_{a,\Omega}^2 +\gamma
\eta^2_{H}(u_H, \mathcal{T}_H)\big).
\end{eqnarray*}
We conclude from Theorem \ref{convergence-boundary}, $w^h=K(\ell_h
u_h-Vu_h)$ and $ w^H=K(\ell_H u_H-Vu_H)$ that there exist constants
$\tilde{\gamma}>0$ and $\tilde{\xi}\in (0,1)$ satisfying
\begin{eqnarray*}
& & \|w^H-P_h w^H\|_{a,\Omega}^2 +\tilde{\gamma}\tilde{\eta}^2_h(P_h
w^H, \mathcal{T}_h) \nonumber\\
 &\leq&  \tilde{\xi}^2 \big( \|w^H- P_H w^H \|_{a,\Omega}^2+ \tilde{\gamma}
 \tilde{\eta}^2_H(P_H w^H,\mathcal{T}_H)\big).
 \end{eqnarray*}
Hence use the fact that $u_H=P_Hw^H$, we obtain
\begin{eqnarray}\label{error-reduction-neq-4}
 && \|w^H-P_h
w^H\|_{a,\Omega}^2 +\tilde{\gamma}\tilde{\eta}^2_h(P_h w^H,
\mathcal{T}_h) \nonumber\\&\leq& \tilde{\xi}^2 \big( \|w^H- u_H
\|_{a,\Omega}^2+ \tilde{\gamma} \eta^2_H(u_H, \mathcal{T}_H)\big).
\end{eqnarray}

By (\ref{lemma-bound-general-conc-1}) and
(\ref{lemma-bound-general-conc-3}), there exists a constant
$\hat{C}>0$ such that
\begin{eqnarray*}
 & &\|u - u_h\|_{a, \Omega}^2 + \tilde{\gamma}\eta^2_h(u_h,
 \mathcal{T}_h)\nonumber\\
 &\leq& (1+\delta_1) \|w^H - P_h w^H\|_{a, \Omega}^2+ (1 + \delta_1) \tilde{\gamma} \tilde{\eta}^2_h(P_h w^H,
\mathcal{T}_h) \nonumber\\
 &&+ \hat{C} (1+\delta_1^{-1})\tilde{\kappa}^2(h_0) (\|u - u_h\|_{a, \Omega}^2 + \|u -
 u_H\|_{a, \Omega}^2)\nonumber\\
 &&+\hat{C} (1+\delta_1^{-1}) \tilde{\kappa}^2(h_0)
\tilde{\gamma}(\|u-u_h\|_{a, \Omega}^2+\|u - u_H\|_{a, \Omega}^2),
\end{eqnarray*}
where the Young's inequality is used and $\delta_1\in (0,1)$
satisfies
\begin{eqnarray}\label{error-reduction-neq-delta}
 (1 + \delta_1) \tilde{\xi}^2 <1.
 \end{eqnarray}
It thus follows from (\ref{error-reduction-neq-4}),
(\ref{thm1-neq1}), and identity
 $\tilde{\eta}_H(P_H w^H, \mathcal{T}_H) = \eta_H(u_H,
\mathcal{T}_H)$ that there exists a positive constant $C^{\ast}$
depending on $\hat{C}$ and $\tilde{\gamma}$ such that
\begin{eqnarray*}
& & \|u - u_h\|_{a, \Omega}^2 + \tilde{\gamma}\eta^2_h(u_h,
 \mathcal{T}_h)\nonumber\\
&\le& (1+ \delta_1) \tilde{\xi}^2 \big(\|w^H- u_H \|_{a, \Omega}^2+
\tilde{\gamma} \eta^2_H(u_H, \mathcal{T}_H)\big)\nonumber\\
& & + C^{\ast} \delta_1^{-1} \tilde{\kappa}^2(h_0) (\|u-u_h\|_{a,
\Omega}^2+\|u - u_H\|_{a, \Omega}^2)\nonumber\\
 &\leq& (1+ \delta_1) \tilde{\xi}^2 \left(\big( 1 +
\tilde{C}\tilde{\kappa}(h_0)\big)^2 \|u -
u_H\|_{a, \Omega}^2 +  \tilde{\gamma} \eta^2_H(u_H, \mathcal{T}_H)\right) \nonumber\\
&&  + C^{\ast} \delta_1^{-1}
\tilde{\kappa}^2(h_0)\left(\|u-u_h\|_{a, \Omega}^2+\|u - u_H\|_{a,
\Omega}^2\right).
\end{eqnarray*}
Hence, if $h_0\ll 1$, then there exists a positive constant $C_4$
depending on $C^{\ast}$ and $\tilde{C}$  such that
\begin{eqnarray*}
&&\|u - u_h\|_{a, \Omega}^2 + \tilde{\gamma}\eta^2_h(u_h,
 \mathcal{T}_h)\nonumber\\ &\leq& (1+
\delta_1) \tilde{\xi}^2\left(\|u - u_H\|^2_{a, \Omega} +
\tilde{\gamma}
\eta^2_H(u_H, \mathcal{T}_H)\right)\nonumber\\
&& +   C_4  \tilde{\kappa}(h_0)  \|u - u_H\|^2_{a, \Omega} + C_4
\delta_1^{-1} \tilde{\kappa}^2(h_0) \|u - u_h\|_{a, \Omega}^2.
\end{eqnarray*}
Consequently,
\begin{eqnarray*}
&&\big(1 - C_4 \delta_1^{-1}  \tilde{\kappa}^2(h_0)\big) \|u -
u_h\|_{a,
\Omega}^2 + \tilde{\gamma}\eta^2_h(u_h, \mathcal{T}_h)\nonumber\\
 & \leq &\big((1+ \delta_1) \tilde{\xi}^2 +  C_4
 \tilde{\kappa}(h_0)\big)\|u - u_H\|_{a, \Omega}^2 + (1+ \delta_1) \tilde{\xi}^2 \tilde{\gamma}
\eta^2_H(u_H, \mathcal{T}_H),
\end{eqnarray*}
that is
\begin{eqnarray*}
& &\|u -u_h\|_{a, \Omega}^2 + \frac{\tilde{\gamma}}{1 - C_4
\delta_1^{-1} \tilde{\kappa}^2(h_0)}\eta^2_h(u_h, \mathcal{T}_h)\nonumber\\
~~~~ &\leq& \frac{ (1+ \delta_1) \tilde{\xi}^2 +  C_4
 \tilde{\kappa}(h_0) }{1 - C_4 \delta_1^{-1}
 \tilde{\kappa}^2(h_0)}\|u -u_H\|_{a, \Omega}^2
  +\frac{(1+ \delta_1) \tilde{\xi}^2 \tilde{\gamma}}{1 - C_4 \delta_1^{-1}
\tilde{\kappa}^2(h_0)}\eta^2_H(u_H, \mathcal{T}_H).
\end{eqnarray*}

Since $h_0\ll 1$ implies ${\tilde r}(h_0)\ll 1$, we have that the
constant $\xi$ defined by
\begin{eqnarray*}
\xi := \left(\frac{ (1+ \delta_1) \tilde{\xi}^2 +  C_4
 \tilde{\kappa}(h_0)}{1 - C_4 \delta_1^{-1}
 \tilde{\kappa}^2(h_0)}\right)^{1/2}
\end{eqnarray*}
satisfying $\xi \in (0,1)$ if $h_0\ll 1$. Therefore,
\begin{eqnarray*}
&&\|u -u_h\|_a^2 +  \frac{\tilde{\gamma}}{1 - C_4
\delta_1^{-1}\tilde{\kappa}^2(h_0)}\eta^2_h(u_h, \mathcal{T}_h)\nonumber\\
 & \leq& \xi^2\left(\|u -u_H\|_{a, \Omega}^2 +
  \frac{(1+ \delta_1) \tilde{\xi}^2 \tilde{\gamma}}{ (1+ \delta_1) \tilde{\xi}^2 +
  C_4
 \tilde{\kappa}(h_0) } \eta^2_H(u_H, \mathcal{T}_H)\right).
\end{eqnarray*}
Finally, we arrive at (\ref{error-reduction-neq1}) by using the fact
that
$$ \frac{(1+
\delta_1) \tilde{\xi}^2 \tilde{\gamma}}{ (1+ \delta_1) \tilde{\xi}^2
+ C_4
 \tilde{\kappa}(h_0) }<\gamma.$$
  This
completes the proof.
\end{proof}
\subsection{Complexity}\label{optimal-complexity}
We shall study the complexity in a class of functions defined by
\begin{eqnarray*}
\mathcal{A}_{\gamma}^s:=\{v \in H^1_0(\Omega): |v|_{s,\gamma} <
\infty \},
\end{eqnarray*}
where $\gamma>0$ is some constant,
\begin{eqnarray*}
|v|_{s,\gamma} = \sup_{\varepsilon
>0}\varepsilon \inf_{\{\mathcal{T}_k\subset \mathcal{T}_0:
\inf (\|v-v_k\|_{a,\Omega}^2 + (\gamma +1) osc^2_k(v_k,
\mathcal{T}_k))^{1/2} \leq \varepsilon\}} \big(\#\mathcal{T}_k - \#
\mathcal{T}_0\big)^s
\end{eqnarray*}
and  $\mathcal{T}_k\subset \mathcal{T}_0$ means $\mathcal{T}_k$ is a
refinement of $\mathcal{T}_0$. It is seen from the definition that,
for all $\gamma>0$, $\mathcal{A}_{\gamma}^s = \mathcal{A}_{1}^s$.
For simplicity, here and hereafter, we use $\mathcal{A}^s$ to stand
for  $\mathcal{A}_{1}^s$, and use $|v|_{s}$ to denote
 $|v|_{s, \gamma}$. So $\mathcal{A}^s$ is the class of
functions that can be approximated within a given tolerance
$\varepsilon$ by continuous piecewise polynomial functions over a
partition $\mathcal{T}_k$ with number of degrees of freedom
$\#\mathcal{T}_k-\# \mathcal{T}_0 \lc \varepsilon^{-1/s}
|v|_{s}^{1/s}$.

In order to give the proof of the  complexity of {\bf Algorithm D}
for solving (\ref{Gdis-fem}), we need some preparations. Recall that
associated with $u_k$, the solution of (\ref{Gdis-fem}) in each mesh
$\mathcal{T}_k$, $w^k=K(\ell_k u_k-Vu_k)$ satisfies
\begin{eqnarray}\label{auxiliary-boundary-eq}
 a(w^k, v) = (\ell_ku_k-Vu_k, v) ~~~~~ \forall v\in
 H^1_0(\Omega).
\end{eqnarray}
\iffalse
 By Theorem \ref{thm-error-estimator}, we may choose
$\tilde{C}_1$ and $C_1$, $\tilde{C}_1$ and $C_1$  that have the
following relationship,
\begin{eqnarray}\label{complexity-coef-eigen-bound}
  C_1 =\tilde{C}_1 (1+ \tilde{C} {\tilde r}(h_0)), ~~~ C_2 = \tilde{C}_2(1 - \tilde{C}
  {\tilde r}(h_0)).
\end{eqnarray}
\fi

Using the similar procedure as in the proof of Theorem
\ref{error-reduction}, we have
\begin{lemma}\label{complexity-general-boundary}
Let $u_k$ and $u_{k+1}$ be  discrete
 solutions of (\ref{Gdis-fem}) over a
conforming mesh $\mathcal{T}_k$ and its refinement
$\mathcal{T}_{k+1}$ with marked
 set $\mathcal{M}_k$.
Suppose that they
 satisfy the following property
 \begin{eqnarray*}
  &&\|u - u_{k+1} \|_{a,\Omega}^2 + \gamma_{\ast} osc^2_{k+1}(u_{k+1},\mathcal{T}_{k+1}) \nonumber\\
  & \leq&   \beta_{\ast}^2 \big(\|u-u_k\|_{a,\Omega}^2 + \gamma_{\ast} osc^2_k(u_k,\mathcal{T}_k)\big),
 \end{eqnarray*}
 where $ \gamma_{\ast}$ and $\beta_{\ast}$ are some positive constants.
Then for  problem (\ref{auxiliary-boundary-eq}), we have
\begin{eqnarray*}\label{lemma-complexity-general-bound-conc2}
&&  \|w^k - P_{k+1} w^k \|_{a,\Omega}^2 + \tilde{\gamma_{\ast}}
\widetilde{osc}_{k+1}^2(P_{k+1} w^k,\mathcal{T}_{k+1})\nonumber\\
   &\leq& \tilde{\beta_{\ast}}^2 \big( \|w^k -P_k w^k\|^2_{a,\Omega}
+ \tilde{\gamma_{\ast}} \widetilde{osc}^2_k(P_k
w^k,\mathcal{T}_k)\big)
\end{eqnarray*}
with
\begin{eqnarray}\label{complexity-general-boundary-beta-gamma}
\tilde{\beta_{\ast}} := \left(\frac{ (1+ \delta_1) \beta_{\ast}^2 +
C_5
 \tilde{\kappa}(h_0) }{1 - C_5 \delta_1^{-1}
 \tilde{\kappa}^2(h_0)}\right)^{1/2}, \quad \tilde{\gamma_{\ast}} :=
 \frac{\gamma_{\ast}}{1 - C_5 \delta_1^{-1}
\tilde{\kappa}^2(h_0)},
\end{eqnarray}
where $C_5 $ is some positive constant and $\delta_1 \in (0, 1)$ is
some constant as  in the proof of Theorem \ref{error-reduction}.
\end{lemma}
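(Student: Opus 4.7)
The plan is to apply Lemma \ref{lemma-bound-general} (together with Theorem \ref{thm-general-boundary}) to translate the assumed contraction between the general-problem iterates $u_k$ and $u_{k+1}$ into a contraction between the auxiliary boundary-value quantities $w^k - P_k w^k$ and $w^k - P_{k+1} w^k$ together with their oscillations. The structure of the argument parallels that of Theorem \ref{error-reduction}, with $\widetilde{osc}_h$ replacing the estimator $\tilde\eta_h$.

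First I would record the identities that directly connect the two sides of the translation. Using $u_k = P_k w^k$ from (\ref{u-w}) and Theorem \ref{thm-general-boundary} at level $k$:
\begin{eqnarray*}
\|u - u_k\|_{a,\Omega} &=& \|w^k - P_k w^k\|_{a,\Omega} + \mathcal{O}(\tilde{\kappa}(h_0))\|u - u_k\|_{a,\Omega}, \\
osc_k(u_k,\mathcal{T}_k) &=& \widetilde{osc}_k(P_k w^k, \mathcal{T}_k).
\end{eqnarray*}
At level $k+1$, applying Lemma \ref{lemma-bound-general} with coarse mesh $\mathcal{T}_k$ and refined mesh $\mathcal{T}_{k+1}$ gives
\begin{eqnarray*}
\|u - u_{k+1}\|_{a,\Omega} &=& \|w^k - P_{k+1} w^k\|_{a,\Omega} + \mathcal{O}(\tilde{\kappa}(h_0))(\|u-u_{k+1}\|_{a,\Omega} + \|u-u_k\|_{a,\Omega}), \\
osc_{k+1}(u_{k+1},\mathcal{T}_{k+1}) &=& \widetilde{osc}_{k+1}(P_{k+1} w^k, \mathcal{T}_{k+1}) + \mathcal{O}(\tilde{\kappa}(h_0))(\|u-u_{k+1}\|_{a,\Omega}+\|u-u_k\|_{a,\Omega}).
\end{eqnarray*}

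Next I would square these identities with Young's inequality $(a+b)^2 \le (1+\delta_1)a^2 + (1+\delta_1^{-1})b^2$, using the same $\delta_1 \in (0,1)$ as in Theorem \ref{error-reduction}. Substituting into the hypothesis
\[
\|u-u_{k+1}\|_{a,\Omega}^2 + \gamma_{\ast}osc_{k+1}^2(u_{k+1},\mathcal{T}_{k+1}) \le \beta_{\ast}^2\big(\|u-u_k\|_{a,\Omega}^2 + \gamma_{\ast}osc_k^2(u_k,\mathcal{T}_k)\big)
\]
yields an inequality whose leading term is $(1+\delta_1)\beta_{\ast}^2\big(\|w^k-P_kw^k\|_{a,\Omega}^2 + \gamma_{\ast}\widetilde{osc}_k^2(P_kw^k,\mathcal{T}_k)\big)$ and whose remainder consists of a term of size $\tilde{\kappa}(h_0)\|u-u_k\|_{a,\Omega}^2$ (from the cross term in the right-hand side square) and a term of size $\delta_1^{-1}\tilde{\kappa}^2(h_0)(\|u-u_k\|_{a,\Omega}^2+\|u-u_{k+1}\|_{a,\Omega}^2)$ absorbing both the left-hand side expansions and the oscillation perturbations.

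The decisive step is to re-express $\|u-u_k\|_{a,\Omega}^2$ and $\|u-u_{k+1}\|_{a,\Omega}^2$ back through Theorem \ref{thm-general-boundary} in terms of $\|w^k-P_k w^k\|_{a,\Omega}^2$ and $\|w^k-P_{k+1}w^k\|_{a,\Omega}^2$ (up to yet another factor $1 + \mathcal{O}(\tilde{\kappa}(h_0))$), move the resulting $\|w^k - P_{k+1}w^k\|_{a,\Omega}^2$ contribution to the left-hand side, and divide through by $1 - C_5\delta_1^{-1}\tilde{\kappa}^2(h_0)$. This last denominator is exactly what appears in (\ref{complexity-general-boundary-beta-gamma}), and it is positive under $h_0\ll 1$; the $C_5\tilde{\kappa}(h_0)$ summand in the numerator of $\tilde{\beta_{\ast}}^2$ is the surviving first-order cross term.

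The main obstacle is purely bookkeeping: matching constants to the exact form (\ref{complexity-general-boundary-beta-gamma}) requires splitting the $\mathcal{O}(\tilde{\kappa}(h_0))$ terms via Young's inequality in the same asymmetric way used in Theorem \ref{error-reduction} (linear $\tilde\kappa$ absorbed into the numerator, quadratic $\tilde\kappa^2$ into the denominator via absorption on the left). No new analytical ingredient is needed beyond Theorem \ref{thm-general-boundary} and Lemma \ref{lemma-bound-general}; this is why the authors merely indicate that the ``similar procedure'' as in Theorem \ref{error-reduction} applies, with $osc$ in place of $\eta$.
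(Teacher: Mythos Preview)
Your proposal is correct and follows exactly the approach the paper intends: the authors give no detailed proof for this lemma, merely stating that ``the similar procedure as in the proof of Theorem \ref{error-reduction}'' applies. Your outline reverses that procedure (from the general-problem contraction to the auxiliary boundary-value contraction, with $\widetilde{osc}$ playing the role of $\tilde\eta$), invokes the same ingredients (Theorem \ref{thm-general-boundary}, Lemma \ref{lemma-bound-general}, Young's inequality with parameter $\delta_1$, absorption of the quadratic $\tilde\kappa^2$ term on the left), and correctly identifies how the constants in (\ref{complexity-general-boundary-beta-gamma}) arise.
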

%\begin{proof}
%Recall that $w^h=K(\ell_h u_h-Vu_h)$ and $w^H=K(\ell_H u_H-Vu_H)$,
%due to (\ref{lemma-bound-general-conc-1}) and
%(\ref{lemma-bound-general-conc-2}) we have
%\begin{eqnarray*}
%\|w^H-P_hw^H\|_{a,\Omega}=\|u-u_h\|_{a,\Omega} +
%\mathcal{O}(\tilde{\kappa}(h_0))\left(\|w^H-P_Hw^H\|_{a,\Omega}+\|w^H-P_hw^H\|_{a,\Omega}
%\right)
%\end{eqnarray*}
%and
%\begin{eqnarray*}
%\widetilde{osc}_h(P_h w^H, \mathcal{T}_h) = osc_h(u_h,\mathcal{T}_h)
%+ \mathcal{O} (\tilde{\kappa}(h_0))\left(
%\|w^H-P_Hw^H\|_{a,\Omega}+\|w^H-P_hw^H\|_{a,\Omega}\right).
%\end{eqnarray*}
%Proceed the same procedure as in the proof of Theorem
%\ref{error-reduction}, we then get the desired result.
%\end{proof}
\begin{corollary}\label{optimal-marking}
Let $u_k$ and $u_{k+1}$ be as those in Lemma
\ref{complexity-general-boundary} . Suppose that they satisfy the
decrease property
\begin{eqnarray*}
  &&\|u - u_{k+1} \|_{a,\Omega}^2 + \gamma_{\ast} osc^2_{k+1}(u_{k+1},\mathcal{T}_{k+1}) \nonumber\\
  & \leq&   \beta_{\ast}^2 \big(\|u-u_k\|_{a,\Omega}^2 + \gamma_{\ast} osc^2_k(u_k,\mathcal{T}_k)\big)
 \end{eqnarray*}
with constants $\gamma_{\ast}>0$  and $\beta_{\ast}\in
(0,\sqrt{\frac{1}{2}})$. Then the set
$\mathcal{R}:=R_{\mathcal{T}_k\rightarrow \mathcal{T}_{k+1}}$
satisfies the following inequality
\begin{eqnarray*}\label{optimal-marking-neq1}
\eta_k(u_k,\mathcal{R})\geq \hat{\theta}\eta_k(u_k,\mathcal{T}_k)
\end{eqnarray*}
with $\hat{\theta}^2 =
\frac{\tilde{C}_2(1-2\tilde{\beta_{\ast}}^2)}{\tilde{C}_0 (
\tilde{C}_1 + (1 +  2 C \tilde{C}_1) \tilde{\gamma_{\ast}})}$ and
$\tilde{C}_0 = \max(1, \frac{\tilde{C}_3}{\tilde{\gamma_{\ast}}})$,
where $\tilde{\beta_{\ast}}$ and $\tilde{\gamma_{\ast}}$ are defined
in (\ref{complexity-general-boundary-beta-gamma})  with $\delta_1$
being chosen such that $\tilde{\beta_{\ast}}^2\in (0,\frac{1}{2})$.
\end{corollary}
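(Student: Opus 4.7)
The plan is to combine Lemma \ref{complexity-general-boundary} (which transfers the error/oscillation decrease property from the general problem (\ref{Gvariation}) to its associated auxiliary boundary value problem (\ref{auxiliary-boundary-eq})) with Lemma \ref{complexity-general-optimal-marking} (the optimal marking result for the typical problem). Once these are linked, the only remaining task is to identify the estimators of the general problem with those of the typical problem, which is essentially automatic since $u_k=P_k w^k$.

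First, I would apply Lemma \ref{complexity-general-boundary} to the pair $(u_k,u_{k+1})$. The hypothesis of the corollary is exactly what the lemma needs, so the conclusion transfers the decrease property verbatim to $w^k$, $P_k w^k$, $P_{k+1} w^k$ in the energy norm plus oscillation of the typical boundary value problem, with the adjusted constants $\tilde\beta_\ast$ and $\tilde\gamma_\ast$ of (\ref{complexity-general-boundary-beta-gamma}). Here I would invoke the standing assumption $h_0\ll 1$ and pick $\delta_1\in(0,1)$ small enough so that the denominator $1-C_5\delta_1^{-1}\tilde\kappa^2(h_0)$ is close to $1$ while $C_5\tilde\kappa(h_0)$ is negligible compared with $\beta_\ast^2$; since $\beta_\ast^2<1/2$, this allows one to arrange $\tilde\beta_\ast^2\in(0,1/2)$, as the corollary explicitly requests.

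Next, I would invoke Lemma \ref{complexity-general-optimal-marking} on the typical problem (\ref{variation}) whose solution is $w^k$, viewed as the boundary value problem driven by the source $\ell_k u_k-V u_k$. The two-level decrease property from the previous step is exactly the hypothesis of that lemma, with $(\tilde\beta_\ast,\tilde\gamma_\ast)$ in the roles of $(\tilde\beta_\ast,\tilde\gamma_\ast)$ there, so I conclude
\begin{eqnarray*}
\tilde\eta_k(P_k w^k,\mathcal R)\geq \hat\theta\,\tilde\eta_k(P_k w^k,\mathcal T_k)
\end{eqnarray*}
with exactly the constant $\hat\theta^2=\tilde C_2(1-2\tilde\beta_\ast^2)/[\tilde C_0(\tilde C_1+(1+2C\tilde C_1)\tilde\gamma_\ast)]$ claimed.

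Finally, I would pass from $\tilde\eta_k(P_k w^k,\cdot)$ to $\eta_k(u_k,\cdot)$. Since $u_k=P_k w^k$ by (\ref{u-w}) and since the jump term in $\tilde J_e$ and the residual term match $J_e(u_k)$ and $\mathcal R_\tau(u_k)$ element by element (using $L w^k=\ell_k u_k-V u_k$), one has $\tilde\eta_k(P_k w^k,\tau)=\eta_k(u_k,\tau)$ for every $\tau\in\mathcal T_k$, hence the identity on both $\mathcal T_k$ and $\mathcal R\subset\mathcal T_k$. Substituting yields the desired lower bound $\eta_k(u_k,\mathcal R)\geq\hat\theta\,\eta_k(u_k,\mathcal T_k)$. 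The only delicate step is the calibration of $\delta_1$ so that simultaneously $\tilde\beta_\ast^2<1/2$ and the resulting $\hat\theta$ matches the prescribed formula; this is where the assumption $\beta_\ast\in(0,\sqrt{1/2})$ and the smallness of $\tilde\kappa(h_0)$ enter crucially, but it is a quantitative tuning and not a conceptual obstacle.
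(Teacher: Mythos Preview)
Your proposal is correct and follows exactly the paper's own proof, which is the one-line combination of $u_k=P_kw^k$ with Lemma~\ref{complexity-general-boundary} and Lemma~\ref{complexity-general-optimal-marking}. One cosmetic slip: taking $\delta_1$ \emph{small} sends $\delta_1^{-1}$ up and thus pushes the denominator $1-C_5\delta_1^{-1}\tilde\kappa^2(h_0)$ \emph{away} from $1$; the correct tuning is to fix $\delta_1$ so that $(1+\delta_1)\beta_\ast^2<1/2$ and then invoke $h_0\ll1$ to make both $\tilde\kappa(h_0)$ and $\delta_1^{-1}\tilde\kappa^2(h_0)$ negligible---but as you note this is quantitative and not conceptual.
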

\begin{proof}
It is a direct consequence of combining $u_k=P_k w^k$ with Lemma
\ref{complexity-general-optimal-marking} and Lemma
\ref{complexity-general-boundary}.
\end{proof}

The key to relate the best mesh with AFEM triangulations is the fact
that procedure {\bf MARK} selects the marked set $\mathcal{M}_k$
with minimal cardinality.
%\begin{lemma}\label{complexity-upper-bound-of-dof}
\begin{lemma}
\textnormal{(Cardinality of $\mathcal{M}_k$).} Let $u\in
\mathcal{A}^s$, $\mathcal{T}_k$ be a conforming partition obtained
from $\mathcal{T}_0$, and $\theta$ satisfies $\theta\in (0,\frac{C_2
\gamma}{ C_3(C_1 + (1 + 2C C_1)\gamma )})$. Then the following
estimate is valid:
\begin{eqnarray}\label{DOF}
 \# \mathcal{M}_k \lc
 \left(\|u-u_k\|_{a,\Omega}^2 + \gamma osc^2_k(u_k, \mathcal{T}_k)\right)^{-1/2s} |u|_{s}^{1/s},
\end{eqnarray}
where the hidden constant  depends on the discrepancy between
$\theta$ and $\frac{C_2 \gamma}{C_3( C_1 + (1 + 2CC_1)\gamma) }$
with $C$ defined in Lemma \ref{complexity-general-optimal-marking}.
\end{lemma}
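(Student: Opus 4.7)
The plan is to follow the standard cardinality argument in optimal AFEM complexity analysis (as in \cite{cascon-kreuzer-nochetto-siebert-07,Dai-Xu-Zhou-08,stevenson-06a}), adapted to the perturbation framework of Lemma \ref{lemma-bound-general} and Corollary \ref{optimal-marking}. The key idea is that minimality of $\mathcal{M}_k$ in {\bf Marking Strategy $E$} lets us upper-bound $\#\mathcal{M}_k$ by the number of elements in \emph{any} refinement set that realizes D\"orfler's property for $\theta$; we will construct such a refinement from the approximation class $\mathcal{A}^s$.

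First, I would choose a small parameter $\mu \in (0,1)$, depending on the discrepancy between $\theta$ and the threshold $\frac{C_2 \gamma}{C_3(C_1+(1+2CC_1)\gamma)}$, and set $\varepsilon^2 := \mu\bigl(\|u-u_k\|_{a,\Omega}^2+\gamma\,osc_k^2(u_k,\mathcal{T}_k)\bigr)$. Since $u\in\mathcal{A}^s$, there exist a conforming refinement $\mathcal{T}_\varepsilon$ of $\mathcal{T}_0$ and a function $v_\varepsilon\in S_0^\varepsilon(\Omega)$ with
\begin{eqnarray*}
\#\mathcal{T}_\varepsilon-\#\mathcal{T}_0 \;\lc\; \varepsilon^{-1/s}|u|_s^{1/s},\qquad \|u-v_\varepsilon\|_{a,\Omega}^2+(\gamma+1)\,osc_\varepsilon^2(v_\varepsilon,\mathcal{T}_\varepsilon) \;\le\; \varepsilon^2.
\end{eqnarray*}
Form the overlay $\mathcal{T}_\ast:=\mathcal{T}_k\oplus\mathcal{T}_\varepsilon$, so that $\#\mathcal{T}_\ast-\#\mathcal{T}_k\le \#\mathcal{T}_\varepsilon-\#\mathcal{T}_0$, and let $u_\ast$ denote the solution of (\ref{Gdis-fem}) on $\mathcal{T}_\ast$.

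The main task — and the step I expect to be the principal obstacle — is to verify the hypothesis of Corollary \ref{optimal-marking} on the pair $(\mathcal{T}_k,\mathcal{T}_\ast)$, i.e.\ to produce constants $\gamma_\ast>0$ and $\beta_\ast\in(0,\sqrt{1/2})$ such that
\begin{eqnarray*}
\|u-u_\ast\|_{a,\Omega}^2+\gamma_\ast\,osc_\ast^2(u_\ast,\mathcal{T}_\ast)\;\le\;\beta_\ast^2\bigl(\|u-u_k\|_{a,\Omega}^2+\gamma_\ast\,osc_k^2(u_k,\mathcal{T}_k)\bigr).
\end{eqnarray*}
Here one cannot invoke Theorem \ref{error-reduction} directly, since $\mathcal{T}_\ast$ is not produced by {\bf Algorithm $D$}. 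Instead, via Lemma \ref{lemma-bound-general} I would pass to the auxiliary linear problem with source $\ell_k u_k-V u_k$, whose Galerkin solution $P_\ast w^k$ is a quasi-best approximation on $\mathcal{T}_\ast$. Standard overlay estimates (as in Proposition 3.3 of \cite{cascon-kreuzer-nochetto-siebert-07}) together with $\mathcal{T}_\ast\supset\mathcal{T}_\varepsilon$ and the monotonicity $osc_\ast(v_\varepsilon,\cdot)\le osc_\varepsilon(v_\varepsilon,\cdot)$ yield that $\|u-u_\ast\|_{a,\Omega}^2+\gamma_\ast osc_\ast^2(u_\ast,\mathcal{T}_\ast)\lc \varepsilon^2$. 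Choosing $\mu$ small enough guarantees that the resulting ratio $\beta_\ast^2$ lies in $(0,1/2)$, and by the relations (\ref{coef-eigen-bound}) and (\ref{complexity-general-boundary-beta-gamma}) the corresponding $\tilde\beta_\ast^2<1/2$ provided $h_0\ll 1$.

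Once this is established, Corollary \ref{optimal-marking} gives $\eta_k(u_k,\mathcal{R})\ge\hat\theta\,\eta_k(u_k,\mathcal{T}_k)$ for $\mathcal{R}:=R_{\mathcal{T}_k\to\mathcal{T}_\ast}$, with $\hat\theta^2$ arbitrarily close to $\frac{C_2\gamma}{C_3(C_1+(1+2CC_1)\gamma)}$ after shrinking $\mu$ and $h_0$. The assumption on $\theta$ then forces $\hat\theta>\theta$, so $\mathcal{R}$ satisfies the D\"orfler criterion of {\bf Marking Strategy $E$}. By the minimal cardinality of $\mathcal{M}_k$,
\begin{eqnarray*}
\#\mathcal{M}_k \;\le\; \#\mathcal{R} \;\le\; \#\mathcal{T}_\ast-\#\mathcal{T}_k \;\le\; \#\mathcal{T}_\varepsilon-\#\mathcal{T}_0 \;\lc\; \varepsilon^{-1/s}|u|_s^{1/s}.
\end{eqnarray*}
Substituting the definition of $\varepsilon$ yields (\ref{DOF}), with a hidden constant that blows up as $\theta$ approaches the threshold — exactly the dependence announced in the statement.
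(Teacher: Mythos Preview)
Your outline is correct and matches the paper's argument in all structural respects: set a tolerance $\varepsilon$ proportional to the current total error, pull a mesh $\mathcal{T}_\varepsilon$ from the approximation class, form the overlay $\mathcal{T}_\ast$, establish the decrease hypothesis of Corollary~\ref{optimal-marking} for the pair $(\mathcal{T}_k,\mathcal{T}_\ast)$ via the perturbation Lemma~\ref{lemma-bound-general}, check $\hat\theta>\theta$, and conclude by minimality of $\mathcal{M}_k$.

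The one technical deviation worth noting is your choice of auxiliary source. You pass to the linear problem with source $\ell_k u_k-V u_k$, i.e.\ to $w^k$, and invoke quasi-best approximation of $P_\ast w^k$. The paper instead uses $w^\varepsilon=K(\ell_\varepsilon u_\varepsilon-Vu_\varepsilon)$, so that $u_\varepsilon=P_\varepsilon w^\varepsilon$ exactly; this buys the Pythagoras identity $\|w^\varepsilon-P_\ast w^\varepsilon\|_{a}^2=\|w^\varepsilon-P_\varepsilon w^\varepsilon\|_{a}^2-\|P_\ast w^\varepsilon-P_\varepsilon w^\varepsilon\|_{a}^2$ between $\mathcal{T}_\varepsilon$ and $\mathcal{T}_\ast$, and makes the oscillation comparison immediate because $\widetilde{osc}_\varepsilon(P_\varepsilon w^\varepsilon,\cdot)=osc_\varepsilon(u_\varepsilon,\cdot)$ is precisely the quantity controlled by the approximation class. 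Your route via $w^k$ still works, but the step ``$osc_\ast(v_\varepsilon,\cdot)\le osc_\varepsilon(v_\varepsilon,\cdot)$ yields $osc_\ast^2(u_\ast,\mathcal{T}_\ast)\lc\varepsilon^2$'' hides an extra translation between the $\widetilde{osc}$ of the linear problem (source $\ell_k u_k-Vu_k$) and the $osc$ of the general problem on $\mathcal{T}_\varepsilon$ (source $\ell_\varepsilon v_\varepsilon-Vv_\varepsilon$); this is handled by the same residual-difference estimate as (\ref{rightterm}), but you should make that step explicit.
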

\begin{proof}
Let $\alpha, \alpha_1 \in (0,1)$ satisfy $\alpha_1\in (0,\alpha) $
and
 $$\theta < \frac{C_2 \gamma}{ C_3(C_1 + (1 + 2CC_1)\gamma)
 }(1-\alpha^2).$$
Choose $\delta_1\in (0,1)$ to satisfy
(\ref{error-reduction-neq-delta}) and
\begin{eqnarray}\label{thm-complexity-delta-cond-1}
(1+\delta_1)^2 \alpha_1^2 \leq \alpha^2,
\end{eqnarray}
which implies
\begin{eqnarray}\label{thm-complexity-delta-cond-3}
(1+\delta_1) \alpha_1^2 <1.
\end{eqnarray}

 Set
$$\varepsilon = \frac{1}{\sqrt{2}} \alpha_1 \big(\|u-u_k\|_{a,\Omega}^2 +
\gamma osc^2_k(u_k,
 \mathcal{T}_k)\big)^{1/2}$$ and let $\mathcal{T}_{\varepsilon}$ be a refinement of
 $\mathcal{T}_0$ with minimal degrees of freedom satisfying
 \begin{eqnarray}\label{complexity-optimal-neq0}
 \|u - u_{\varepsilon}\|_{a,\Omega}^2 + (\gamma + 1) osc^2_{\varepsilon}(u_{\varepsilon}, \mathcal{T}_{\varepsilon})  \leq
 \varepsilon^2.
 \end{eqnarray}
It follows from the definition of $\mathcal{A}^s$ that
  \begin{eqnarray*}\label{upper-bound-dof-neq1}
  ~~~~\#\mathcal{T}_{\varepsilon} - \# \mathcal{T}_0 \lc {\varepsilon}^{-1/s} |u|_{s}^{1/s}.
 \end{eqnarray*}
 Let $\mathcal{T}_{\ast}=\mathcal{T}_{\varepsilon}\oplus \mathcal{T}_k$ be the smallest  common refinement of
 $\mathcal{T}_k$ and $\mathcal{T}_{\varepsilon}$.
Note that $w^{\varepsilon}=K(\ell_{\varepsilon}
u_{\varepsilon}-Vu_{\varepsilon})$ satisfies
\begin{eqnarray*}\label{complexity-boundary-problem-2}
 L w^{\varepsilon} = \ell_{\varepsilon}
u_{\varepsilon}-Vu_{\varepsilon},
\end{eqnarray*}
we get from the definition of oscillation and Young's inequality
that
\begin{eqnarray*}
\widetilde{osc}^2_{\ast}(P_{\ast}w^{\varepsilon},\tau)
  &\leq&
  2\widetilde{osc}^2_{\ast}(P_{\varepsilon}w^{\varepsilon},\tau)+2C_0^2osc_{\ast}^2(\mathbf{A},\tau)\|P_{\varepsilon}w^{\varepsilon}-
  P_{\ast}w^{\varepsilon}\|^2_{1,\omega_{\tau}} ~~\forall \tau\in\mathcal
  {T}_{\ast},
\end{eqnarray*}
which together with the monotonicity property
$osc_{\ast}(\mathbf{A},\mathcal{T}_{\ast})\leq
osc_0(\mathbf{A},\mathcal{T}_0)$ yields
\begin{eqnarray*}
  \widetilde{osc}^2_{\ast}(P_{\ast}w^{\varepsilon},\mathcal{T}_{\ast})
  &\leq&
  2\widetilde{osc}^2_{\ast}(P_{\varepsilon}w^{\varepsilon},\mathcal{T}_{\ast})+2C\|P_{\varepsilon}w^{\varepsilon}-
  P_{\ast}w^{\varepsilon}\|^2_{a,\Omega},
\end{eqnarray*}
where $C=\Lambda_1 osc^2_0(\mathbf{A},\mathcal{T}_0)$. Due to the
orthogonality
\begin{eqnarray*}\label{ortho-relation}
\|w^{\varepsilon}-P_{\ast}w^{\varepsilon}\|^2_{a, \Omega} =
\|w^{\varepsilon} - P_{\varepsilon}w^{\varepsilon}\|^2_{a, \Omega}
  - \|P_{\ast}w^{\varepsilon} -
  P_{\varepsilon}w^{\varepsilon}\|_{a, \Omega}^2,
\end{eqnarray*}
we arrive at
\begin{eqnarray*}
  &&\| w^{\varepsilon} - P_{\ast}
  w^{\varepsilon}\|_{a,\Omega}^2 + \frac{1}{2 C}\widetilde{osc}^2_{\ast}(P_{\ast}w^{\varepsilon},\mathcal{T}_{\ast})\nonumber\\
%  &\leq & \|w^{\varepsilon} - P_{\varepsilon}
%  w^{\varepsilon}\|_{a,\Omega}^2+ \frac{1}{C}
%  \widetilde{osc}^2_{\ast}(P_{\varepsilon}w^{\varepsilon},\mathcal{T}_{\ast})\nonumber\\
  &\leq & \|w^{\varepsilon} - P_{\varepsilon}
  w^{\varepsilon}\|_{a,\Omega}^2+ \frac{1}{C}
  osc^2_{\varepsilon}(P_{\varepsilon}w^{\varepsilon},\mathcal{T}_{\varepsilon}).
 \end{eqnarray*}
Since (\ref{gamma-boundary}) implies $\tilde{\gamma} \leq \frac{1}{2
C}$,  we obtain that
\begin{eqnarray*}
  &&\| w^{\varepsilon} - P_{\ast}
  w^{\varepsilon}\|_{a,\Omega}^2 + \tilde{\gamma}\widetilde{osc}^2_{\ast}(P_{\ast}w^{\varepsilon},\mathcal{T}_{\ast})\nonumber\\
  &\leq &  \|w^{\varepsilon} - P_{\varepsilon} w^{\varepsilon}\|_{a,\Omega}^2
   + \frac{1}{ C}
   osc^2_{\varepsilon}(P_{\varepsilon}w^{\varepsilon},\mathcal{T}_{\varepsilon})\nonumber\\
  &\leq & \|w^{\varepsilon} - P_{\varepsilon}
  w^{\varepsilon}\|_{a,\Omega}^2+ (\tilde{\gamma} + \sigma)
  osc^2_{\varepsilon}(P_{\varepsilon}w^{\varepsilon},\mathcal{T}_{\varepsilon})
 \end{eqnarray*}
 with $\sigma = \frac{1}{C} - \tilde{\gamma} \in (0, 1) $.
Applying the similar argument in the proof of Theorem
\ref{error-reduction} when (\ref{lemma-bound-general-conc-3}) is
replaced by (\ref{lemma-bound-general-conc-2}), we then get
 \begin{eqnarray}\label{complexity-optimal-neq2}
 &&\|u - u_{\ast}\|_{a,\Omega}^2 + \gamma osc^2_{\ast}(u_{\ast},
 \mathcal{T}_{\ast})\nonumber\\
 & \leq& \alpha_0^2\left(
 \|u - u_{\varepsilon}\|_{a,\Omega}^2 + (\gamma + \sigma)osc^2_{\varepsilon}
 (P_{\varepsilon}w^{\varepsilon},\mathcal{T}_{\varepsilon})\right) \nonumber\\
&\leq &  \alpha_0^2\left(\|u - u_{\varepsilon}\|_{a,\Omega}^2
     + (\gamma + 1)
osc^2_{\varepsilon}
 (P_{\varepsilon}w^{\varepsilon},\mathcal{T}_{\varepsilon})\right),
 \end{eqnarray}
 where
 \begin{eqnarray*}
\alpha_0^2 := \frac{ (1+ \delta_1)  +  C_4
 \tilde{\kappa}(h_0) }{1 - C_4 \delta_1^{-1}
 \tilde{\kappa}^2(h_0)}
%  \quad \gamma :=  \frac{\tilde{\gamma}}{1 - C_5 \delta_1^{-1} \tilde{\kappa}(h_0)},
\end{eqnarray*}
and $C_4$ is the constant appearing in the proof of Theorem
\ref{error-reduction}. Thus, by (\ref{complexity-optimal-neq0}) and
(\ref{complexity-optimal-neq2}), it follows
\begin{eqnarray*} \|u - u_{\ast}\|_{a,\Omega}^2 + \gamma osc^2_{\ast}(u_{\ast},
 \mathcal{T}_{\ast}) \leq \check{\alpha}^2 \big(\|u-u_k\|^2_{a,\Omega} + \gamma osc^2_k(u_k,
 \mathcal{T}_k)\big)
\end{eqnarray*}
with  $\check{\alpha} = \frac{1}{\sqrt{2}} \alpha_0 \alpha_1$.  In
view of (\ref{thm-complexity-delta-cond-3}), we have
$\check{\alpha}^2\in (0,\frac{1}{2})$ when $h_0\ll 1$. Let $\mathcal
{R}:=R_{\mathcal{T}_k\rightarrow \mathcal{T}_{\ast}}$,  by Corollary
\ref{optimal-marking}, we have that
$\mathcal{T}_{\ast}$ satisfies %a $D\ddot{o}rfler$ property.
\begin{eqnarray*}
\eta_k(u_k, \mathcal {R}) \geq \check{\theta}
\eta_k(u_k,\mathcal{T}_k),
\end{eqnarray*}
where $\check{\theta}^2 =
\frac{\tilde{C}_2(1-2\hat{\alpha}^2)}{\tilde{C}_0 ( \tilde{C}_1 + (1
+ 2 C \tilde{C}_1)\hat{\gamma})}, \quad \hat{\gamma}=
\frac{\gamma}{1 - C_5 \delta_1^{-1} \tilde{\kappa}^2(h_0)}$,
$\tilde{C}_0 = \max(1, \frac{\tilde{C}_3}{\hat{\gamma}})$, and
\begin{eqnarray*}
 \hat{\alpha}^2= \frac{ (1+ \delta_1)\check{\alpha}^2  +  C_5
 \tilde{\kappa}(h_0) }{1 - C_5 \delta_1^{-1}
 \tilde{\kappa}^2(h_0)}.
 \end{eqnarray*}
It follows from the
 definition of $\gamma$ (see (\ref{gamma})) and $\tilde{\gamma}$ (see (\ref{gamma-boundary}))
  that $\hat{\gamma}<1$ and hence ${\tilde C}_0 =
\frac{\tilde{C}_3}{\hat{\gamma}}.$
 Since $h_0 \ll 1$,
  we obtain that  $\hat{\gamma}>\gamma$ and $\hat{\alpha}\in
  (0,\frac{1}{\sqrt{2}}\alpha)$ from (\ref{thm-complexity-delta-cond-1}).
It is easy to see from (\ref{coef-eigen-bound}) and
$\hat{\gamma}>\gamma$ that
\begin{eqnarray*}
 &&\check{\theta}^2=\frac{\tilde{C}_2(1-2\hat{\alpha}^2)}{\frac{\tilde{C}_3}{\hat{\gamma}}
(\tilde{C}_1 + (1 + 2 C\tilde{C}_1)\hat{\gamma} )}
 \geq \frac{\tilde{C}_2}{\tilde{C}_3(
\frac{\tilde{C}_1}{\hat{\gamma}} + 1 + 2 C\tilde{C}_1
)}(1-\alpha^2)\nonumber\\
&=&
\frac{\frac{C_2}{(1-\tilde{C}\tilde{\kappa}(h_0))^2}}{\frac{C_3}{(1-\tilde{C}\tilde{\kappa}(h_0))^2}
(\frac{C_1}{\hat{\gamma}((1+\tilde{C}\tilde{\kappa}(h_0))^2)}+1
+2C\frac{C_1}{(1+\tilde{C}\tilde{\kappa}(h_0))^2})}(1-\alpha^2)
\nonumber\\&\geq& \frac{C_2 }{C_3 ( \frac{C_1}{\gamma} + (1 + 2 C
C_1) )}(1-\alpha^2) = \frac{C_2 \gamma }{C_3 ( C_1 + (1 + 2 C
C_1)\gamma )}(1-\alpha^2)
> \theta
\end{eqnarray*}
when $h_0 \ll 1$. Thus
\begin{eqnarray*}
 \#\mathcal{M}_k &\leq&
 \#\mathcal{R}
  \leq \#\mathcal{T}_{\ast} - \#\mathcal{T}_k
\leq \#\mathcal{T}_{\varepsilon}- \#\mathcal{T}_0 \nonumber\\
 &\leq&(\frac{1}{\sqrt{2}}\alpha_1)^{-1/s}
  \left(\|u - u_k\|_{a,\Omega}^2 + \gamma osc^2_k(u_k,
 \mathcal{T}_k)\right) ^{-1/2s} |u|_{s}^{1/s},
\end{eqnarray*}
which is the desired estimate (\ref{DOF}) with an explicit
dependence on the discrepancy between $\theta$ and $\frac{C_2
\gamma}{C_3( C_1 + (1 + 2CC_1)\gamma )}$ via $\alpha_1$. This
completes the proof.
\end{proof}

As a consequence, we obtain the optimal complexity as follows.
\begin{theorem}\label{thm-optimal-complexity}
Let $u \in  \mathcal{A}^s$ and $\{u_k\}_{k\in  \mathbb{N}_0}$ be a
sequence of finite element
 solutions corresponding to a sequence of nested finite element spaces $\{{S^k_0(\Omega)}\}_{k\in  \mathbb{N}_0}$
 produced by  {\bf Algorithm $D$}.
 Then
 \begin{eqnarray*}
  \|u-u_k\|_{a,\Omega}^2 + \gamma osc^2_k(u_k, \mathcal{T}_k) \lc
  (\#\mathcal{T}_k
  -\#\mathcal{T}_0)^{-2s}|u|_s^2,
 \end{eqnarray*}
 where the hidden constant depends on the exact solution u and
 the discrepancy between
 $\theta$ and $\frac{C_2 \gamma}{C_3( C_1 + (1 + 2 C C_1)\gamma)
}$.
\end{theorem}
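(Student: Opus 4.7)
The plan is to combine three ingredients already in place: the complexity bound (\ref{complexity}) for procedure \textbf{REFINE}, the cardinality estimate (\ref{DOF}) for the marked set produced by \textbf{Marking Strategy $E$}, and the contraction (\ref{error-reduction-neq1}) from Theorem \ref{error-reduction}. A mild technical subtlety is that the contraction is stated for the quasi-error built from $\eta_j(u_j,\mathcal{T}_j)$, while (\ref{DOF}) involves the oscillation version built from $osc_j(u_j,\mathcal{T}_j)$, so these two quantities must be shown to be equivalent up to fixed constants.

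First I would chain (\ref{complexity}) with (\ref{DOF}) to obtain
\begin{eqnarray*}
\#\mathcal{T}_k-\#\mathcal{T}_0 \lc \sum_{j=0}^{k-1}\#\mathcal{M}_j \lc |u|_s^{1/s}\sum_{j=0}^{k-1}\bigl(\|u-u_j\|_{a,\Omega}^2+\gamma\, osc_j^2(u_j,\mathcal{T}_j)\bigr)^{-1/(2s)}.
\end{eqnarray*}
Next, noting that $osc_j(u_j,\mathcal{T}_j)\leq\eta_j(u_j,\mathcal{T}_j)$ supplies one direction while the lower bound (\ref{lower-bound}) supplies the other ($\gamma\eta_j^2\lc\|u-u_j\|_{a,\Omega}^2+\gamma\, osc_j^2$, with a constant depending only on $C_2,C_3,\gamma$), I would establish the two-sided equivalence of the $\eta$- and $osc$-quasi-errors. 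Iterating Theorem \ref{error-reduction} over the $k-j$ steps separating $j$ from $k$ and applying this equivalence on both sides then yields, for every $0\le j\le k$,
\begin{eqnarray*}
\|u-u_k\|_{a,\Omega}^2+\gamma\, osc_k^2(u_k,\mathcal{T}_k) \lc \xi^{2(k-j)}\bigl(\|u-u_j\|_{a,\Omega}^2+\gamma\, osc_j^2(u_j,\mathcal{T}_j)\bigr).
\end{eqnarray*}

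Finally, raising this last inequality to the power $-1/(2s)$ turns the sum in the cardinality bound into a convergent geometric series of ratio $\xi^{1/s}\in(0,1)$ whose sum is an absolute constant, leading to
\begin{eqnarray*}
\#\mathcal{T}_k-\#\mathcal{T}_0 \lc |u|_s^{1/s}\bigl(\|u-u_k\|_{a,\Omega}^2+\gamma\, osc_k^2(u_k,\mathcal{T}_k)\bigr)^{-1/(2s)},
\end{eqnarray*}
which rearranges to the desired estimate. The main obstacle is bookkeeping rather than analysis: one must check that the admissible ranges for $\theta$ in (\ref{DOF}) and in Theorem \ref{error-reduction} are compatible, and that the hypothesis $h_0\ll 1$ keeps $\tilde{\kappa}(h_0)$ small enough to make the constants $C_1,C_2,C_3,\gamma$ (and thereby the equivalence between the $\eta$- and $osc$-quasi-errors) behave uniformly through the iteration. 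All of these conditions have already been arranged in the preceding sections, so no new estimate is required.
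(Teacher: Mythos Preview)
Your proposal is correct and follows essentially the same route as the paper's proof: chain (\ref{complexity}) with (\ref{DOF}), use the lower bound (\ref{lower-bound}) together with $osc_j\le\eta_j$ to pass between the $osc$- and $\eta$-quasi-errors, iterate the contraction (\ref{error-reduction-neq1}), and sum the resulting geometric series in $\xi^{(k-j)/s}$. The only cosmetic difference is ordering: the paper converts the $osc$-quasi-error in the cardinality sum to the $\eta$-quasi-error first, applies the iterated contraction in its native $\eta$-form, and only at the very end uses $osc_k\le\eta_k$ to return to the $osc$-quasi-error, whereas you package the two directions as a single equivalence and transport the contraction to $osc$-form before summing; the ingredients and constants are identical.
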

\begin{proof}
It follows from (\ref{complexity}) and (\ref{DOF}) that
\begin{eqnarray*}
& &\#\mathcal{T}_k - \#\mathcal{T}_0 \lc \sum_{j=0}^{k-1}
\#\mathcal{M}_j \nonumber\\
 &\lc& \sum_{j=0}^{k-1}\left(\|u - u_j\|_{a,\Omega}^2 + \gamma osc^2_j(u_j,
\mathcal{T}_j)\right)^{-1/2s}|u|_{s}^{1/s}.
\end{eqnarray*}
Note that (\ref{lower-bound}) implies
\begin{eqnarray*}
\|u - u_j\|^2_{a, \Omega} + \gamma \eta^2_j(u_j, \mathcal{T}_j) \leq
\check{C} \big( \|u - u_j\|^2_{a, \Omega} + \gamma osc^2_j(u_j,
\mathcal{T}_j)\big),
\end{eqnarray*}
where $ \check{C} = \max(1 + \frac{\gamma}{C_2}, \frac{C_3}{C_2}).$
It then turns out
\begin{eqnarray*}
\#\mathcal{T}_k - \#\mathcal{T}_0 &\lc& \sum_{j=0}^{k-1}
 \left(\|u - u_j\|_{a,\Omega}^2 + \gamma \eta^2_j(u_j,
\mathcal{T}_j)\right)^{-1/2s}|u|_{s}^{1/s}.
\end{eqnarray*}
Due to  (\ref{error-reduction-neq1}), we obtain  for $0\leq j < k$
that
\begin{eqnarray*}
 \|u-u_k\|_{a,\Omega}^2 + \gamma \eta^2_k(u_k, \mathcal{T}_k)\leq
   \xi^{2(k-j)} \left( \|u-u_j\|_{a,\Omega}^2 + \gamma \eta^2_j(u_j,
   \mathcal{T}_j)\right).
\end{eqnarray*}
Consequently,
 \begin{eqnarray*}
 \#\mathcal{T}_k-\#\mathcal{T}_0
 &\lc& |u|_{s}^{1/s} \left(\|u-u_k\|_{a,\Omega}^2 + \gamma \eta^2_k(u_k, \mathcal{T}_k)\right)^{-1/2s}
 \sum_{j=0}^{k-1}\xi^{\frac{k-j}{s}} \nonumber\\
 &\lc& |u|_{s}^{1/s} \left(\|u-u_k\|_{a,\Omega}^2 + \gamma \eta^2_k(u_k, \mathcal{T}_k)\right)^{-1/2s},
\end{eqnarray*}
 the last inequality holds because of the fact $\xi<1$.

 Since $osc_k(u_k, \mathcal{T}_k) \leq  \eta_k(u_k,\mathcal{T}_k)$,
 we
 arrive at
\begin{eqnarray*}
 \#\mathcal{T}_k-\#\mathcal{T}_0
 \lc \left(\|u-u_k\|_{a,\Omega}^2 +
 \gamma osc_k^2(u_k, \mathcal{T}_k)\right)^{-1/2s}|u|_{s}^{1/s}.
\end{eqnarray*}
 This completes the proof.
\end{proof}

\section{Applications}
In this section, we provide three typical examples to show that our
general theory is quite useful.

\subsection{A nonsymmetric problem}\label{nonsymmetric}
The first example is a nonsymmetric elliptic partial differential
equation of second order. We consider the following problem: Find $u
\in H^1_0(\Omega)$ such that
\begin{eqnarray}\label{application2}
\left\{\begin{array}{rl}
 -\nabla \cdot (\mathbf{A}\nabla u)+ {\bf b} \cdot \nabla u + cu&=f \,\,\, \mbox{in} \quad \Omega,\\
u &= 0\,\,\,\mbox{on}~~\partial\Omega,
\end{array}\right.
\end{eqnarray}
where $\Omega \subset \mathbb{R}^d (d\ge 2)$ is a bounded ploytopic
domain, $\mathbf{A}:\Omega\rightarrow  \mathbb{R}^{d\times d}$ is
piecewise Lipschitz over initial triangulation $\mathcal {T}_{0}$,
for $x\in \Omega$  matrix $\mathbf{A(x)}$ is symmetric and positive
definite with smallest eigenvalue uniformly bounded away from 0,
${\bf b} \in [L^{\infty}(\Omega)]^d $ is divergence free , $c \in
L^{\infty}(\Omega),$ and $f \in L^2(\Omega)$ .

%The weak form of (\ref{application2}) is nonsymmetric due to the
%first order term ${\bf b} \cdot \nabla u $.
%In \cite{mekchay-nochetto-05} the authors imposed a quasi-orthogonality
%property instead of the Pythagoras equality relating $u,u_k,$ and
%$u_{k+1}$ to prove the convergence of AFEM while in
%\cite{morin-siebrt-veeser-08} the authors showed simultaneously the
%convergence of error and estimator   with the strict error
%reduction, and derived the convergence of the estimator by
%exploiting the (discrete) local lower but not the upper bound.
%However, there has been no work on the complexity analysis for any
%nonsymmetric elliptic problem in the literature.
%In this subsection, we will derive the convergence and optimal
%complexity of AFEM for the nonsymmetric problem  from the
%relationship between the nonsymmetric problem and the typical
%problem.

A finite element discretization of (\ref{application2}) reads: Find
$u_h \in S^h_0(\Omega)$ satisfying
\begin{eqnarray}\label{variation-2}
(\mathbf{A}\nabla u_h,\nabla v)+({\bf b}\cdot \nabla
u_h,v)+(cu_h,v)= (f,v)~~~~~\forall v \in S^h_0(\Omega).
\end{eqnarray}
It is seen that (\ref{variation-2}) is a special case of
(\ref{Gdis-fem}), in which $Vu:={\bf b} \cdot \nabla u + cu$ and
$\ell u=\ell_h u_h=f$.  Consequently, $\kappa_1(h)=0$,
$w^h=K(f-Vu_h)$ and
\begin{eqnarray*}
u-w^h = KV(u_h-u) = KV(I-P_h)(u_h-u).
\end{eqnarray*}

 Obviously, $V:H^1_0(\Omega)\rightarrow L^2(\Omega)$ is a
linear bounded operator and $KV$ is a compact operator over
$H^1_0(\Omega)$. We have the conclusion of Theorem
\ref{thm-general-boundary}.

In this application, the element residual and jump residual become
\begin{eqnarray*}
  \mathcal{R}_{\tau}(u_h) &:=& f -{\bf b} \cdot \nabla u_h - cu_h+\nabla \cdot (\mathbf{A}\nabla u_h)~~~~ \mbox{in}~ \tau\in
  \mathcal{T}_h,\\
  J_e(u_h) &:=&[[\mathbf{A}\nabla u_h]]_e \cdot \nu_e  ~~~~~~~~~~~~~~~~~~~ \mbox{on}~ e\in
  \mathcal{E}_h
\end{eqnarray*}
while the corresponding error estimator $\eta_h(u_h, \mathcal{T}_h)$
and the oscillation $osc_h(u_h,\mathcal{T}_h)$ are defined by
(\ref{Gerror-estimator}) and (\ref{Goscilliation}), respectively.
Thus Theorem \ref{error-reduction} and Theorem
\ref{thm-optimal-complexity} ensure the convergence and optimal
complexity of AFEM for nonsymmetric problem (\ref{application2}).

\subsection{A nonlinear problem}\label{nonlinear}
In this subsection, we derive the convergence and optimal complexity
of AFEM for a nonlinear problem from our general theory.

Consider the following nonlinear problem: Find $u \in H^1_0(\Omega)$
such that
 \begin{eqnarray}\label{application3}
\left\{\begin{array}{rl}
 \mathcal {L}u:=-\Delta u+f(x ,u)&=0 \,\,\, \mbox{in} \quad \Omega,\\
u &= 0\,\,\,\mbox{on}~~\partial\Omega,
\end{array}\right.
\end{eqnarray}
where $f(x,y)$ is a smooth function on
$\mathbb{R}^3\times\mathbb{R}^1$.
% namely,
%\begin{eqnarray}\label{property-f}
%|f(\cdot,u)-f(\cdot,v)| \lc |u-v|\,\,\, \forall u,v\in L^2(\Omega).
%\end{eqnarray}

 For convenience, we shall drop the dependence of variable $x$ in
$f(x,u)$ in the following exposition. We  assume that $u\in
H^1_0(\Omega)\cap H^{1+s}(\Omega)$ for some $s\in (0,1]$.
 For any $w\in
H^1_0(\Omega)\cap H^{1+s}(\Omega)$, the linearized operator
$\mathcal {L}'_w$ at $w$ (namely, the Fr{\' e}chet derivative of
$\mathcal {L}$ at $w$) is then given by
\begin{eqnarray*} \mathcal {L}'_w=-\Delta + f'(w).
\end{eqnarray*}
We assume that $\mathcal {L}'_w:H^1_0(\Omega)\rightarrow
H^{-1}(\Omega)$ is an isomorphism. As a result, $u\in
H^1_0(\Omega)\cap H^{1+s}(\Omega)$ must be an isolated solution of
(\ref{application3}). The associated finite element scheme for
(\ref{application3}) reads: Find $u_h\in S^h_0(\Omega)$ satisfying
\begin{eqnarray}\label{variation-3}
(\nabla u_h,\nabla v)+(f(u_h),v)=0~~~~\forall v\in S^h_0(\Omega).
\end{eqnarray}
 Let $a(\cdot ,\cdot )=(\nabla \cdot,\nabla \cdot)$, $K=(-\Delta)^{-1}: L^2(\Omega)\rightarrow H^1_0(\Omega)$,
 $V=0$ and $\ell_h w = -f(w)$ for  any $w\in S^h_0(\Omega) $, then (\ref{variation-3}) becomes
 (\ref{Gdis-fem}).

As usual, to analyze the finite element approximation of nonlinear
problem (\ref{variation-3}), we require  mesh $\mathcal {T}_h$ to
satisfy that there exists $\varsigma\ge 1$ such that (c.f.
\cite{xu-zhou-01})
\begin{eqnarray*}
h^{\varsigma}\lc h(x) ~~ x\in\Omega,
\end{eqnarray*}
where $h(x)$ is the diameter $h_{\tau}$ of the element $\tau$
containing $x$. We consider the case of that $S^h_0(\Omega)$ is the
conforming piecewise linear finite element space associated with
$\mathcal {T}_h$. We assume that $\varsigma<2s$. Thus we can choose
$p\in (3, 6\varsigma/(3\varsigma-2s)]$ and obtain from Theorem 3.1
and Theorem 3.2 of \cite{xu-zhou-01} that
\begin{lemma}\label{fem-nonlinear-lemma}
If $h\ll 1$, then
\begin{eqnarray*}
\|u-u_h\|_{1,\Omega}+h^s\|u_h\|_{0,\infty,\Omega}\lc h^s
%\end{eqnarray*}
%\begin{eqnarray*}
%~~\|u_h\|_{0,\infty,\Omega}\lc 1,
\end{eqnarray*}
and
\begin{eqnarray*}
\|u-u_h\|_{0,\Omega}\lc r(h)\|u-u_h\|_{1,\Omega},
\end{eqnarray*}
where  $r(h)\rightarrow 0$ as $h\rightarrow 0$.
\end{lemma}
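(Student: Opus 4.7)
The plan is to treat the nonlinear scheme (\ref{variation-3}) as a perturbation of its linearization at the exact solution $u$, for which $\mathcal{L}'_u = -\Delta + f'(u)$ is an isomorphism by hypothesis. Concretely, I would introduce the auxiliary linearized Galerkin projection $w_h \in S^h_0(\Omega)$ defined by
\begin{eqnarray*}
  (\nabla(u-w_h),\nabla v) + (f'(u)(u-w_h),v) = 0 \qquad \forall v\in S^h_0(\Omega).
\end{eqnarray*}
For $h\ll 1$ this is well-posed by Schatz's argument (the form satisfies a G{\aa}rding inequality, the continuous problem is an isomorphism, and $L^2$-compactness of $K$ transfers the discrete inf-sup to the subspace). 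Standard estimates then give $\|u-w_h\|_{1,\Omega}\lc h^s$ from $u\in H^{1+s}(\Omega)$, while an Aubin--Nitsche duality together with $\rho_{_{\Omega}}(h)\to 0$ yields $\|u-w_h\|_{0,\Omega}\lc r_0(h)\,\|u-w_h\|_{1,\Omega}$ with $r_0(h)\to 0$.

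Next, to produce $u_h$ near $w_h$ and prove the first estimate, I would set up a map $T:B_h\to S^h_0(\Omega)$ on the ball $B_h=\{v\in S^h_0(\Omega):\|v-w_h\|_{1,\Omega}\le h^s\}$, where $Tv$ solves
\begin{eqnarray*}
  (\nabla Tv,\nabla\phi)+(f'(u)Tv,\phi)=(f'(u)v-f(v),\phi)\qquad\forall\phi\in S^h_0(\Omega).
\end{eqnarray*}
The Taylor expansion $f(v)-f(u)-f'(u)(v-u)=O(|v-u|^2)$ combined with an a priori $L^\infty$-bound on $v\in B_h$, obtained via the inverse inequality $\|v-w_h\|_{0,\infty,\Omega}\lc h^{-d/p}\|v-w_h\|_{0,p,\Omega}$ and Sobolev embedding (this is precisely where the choice $p\in(3,6\varsigma/(3\varsigma-2s)]$ and the mesh assumption $h^\varsigma\lc h(x)$ enter, to cancel the negative power of $h$), makes $T$ a contraction on $B_h$ once $h\ll 1$. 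Its unique fixed point coincides with the solution $u_h$ of (\ref{variation-3}) and satisfies $\|u_h-w_h\|_{1,\Omega}\lc h^{2s}$, so by the triangle inequality $\|u-u_h\|_{1,\Omega}\lc h^s$; the same $L^\infty$-bound inherited on $B_h$ gives $\|u_h\|_{0,\infty,\Omega}\lc 1$, and hence $h^s\|u_h\|_{0,\infty,\Omega}\lc h^s$.

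Finally, for the $L^2$-estimate I would apply duality. Let $\psi\in H^1_0(\Omega)$ solve $(\mathcal{L}'_u)^*\psi=u-u_h$ and let $\psi_h\in S^h_0(\Omega)$ be its linearized Galerkin projection. Testing the difference of (\ref{application3}) and (\ref{variation-3}) against $\psi$ and $\psi_h$ and expanding $f(u)-f(u_h)=f'(u)(u-u_h)+O(|u-u_h|^2)$ yields
\begin{eqnarray*}
  \|u-u_h\|_{0,\Omega}^2\lc \|u-u_h\|_{1,\Omega}\,\|\psi-\psi_h\|_{1,\Omega}+\|u-u_h\|_{1,\Omega}^2\,\|\psi\|_{1,\Omega}.
\end{eqnarray*}
Since $(\mathcal{L}'_u)^{-*}:L^2(\Omega)\to H^1_0(\Omega)$ is compact, Lemma \ref{lemma1} gives $\|\psi-\psi_h\|_{1,\Omega}=o(1)\|u-u_h\|_{0,\Omega}$; combined with the already-established $\|u-u_h\|_{1,\Omega}\lc h^s\to 0$, this yields $\|u-u_h\|_{0,\Omega}\lc r(h)\|u-u_h\|_{1,\Omega}$ with $r(h)\to 0$. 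The main obstacle throughout is controlling the quadratic remainder $O(|u-u_h|^2)$ in the $H^{-1}$-pairing uniformly in $h$ — which is exactly what the uniform $L^\infty$-bound on $u_h$ obtained in the contraction step supplies.
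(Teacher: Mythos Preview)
The paper does not actually prove this lemma: it simply invokes Theorems~3.1 and~3.2 of Xu--Zhou \cite{xu-zhou-01} after recording the mesh hypothesis $h^{\varsigma}\lc h(x)$ and the admissible range for $p$. Your proposal therefore supplies considerably more than the paper does, and the line of argument you outline --- linearize at $u$ via $\mathcal{L}'_u$, obtain $w_h$ by a Schatz-type argument, build $u_h$ by a Banach fixed point on a ball around $w_h$, and finish the $L^2$-estimate by Aubin--Nitsche duality with the quadratic Taylor remainder controlled through the uniform $L^\infty$-bound --- is precisely the machinery that underlies the cited theorems in \cite{xu-zhou-01}. So your approach and the paper's (by reference) agree in substance.

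One small correction: your appeal to Lemma~\ref{lemma1} for $\|\psi-\psi_h\|_{1,\Omega}=o(1)\,\|u-u_h\|_{0,\Omega}$ is not quite right as stated. Lemma~\ref{lemma1} concerns $\mathcal{K}(I-P_h)$ with the compact operator on the \emph{left}, whereas what you need is a bound on $(I-P_h^{\mathrm{lin}})(\mathcal{L}'_u)^{-1}$, i.e.\ the compact operator on the \emph{right}. The conclusion you want is still true --- it is exactly the statement that the analogue of $\rho_{_{\Omega}}(h)$ for the linearized operator tends to zero, which follows from density of smooth functions and compactness of $(\mathcal{L}'_u)^{-1}:L^2(\Omega)\to H^1_0(\Omega)$ --- but it is not Lemma~\ref{lemma1}. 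With that adjustment your duality step goes through, and the remainder term $\|u-u_h\|_{1,\Omega}^2\,\|\psi\|_{1,\Omega}\lc \|u-u_h\|_{1,\Omega}^2\,\|u-u_h\|_{0,\Omega}$ is absorbed after dividing by $\|u-u_h\|_{0,\Omega}$ and using the already-established $\|u-u_h\|_{1,\Omega}\lc h^s\to 0$.
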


Now we shall show that Theorem 3.1 is applicable for
(\ref{application3}). Since $K$ is monotone and $f(x,y)$ is smooth,
we have from Lemma \ref{fem-nonlinear-lemma} that
\begin{eqnarray*}
& &\|K(f(u)-f(u_h))\|_{a,\Omega} \lc \|K(u-u_h)\|_{a,\Omega}
\nonumber\\&\lc & \|u-u_h\|_{0,\Omega}  \lc
r(h)\|u-u_h\|_{a,\Omega}.
\end{eqnarray*}
Therefore we have (\ref{general-boundary-neq}) when we choose
$\kappa_1(h)=r(h)$  and $\kappa_2(h)=0$.

In this application,  the element residual and jump residual become:
\begin{eqnarray*}
  \mathcal{R}_{\tau}(u_h) &:=& -f(u_h)+\Delta u_h  \qquad \mbox{in}~ \tau\in
  \mathcal{T}_h,\\
  J_e(u_h) &:=&  -\nabla u_h^{+}\cdot \nu^{+} -  \nabla u_h^{-}\cdot
  \nu^{-} := [[\nabla u_h]]_e \cdot \nu_e  ~~~~ \mbox{on}~ e\in
  \mathcal{E}_h
\end{eqnarray*}
and the corresponding error estimator $\eta_h(u_h, \mathcal{T}_h)$
and the oscillation $osc_h(u_h,\mathcal{T}_h)$ are defined by
(\ref{Gerror-estimator}) and (\ref{Goscilliation}), respectively.
Then Theorem \ref{error-reduction} and Theorem
\ref{thm-optimal-complexity} ensure the convergence and  optimal
complexity of AFEM for  nonlinear problem (\ref{application3}).

\subsection{An unbounded coefficient problem}\label{nonsmooth} Finally,
we investigate a nonlinear eigenvalue problem, of which a
coefficient is unbounded. It is known that electronic structure
computations require solving the following Kohn-Sham equations
\cite{Beck-00,gong-shen-zhang-zhou-08,Kohn-Sham-65}
\begin{eqnarray}\label{KS}
\left(-\frac{1}{2}\Delta-\sum_{j=1}^{N_{atom}}\frac{Z_j}{|x-r_j|}+\int_{\mathbb{R}^3}\frac{\rho(y)}{|x-y|}d
y+V_{xc}(\rho)\right)u_i=\lambda_i u_i ~~in \quad \mathbb{R}^3,
\end{eqnarray}
where $N_{atom}$ is the total number of atoms in the system, $Z_j$
is the valance charge of this ion (nucleus plus core electrons),
$r_j$ is the position of the $j$-th atom $(j=1,\cdots ,N_{atom})$,
$$
\rho=\sum_{i=1}^{N_{occ}}c_i|u_i|^2
$$
with $u_i$ the $i$-th smallest eigenfunction, $c_i$ the number of
electrons on the i-th orbit, and $N_{occ}$ the total number of the
occupied orbits.
%Such nonlinear equation (\ref{KS}) is a key model
%in modern physics, materials science and quantum chemistry
%\cite{Beck-00}.
The central computation in solving the Kohn-Sham
equation is the repeated solution of the following eigenvalue
problem: Find $(\lambda,u)\in \mathbb{R}\times H^1_0(\Omega)$ such
that
\begin{eqnarray}\label{application1}
\left\{\begin{array}{rl}
 -\frac{1}{2}\Delta u+ Vu &= \lambda u \quad \mbox{in}~~ \Omega,\\
\|u\|_{0,\Omega} &= 1,
\end{array}\right.
\end{eqnarray}
where $\Omega$ is a bounded domain in $\mathbb{R}^3$, $V=V_{ne}+V_0$
is the so-called effective potential. Here, $V_0 \in
L^{\infty}(\Omega)$ and
\begin{eqnarray*}
V_{ne}(x)=-\sum_{j=1}^{N_{atom}}\frac{Z_j}{|x-r_j|}.
\end{eqnarray*}

%In most applications, a number of eigenpairs are desired and the
%worst thing is that the self-consistent iteration is not so easy to
%converge and it often takes several tens of steps. Consequently, it
%is very important to improve the accuracy and reduce the
%computational cost in solving the eigenvalue problem at each
%iteration step. A successful approach is applying the adaptive
%finite element scheme to solve such kind of eigenvalue problems.

 A finite element discretization of
(\ref{application1}) reads: Find $(\lambda_h,u_h)\in
\mathbb{R}\times S^h_0(\Omega)$ such that
\begin{eqnarray}\label{variation-1}
\frac{1}{2}(\nabla u_h,\nabla
v)+(Vu_h,v)=\lambda_h(u_h,v)~~~~~\forall v \in S^h_0(\Omega).
\end{eqnarray}
Let $\ell_h : S^h_0(\Omega)\rightarrow L^2(\Omega)$ be defined by
\begin{eqnarray*}
\ell_h v=\lambda_h v ~~~~\forall v \in S^h_0(\Omega),
\end{eqnarray*}
then (\ref{variation-1}) is a special case of (\ref{Gdis-fem})
 when $a(\cdot,\cdot)=\frac{1}{2}(\nabla \cdot, \nabla \cdot)$ and $K=\frac{1}{2}(-\Delta)^{-1}:
L^2(\Omega)\rightarrow H^1_0(\Omega)$.

Using the uncertainty principle lemma (see, e.g.,
\cite{Reed-Simon-75})
\begin{eqnarray*}
\int_{\mathbb{R}^3} \frac{w^2(x)}{|x|^2}\leq
4\int_{\mathbb{R}^3}|\nabla w|^2~~~~\forall w \in
C^{\infty}_{0}(\mathbb{R}^3)
\end{eqnarray*}
and the fact that $C^{\infty}_0(\Omega)$ is dense in
$H^1_0(\Omega)$, we obtain
\begin{eqnarray*}
\int_{\Omega} \frac{w^2(x)}{|x|^2}\leq 4\int_{\Omega}|\nabla
w|^2~~~~\forall w \in H^1_0(\Omega).
\end{eqnarray*}
Then for any $w \in H^1_0(\Omega)$, we have
\begin{eqnarray*}
\|V_{ne} w+V_0w\|_{0,\Omega}&\leq& C\|w\|_{1,\Omega},
\end{eqnarray*}
namely, $V$ is a bounded operator over $H^1_0(\Omega)$. Thus $KV$ is
a compact operator over $H^1_0(\Omega)$.

We consider the case of that $(\lambda,u)\in \mathbb{R}\times
H^1_0(\Omega)$ is some simple eigenpair of (\ref{application1}) with
$\|u\|_{0,\Omega}=1$. Note that for $\ell v:=\lambda v ~~\forall
v\in H^1_0(\Omega)$, there holds
\begin{eqnarray*}
K(\ell u-\ell_h u_h) = \lambda K(u-u_h)+(\lambda-\lambda_h)Ku_h.
\end{eqnarray*}
So if $(\lambda_h,u_h)\in \mathbb{R}\times S^h_0(\Omega)$ is the
associated finite element  eigenpair of (\ref{variation-1}) with
$\|u_h\|_{0,\Omega}=1$ that satisfy $$
\|u-u_h\|_{0,\Omega}+|\lambda-\lambda_h|\lc
\kappa_1(h)\|u-u_h\|_{a,\Omega}, $$ we then have (c.f.
\cite{Dai-Xu-Zhou-08})
\begin{eqnarray*}
 \| K(\ell u-\ell_h u_h)\|_{a,\Omega}
 = O(\kappa_1(h))\|u - u_h\|_{a,\Omega},
 \end{eqnarray*}
where $\kappa_1(h):= \rho_{_{\Omega}}(h)+\|u-u_h\|_{a,\Omega}$
satisfying $\kappa_1(h)\rightarrow 0$ as $h\rightarrow 0$.

In this application,  the element residual and jump residual become:
\begin{eqnarray*}
  \mathcal{R}_{\tau}(u_h) &:=&\lambda_hu_h-Vu_h +\frac{1}{2}\Delta u_h~~~~ \mbox{in}~ \tau\in
  \mathcal{T}_h,\\
  J_e(u_h) &:=& [[\frac{1}{2}\nabla u_h]]_e \cdot \nu_e  ~~~~~~~~~~~~~~ \mbox{on}~ e\in
  \mathcal{E}_h
\end{eqnarray*}
and the corresponding error estimator $\eta_h(u_h, \mathcal{T}_h)$
and the oscillation $osc_h(u_h,\mathcal{T}_h)$ are defined by
(\ref{Gerror-estimator}) and (\ref{Goscilliation}), respectively.
Then Theorem \ref{error-reduction} and Theorem
\ref{thm-optimal-complexity}  ensure the convergence and optimal
complexity of AFEM for  unbounded coefficient problem
(\ref{application1}) (c.f. \cite{Dai-Xu-Zhou-08}).

\section{Numerical examples}
In this section we will report some numerical results to illustrate
our theory. Our numerical results were carried out on LSSC-II in the
State Key Laboratory of Scientific and Engineering Computing,
Chinese Academy of Sciences, and our codes were based on the toolbox
PHG of the State Key Laboratory of Scientific and Engineering
Computing, Chinese Academy of Sciences.

%{\bf Example 1} Consider the Schr{\" o}dinger equation for hydrogen
%atoms:
%\begin{eqnarray}\Label{atom-H}
%\left(-\frac {1}{2}\Delta  -\frac {1}{|x|}\right) u = \lambda u
%~~\mbox{in}~ \mathbb{R}^3
%\end{eqnarray}
%with $\displaystyle\int_{\mathbb{R}^3}|u|^2dx=1$. The eigenvalues of
%(\ref{atom-H}) are $\lambda_n = -\frac{1}{2 n^2} $ and the multiple
%of $\lambda_n$ is $n^2 (n= 1, 2, \cdots) $.
%Instead of (\ref{atom-H}), we may solve the following eigenvalue
%problem: Find $(\lambda,u)\in \mathbb{R}\times H^1_0(\Omega)$ such
%that $\displaystyle\int_{\Omega}|u|^2dx=1$ and
%\begin{equation}\label{atomH-numer}
%\left\{\begin{array}{rcl} \displaystyle \left(-\frac {1}{2}\Delta -
%\frac{1}{|x|}\right) u &=& \lambda u ~~\mbox{in} ~\Omega, \\
%u &=& 0 ~~ \mbox{on}~\partial\Omega,
%\end{array}\right.
%\end{equation}
%where $\Omega$ is some bounded domain in $\mathbb{R}^3$.

{\bf Example 1}. We consider (\ref{application2}) when the
homogenous Dirichlet boundary condition is replaced by $u = g$ on
$\partial \Omega$ and  $\Omega = (0,1)^3$  with the isotropic
diffusion coefficient $\mathbf{A} = \epsilon I$, $\epsilon =
10^{-2}$, convection velocity $\mathbf{b} = (2,3,4)$, and c = 0
(c.f. \cite{Knobloch-Tobiska-03} for a 2D case and Remark
\ref{remark-boundary}). The exact solution is given by
\begin{eqnarray*}
u=\left(x^3-\exp\big(\frac{2(x-1)}{\epsilon}\big)\right)
\left(y^2-\exp\big(\frac{3(y-1)}{\epsilon}\big)\right)
\left(z-\exp\big(\frac{4(z-1)}{\epsilon}\big)\right).
\end{eqnarray*}
For small $\epsilon >0$ the solution has the typical layer behavior
in the neighbourhood of $x=1$, $y=1$, $z=1$, respectively.
% Away from
%the layer the solution can be approximated by the solution of the
%reduced problem $u_{red}(x,y,z) = x^3 y^2 z$.
%The boundary
%conditions at the outflow part are
%\begin{eqnarray*}
%u(x,y,1) = u(x,1,z) = u(1,y,z) = 0.
%\end{eqnarray*}
%The boundary conditions at the inflow part are almost zero, for
%example
%\begin{eqnarray*}
%u(0,y,z) = -\exp \left(
%\frac{-2}{\epsilon}\right)\left(y^2-\exp\big(\frac{3(y-1)}{\epsilon}\big)\right)
%\left(z-\exp\big(\frac{4(z-1)}{\epsilon}\big)\right),
%\end{eqnarray*}
%that is,
 The Dirichlet boundary condition $g(x,y,z)$ on $\partial
\Omega$ is given by
\begin{equation*}\label{boundary}
g(x,y,z)=
\left\{\begin{array}{rcl} \displaystyle 0  ~~~~~~~~ x=1~~ or~~ y=1 ~~or~~ z=1, \\
u(x,y,z) ~~~~ x=0~~ or~~ y=0~~ or~~ z=0.
\end{array}\right.
\end{equation*}

\begin{figure}[htbp]
\begin{center}
\setlength{\unitlength}{1cm}
\begin{minipage}[t]{5.0cm}
\begin{picture}(5.0,5.0)\resizebox*{5cm}{5cm}
{\includegraphics{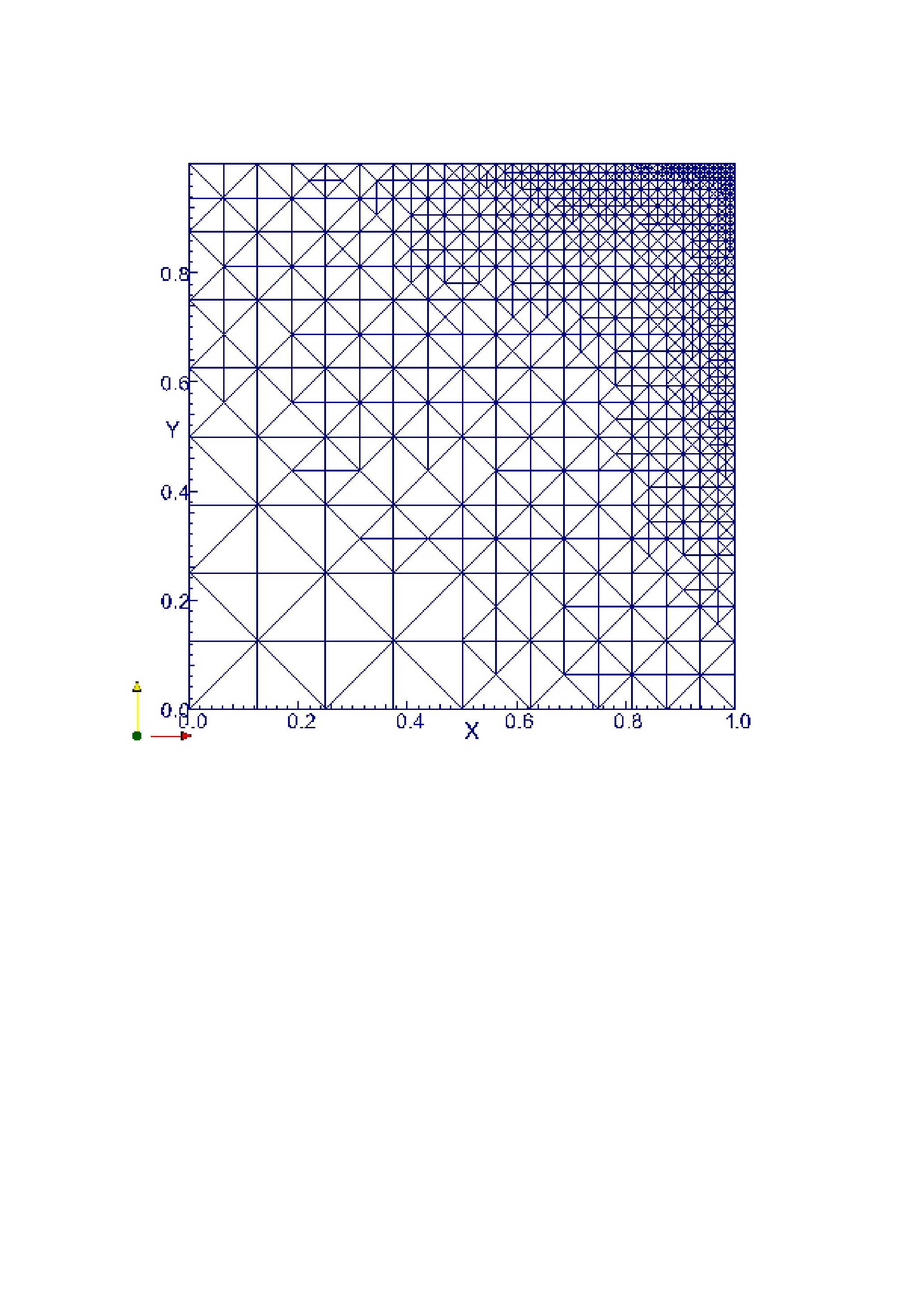}}\end{picture}
\begin{center} Z=0.0 \end{center}\caption{The cross-section of an
adaptive mesh of {\bf Example 1} using linear finite
elements}\label{general_P1_mesh}
\end{minipage}
\hfill
\begin{minipage}[t]{5.0cm}
\begin{picture}(5.0,5.0)\resizebox*{5cm}{5cm}
{\includegraphics{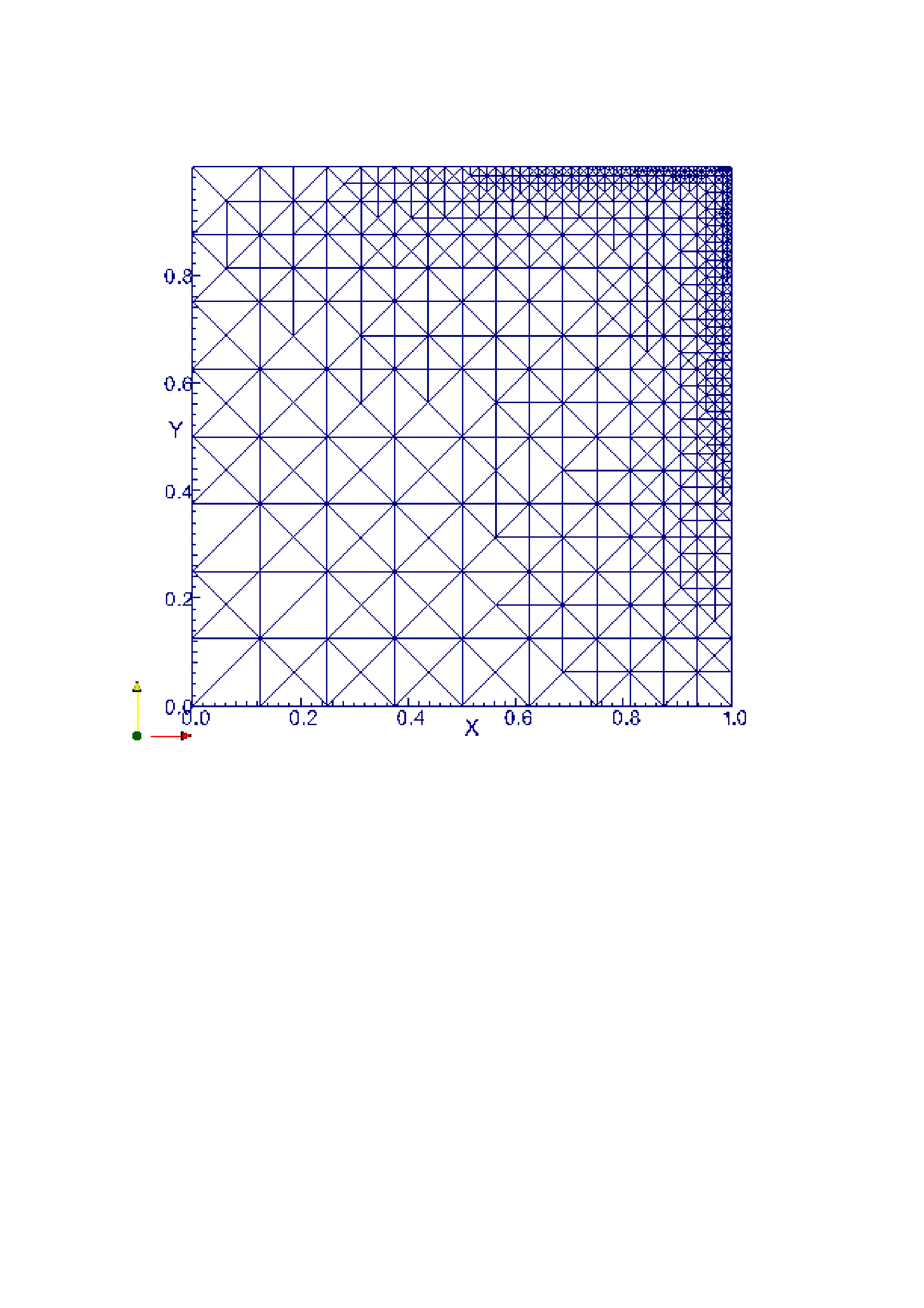}}\end{picture}
\begin{center} Z=0.0 \end{center}\caption{The  cross-section of an
adaptive mesh of {\bf Example 1} using quadratic finite
elements}\label{general_P2_mesh}
\end{minipage}
\end{center}
\end{figure}

\begin{figure}[htbp]
\begin{center}
\setlength{\unitlength}{1cm}
\begin{minipage}[t]{5.0cm}
\begin{picture}(5.0,5.0) \resizebox*{5cm}{5cm}{\includegraphics{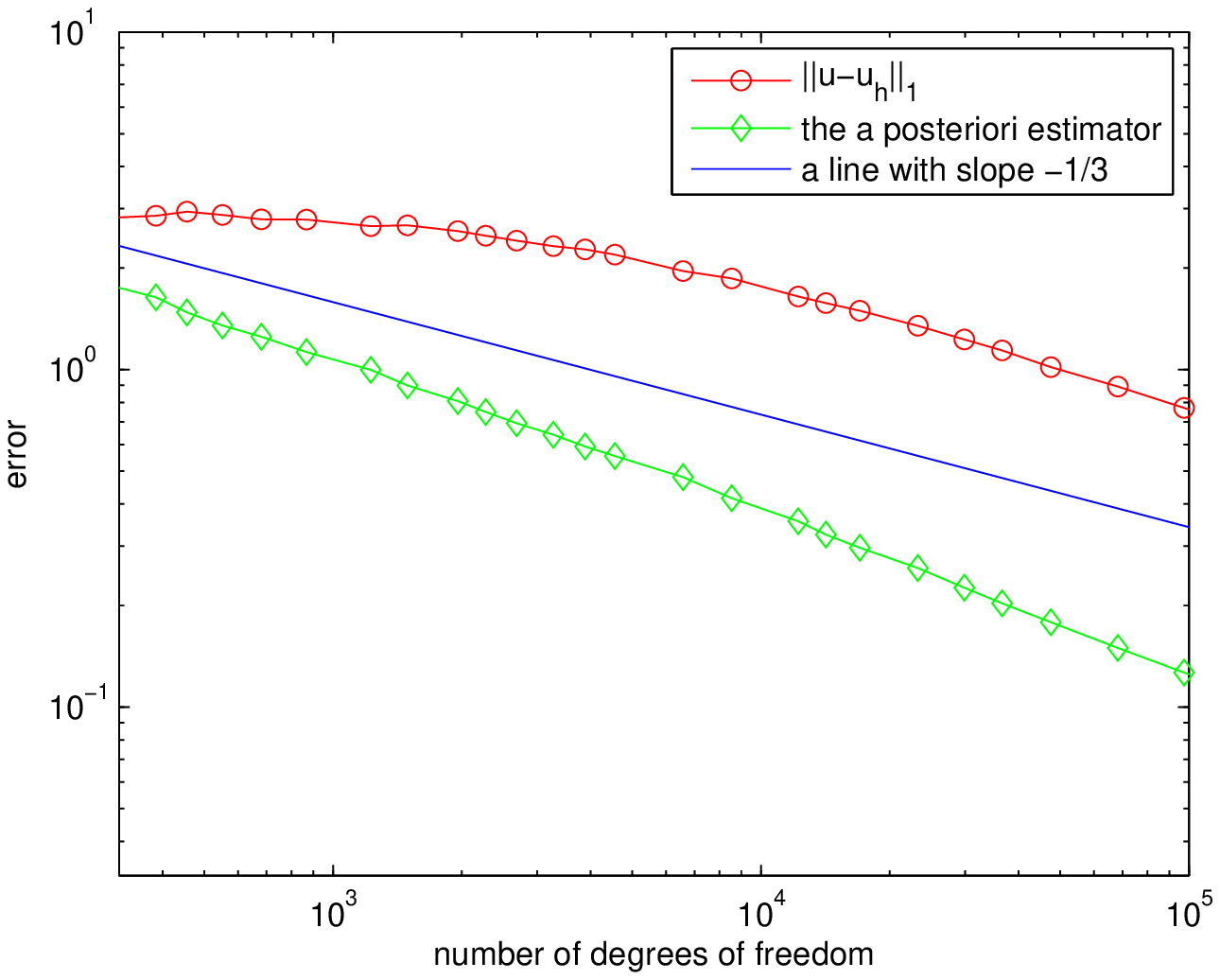}}\end{picture}
 \caption{The convergence curves
  %of $\|u - u_h\|_1$ and the a posteriori error estimator
% $\eta_{h}(u_h, \mathcal{T}_h)$
  of {\bf Example 1}
 using linear finite elements}\label{general_P1}
\end{minipage}
\hfill
\begin{minipage}[t]{5.0cm}
\begin{picture}(5.0,5.0)\resizebox*{5cm}{5cm} {\includegraphics{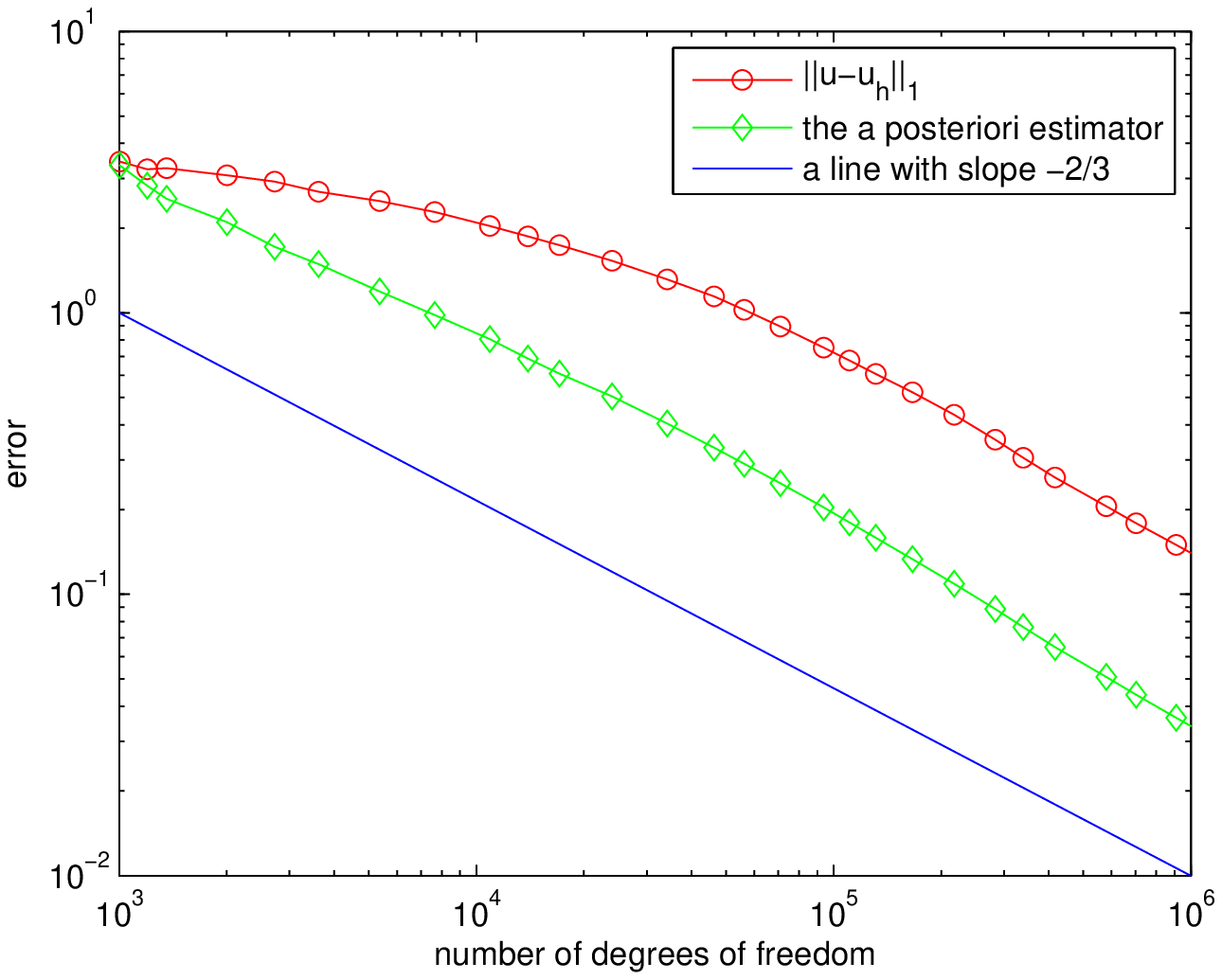}}\end{picture}
 \caption{The convergence curves
 %of $\|u - u_h\|_1$ and the a posteriori error estimator $\eta_{h}(u_h, \mathcal{T}_h)$
  of {\bf Example 1}
 using quadratic finite elements }\label{general_P2}
\end{minipage}
\end{center}
\end{figure}

Some adaptively refined meshes
%constructed by the error indicator $\eta_h(u_h, T)$
are displayed in Fig. \ref{general_P1_mesh} and Fig.
\ref{general_P2_mesh}. Our numerical results are presented in Fig.
\ref{general_P1} and Fig. \ref{general_P2}. It is shown from Fig.
\ref{general_P2} that  $\|u - u_h\|_1$ is proportional to the a
posteriori error estimators, which indicates the efficiency of the a
posteriori error estimators given in section \ref{nonsymmetric}.
Besides, it is also seen from Fig. \ref{general_P1} and Fig.
\ref{general_P2}  that, by using linear finite elements and
quadratic finite elements, the convergence curves of  errors are
approximately parallel to the line with slope $-1/3$ and the line
with slope $-2/3$, respectively. These mean that the approximation
error of the exact solution has optimal convergence rate, which
coincides with our theory in section \ref{section-convergence}.

{\bf Example 2}. Consider the following nonlinear problem:
\begin{eqnarray*}\label{example3}
\left\{\begin{array}{rl}
 -\Delta u+ u^3&=f \,\,\, \mbox{in} \quad \Omega,\\
u &= 0\,\,\,\mbox{on}~~\partial\Omega,
\end{array}\right.
\end{eqnarray*}
where $\Omega = (0,1)^3$. The exact solution is given by $u=\sin(\pi
x_1)\sin(\pi x_2)\sin(\pi x_3)/(x_1^2+x_2^2+x_3^2)^{1/2}$.

\begin{figure}[htbp]
\begin{center}
\setlength{\unitlength}{1cm}
\begin{minipage}[t]{5.0cm}
\begin{picture}(5.0,5.0) \resizebox*{5.0cm}{5.0cm}{\includegraphics{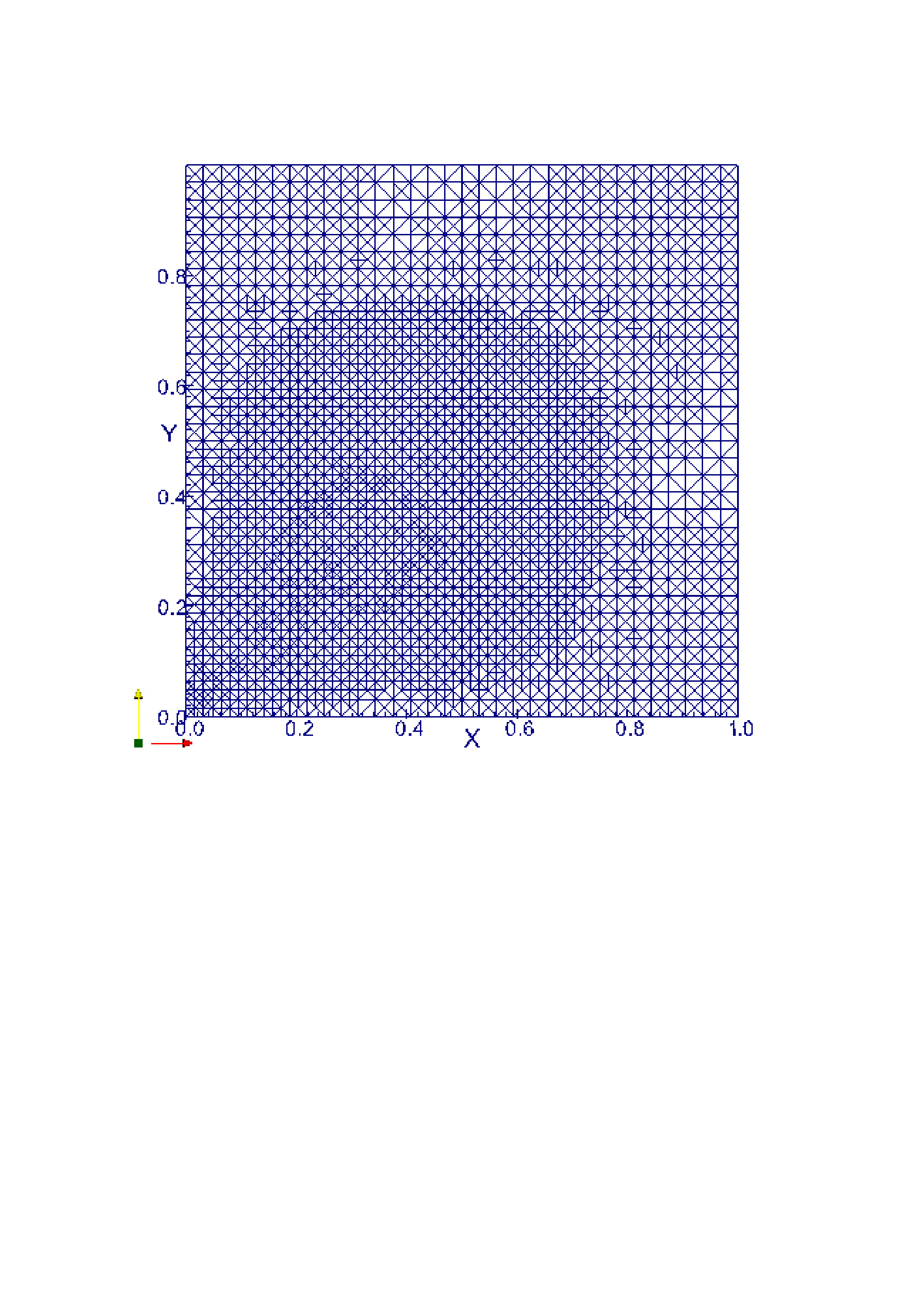}}\end{picture}
 \caption{The  cross-section of an adaptive mesh of {\bf Example 2} using linear finite
elements}\label{nonlinear_P1_mesh}
\end{minipage}
\hfill
\begin{minipage}[t]{5.0cm}
\begin{picture}(5.0,5.0)\resizebox*{5.0cm}{5.0cm} {\includegraphics{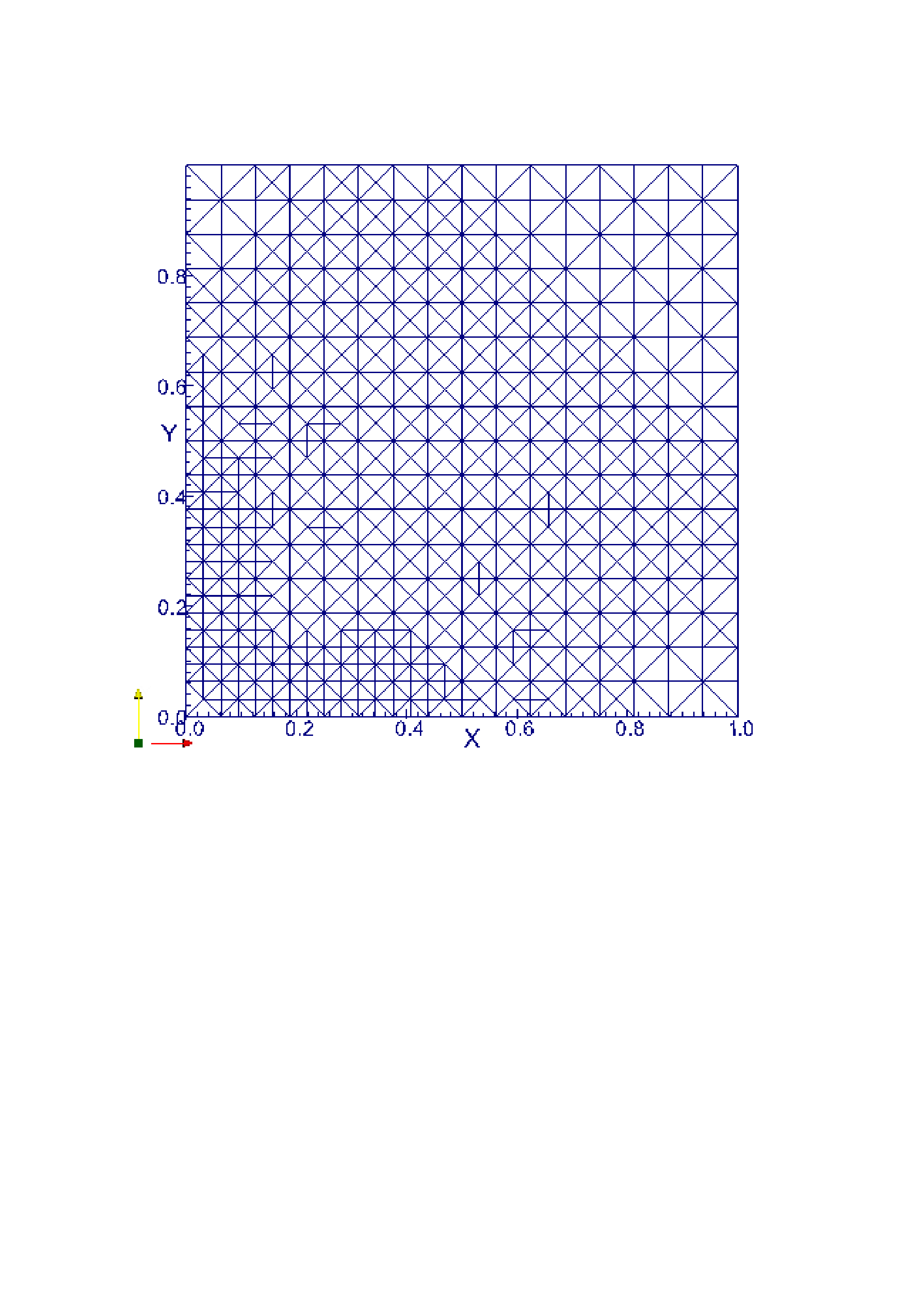}}\end{picture}
\caption{The  cross-section of an adaptive mesh of {\bf Example 2}
using quadratic finite elements}\label{nonlinear_P2_mesh}
\end{minipage}
\end{center}
\end{figure}

\begin{figure}[htbp]
\begin{center}
\setlength{\unitlength}{1cm}
\begin{minipage}[t]{5.0cm}
\begin{picture}(5.0,5.0) \resizebox*{5.0cm}{5.0cm}{\includegraphics{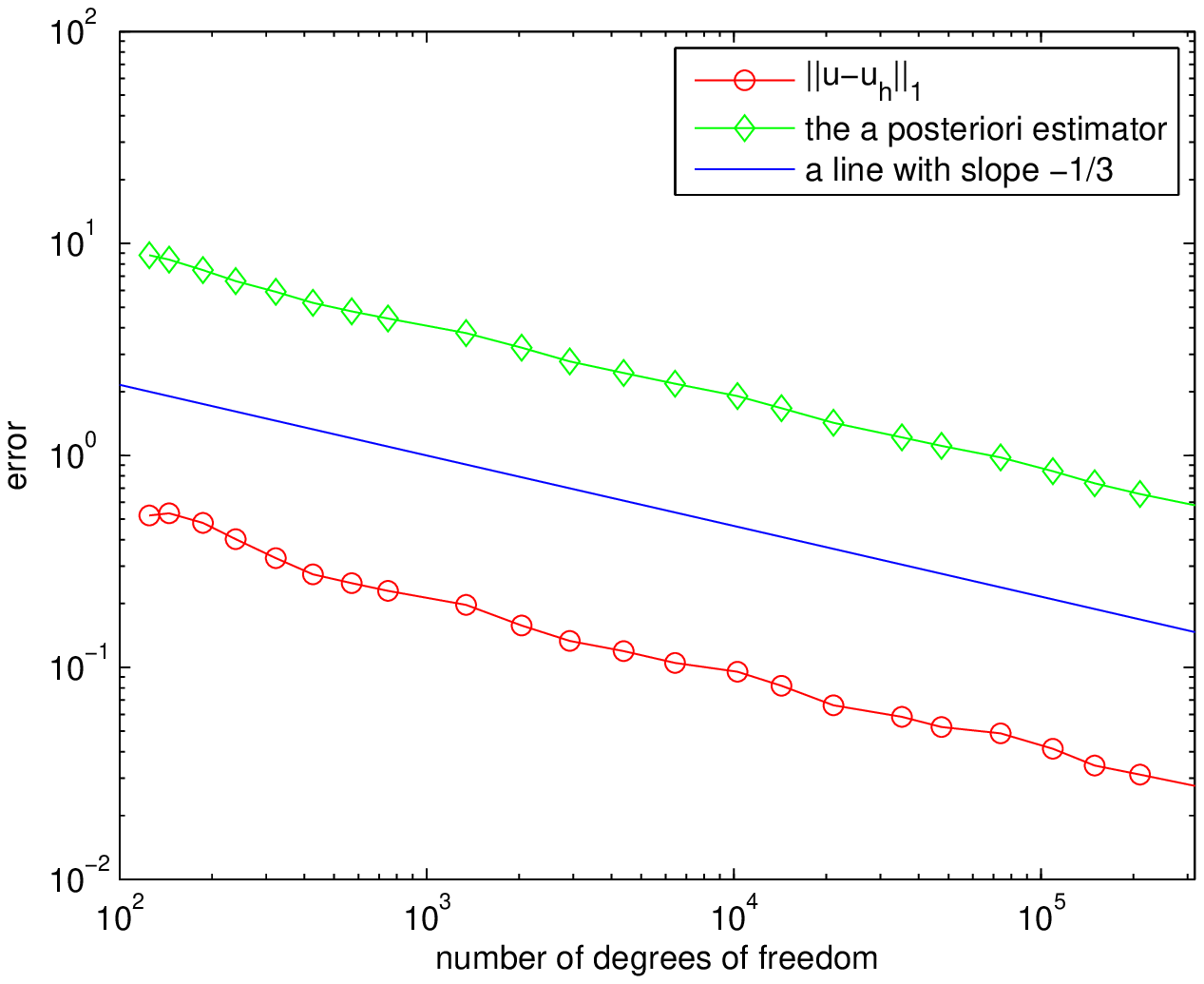}}\end{picture}
 \caption{The convergence curves
 %of $\|u - u_h\|_1$
% and the a posteriori error estimator $\eta_{h}(u_h, \mathcal{T}_h)$
 of {\bf Example 2}
 using linear finite elements }\label{nonlinear_P1}
\end{minipage}
\hfill
\begin{minipage}[t]{5.0cm}
\begin{picture}(5.0,5.0)\resizebox*{5.0cm}{5.0cm} {\includegraphics{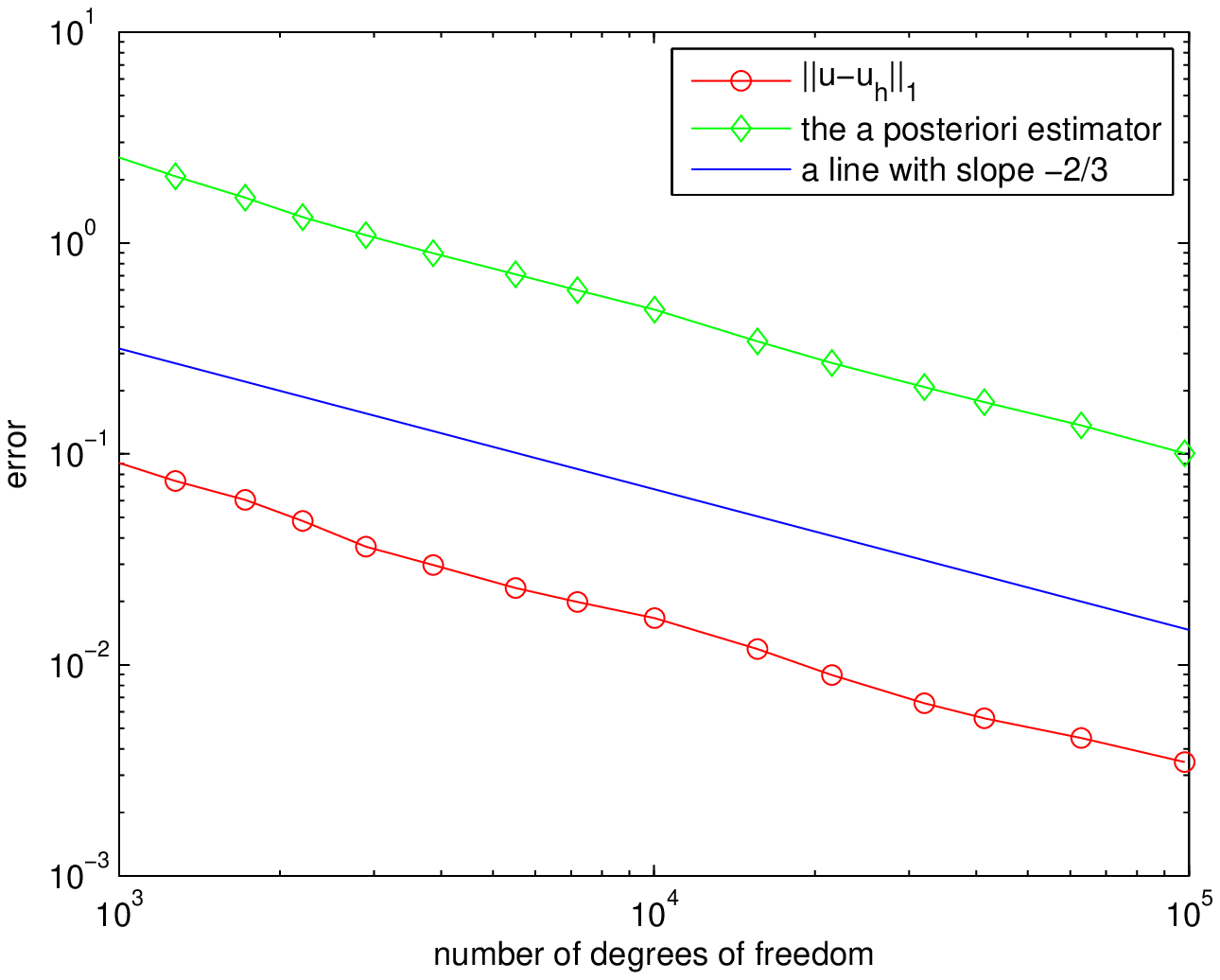}}\end{picture}
 \caption{The convergence curves
 %of $\|u - u_h\|_1$
% and the a posteriori error estimator $\eta_{h}(u_h, \mathcal{T}_h)$
 of {\bf Example 2}
 using quadratic finite elements }\label{nonlinear_P2}
\end{minipage}
\end{center}
\end{figure}

%The numerical results are presented in Fig. \ref{nonlinear_P1_mesh},
%Fig. \ref{nonlinear_P2_mesh}, Fig. \ref{nonlinear_P1}, and Fig. \ref{nonlinear_P2}.
Fig. \ref{nonlinear_P1_mesh} and Fig. \ref{nonlinear_P2_mesh} are
two adaptively refined meshes, which show that the error indicator
is good. It is shown from Fig. \ref{nonlinear_P1} and Fig.
\ref{nonlinear_P2} that $\|u - u_h\|_1$ is proportional to the a
posteriori error estimators, which implies  the a posteriori error
estimators given in section \ref{nonlinear} are efficient. Besides,
similar conclusions to that of Example 1 can be obtained from Fig.
\ref{nonlinear_P1} and Fig. \ref{nonlinear_P2}, too.

{\bf Example 3}. Consider the Kohn-Sham equation for helium atoms:
\begin{eqnarray*}\label{atom-He}
\left(-\frac {1}{2}\Delta  -\frac {2}{|x|} +
\int\frac{\rho(y)}{|x-y|}dy+ V_{xc}\right) u = \lambda u
~~\mbox{in}~ \mathbb{R}^3,
\end{eqnarray*}
and $\int_{\mathbb{R}^3}{|u|^2}=1$, here $\rho=2|u|^2.$ In our
computation of the ground state energy, we solve the following
nonlinear eigenvalue problem: Find $(\lambda,u)\in \mathbb{R}\times
H^1_0(\Omega)$ such that $\int_{\Omega}{|u|^2}dx=1$ and
\begin{equation}\label{example1}
\left\{\begin{array}{rcl} \displaystyle \left(-\frac {1}{2}\Delta -
\frac {2}{|x|}+ \int\frac{\rho(y)}{|x-y|}dy+ V_{xc}\right) u &=& \lambda u ~~\mbox{in} ~\Omega, \\
u &=& 0 ~~ \mbox{on}~\partial\Omega,
\end{array}\right.
\end{equation}
where $\Omega=(-10.0, 10.0)^3$, and $V_{xc}(\rho) =
-\frac{3}{2}\alpha(\frac{3}{\pi}\rho)^{\frac{1}{3}}$ with
$\alpha=0.77298$.  Since (\ref{example1}) is a nonlinear eigenvalue
problem, we need to linearize and solve them iteratively, which is
called the self-consistent approach
\cite{Beck-00,gong-shen-zhang-zhou-08,Kohn-Sham-65,perdew-zunger-81}.
In our computation, a Broyden-type quasi-Newton method
\cite{srivastava} were used.

In 1989, White \cite{White-89} computed  helium atoms over uniform
cubic grids and obtained ground state energy -2.8522 a.u. by using
500,000 finite element bases. While the ground state energy of
helium atoms in Software package fhi98PP \cite{Fuchs-Scheffler-99}
is -2.8346 a.u., which we take as a reference.

\begin{figure}[htbp]
\begin{center}
\setlength{\unitlength}{1cm}
\begin{minipage}[t]{5.0cm}
\begin{picture}(5.0,5.0)\resizebox*{5cm}{5cm}
{\includegraphics{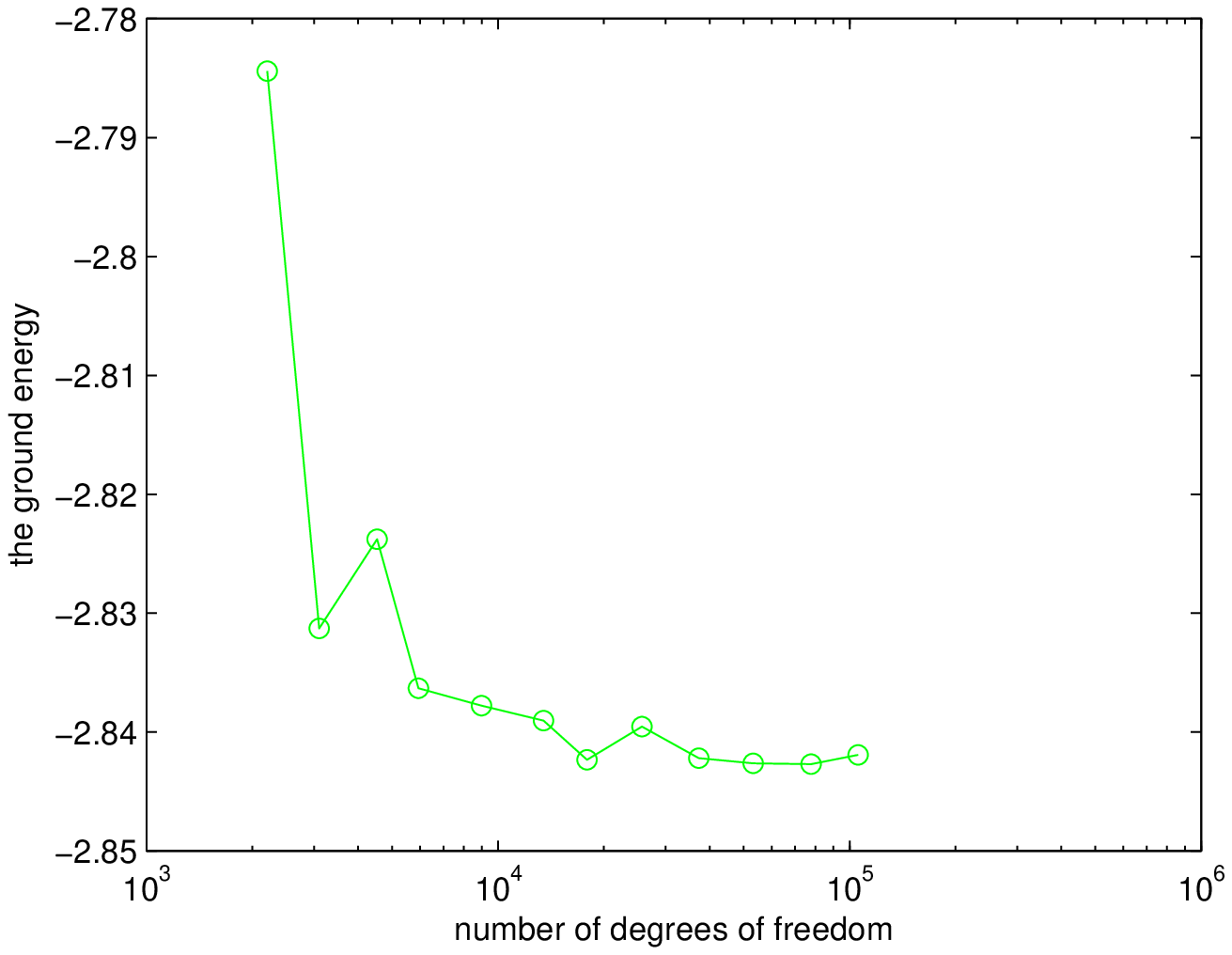}}\end{picture}
\caption{The ground state energy using linear finite elements
}\label{energy_P1}
\end{minipage}
\hfill
\begin{minipage}[t]{5.0cm}
\begin{picture}(5.0,5.0)\resizebox*{5cm}{5cm}
{\includegraphics{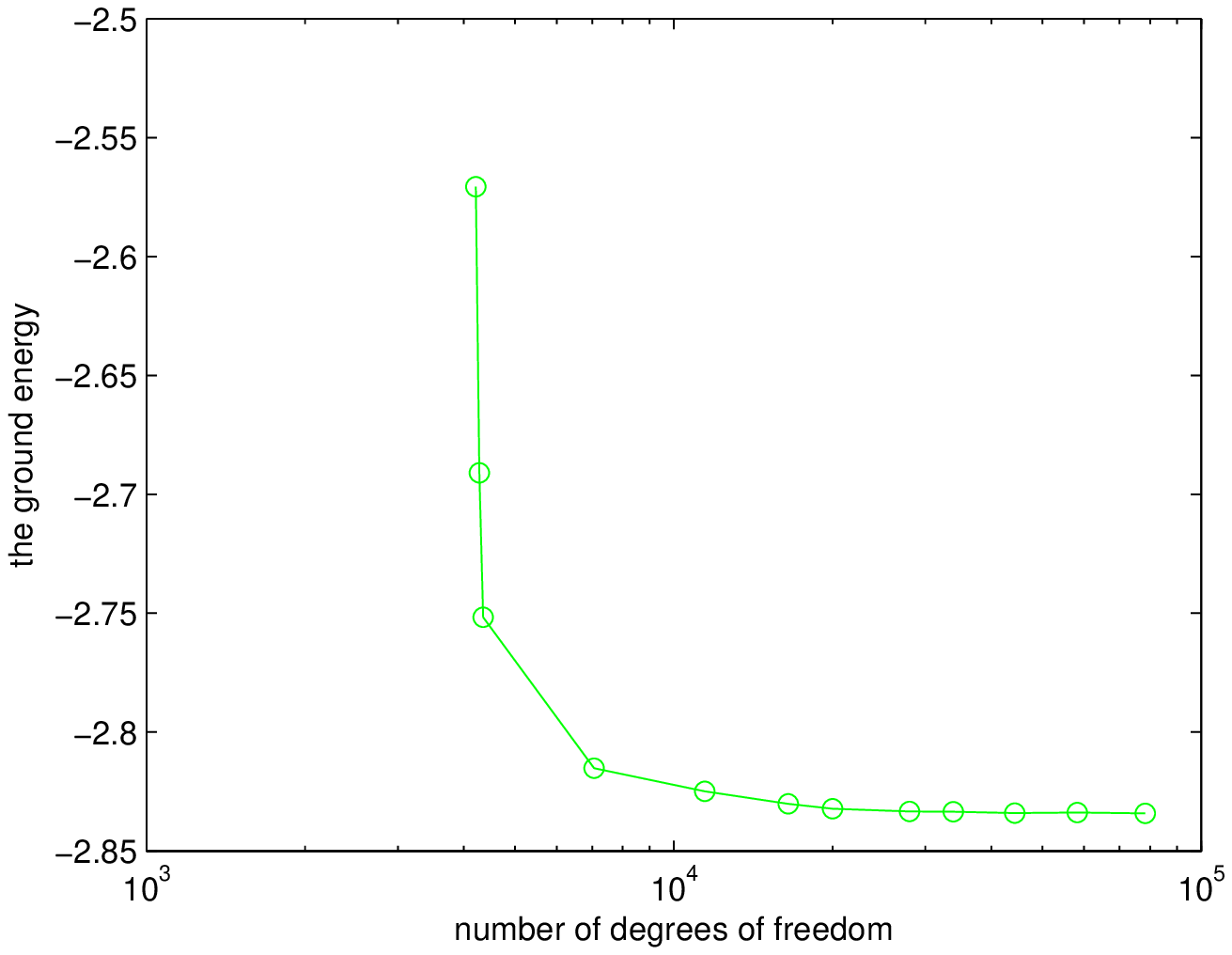}}\end{picture}
\caption{The ground state energy using quadratic finite elements}
\label{energy_P2}
\end{minipage}
\end{center}
\end{figure}

\begin{figure}[htbp]
\begin{center}
\setlength{\unitlength}{1cm}
\begin{minipage}[t]{5.0cm}
\begin{picture}(5.0,5.0)\resizebox*{5cm}{5cm}
{\includegraphics{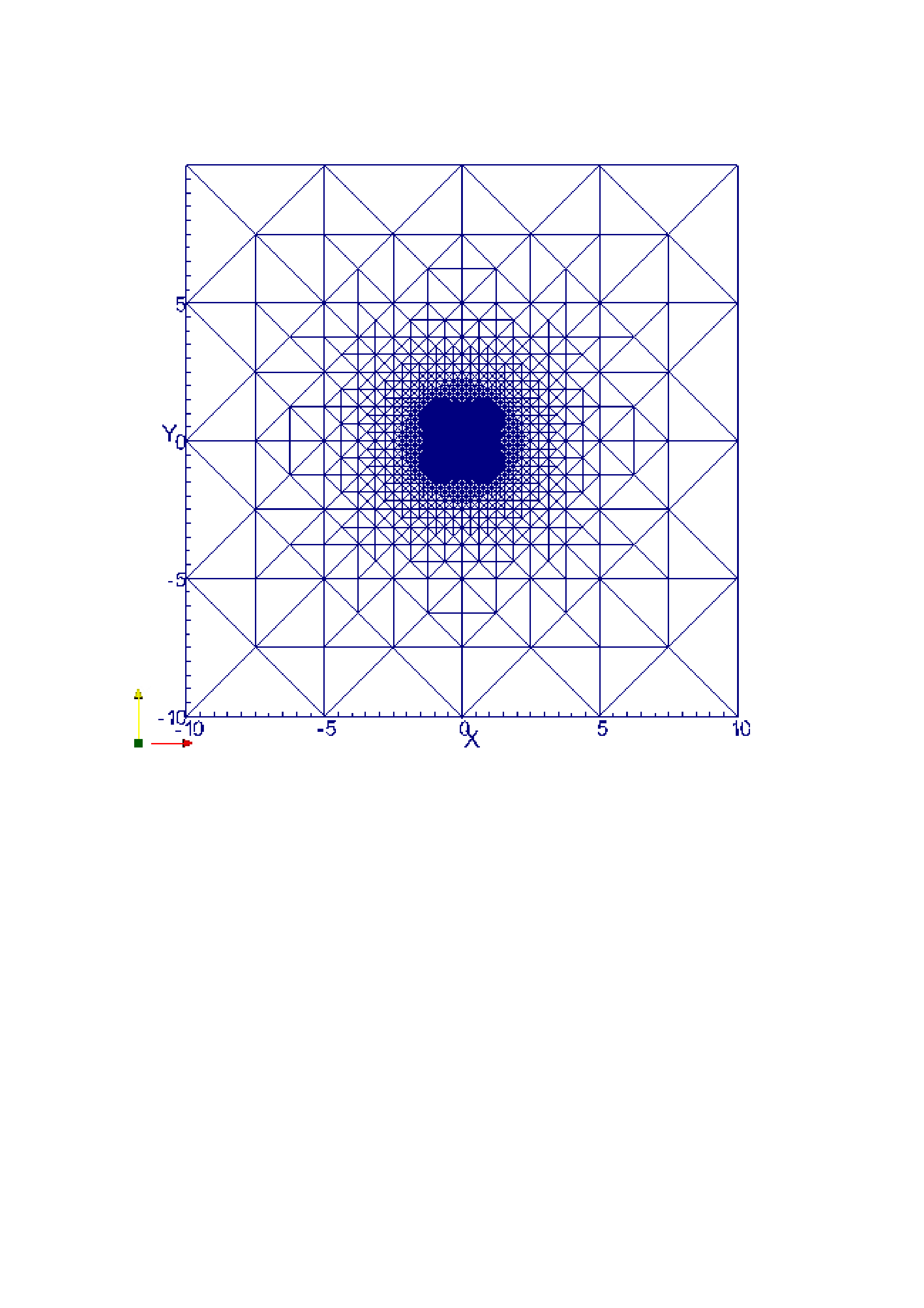}}\end{picture}
\begin{center} Z=0.0 \end{center}\caption{The cross-section of an
adaptive mesh of {\bf Example 3} using linear finite
elements}\label{he_P1_mesh}
\end{minipage}
\hfill
\begin{minipage}[t]{5.0cm}
\begin{picture}(5.0,5.0)\resizebox*{5cm}{5cm}
{\includegraphics{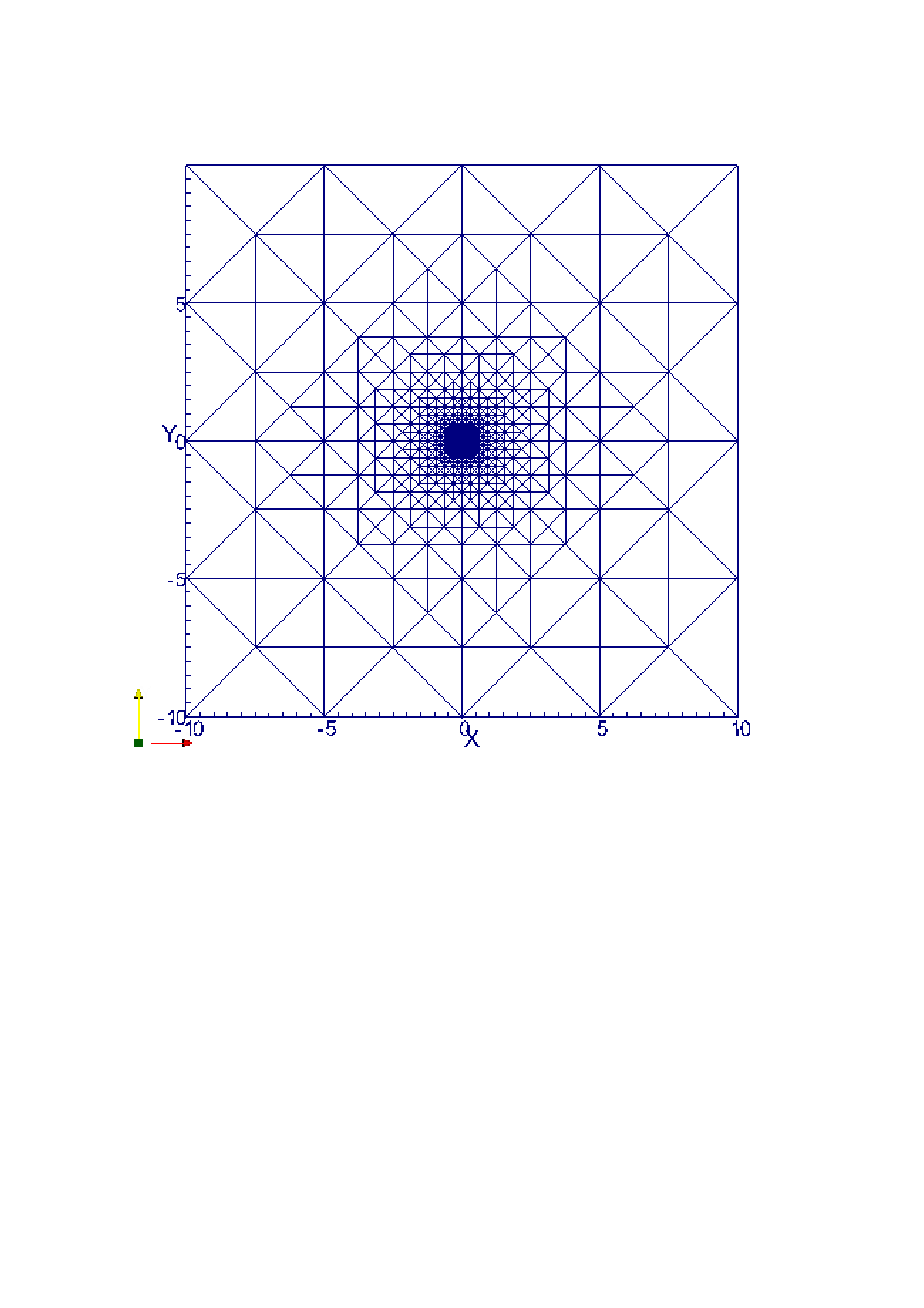}}\end{picture}
\begin{center} Z=0.0 \end{center}\caption{The cross-section of an
adaptive mesh of {\bf Example 3} using quadratic finite
elements}\label{he_P2_mesh}
\end{minipage}
\end{center}
\end{figure}

\begin{figure}[htbp]
\begin{center}
\setlength{\unitlength}{1cm}
\begin{minipage}[t]{5.0cm}
\begin{picture}(5.0,5.0)\resizebox*{5cm}{5cm}
{\includegraphics{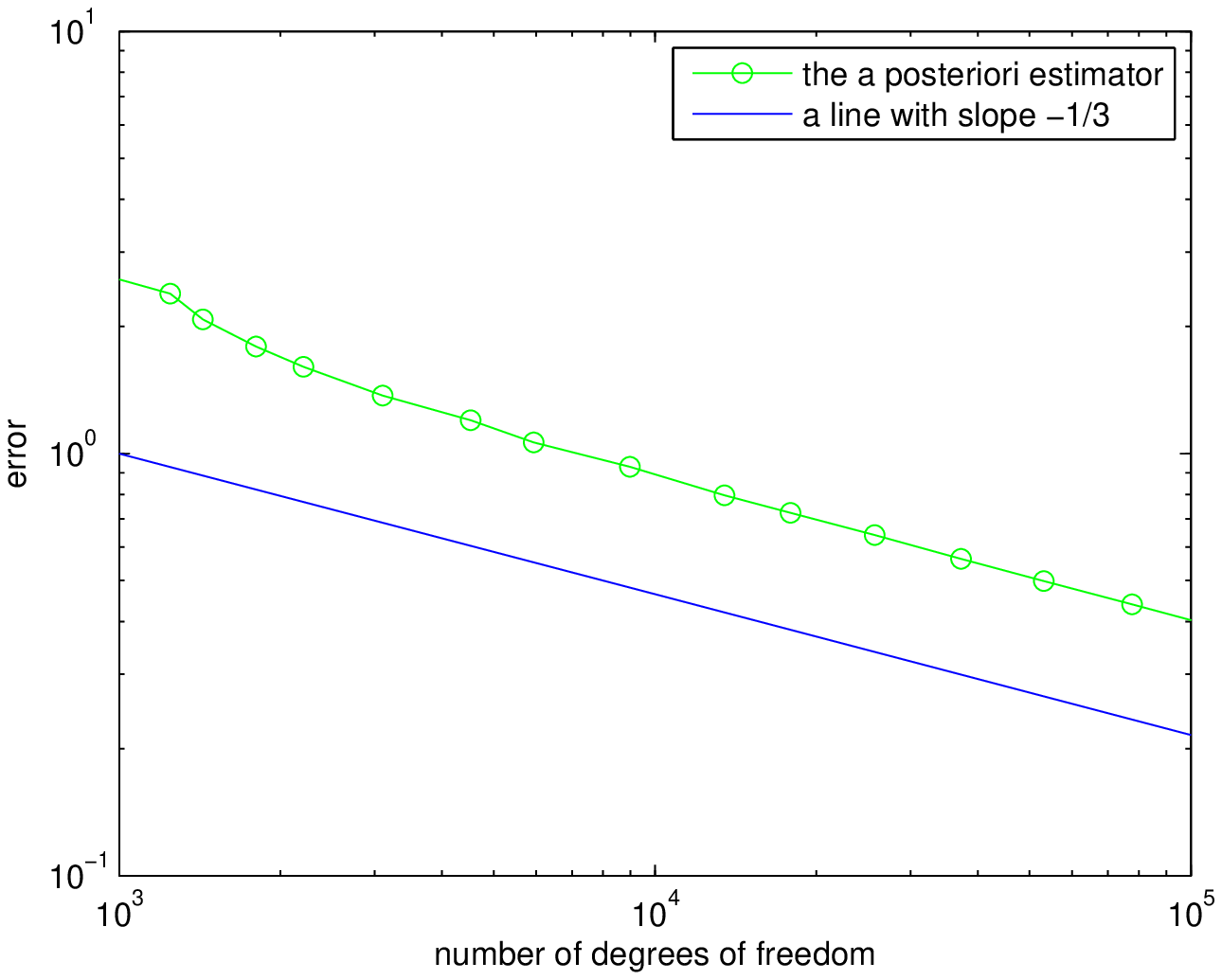}}\end{picture} \caption{The
convergence curve
%of  the a posteriori error estimator
%$\eta_{h}(u_h, \mathcal{T}_h)$ for the
% first eigenvalue
 of {\bf Example 3} using linear finite elements }\label{he_P1}
\end{minipage}
\hfill
\begin{minipage}[t]{5.0cm}
\begin{picture}(5.0,5.0)\resizebox*{5cm}{5cm}
{\includegraphics{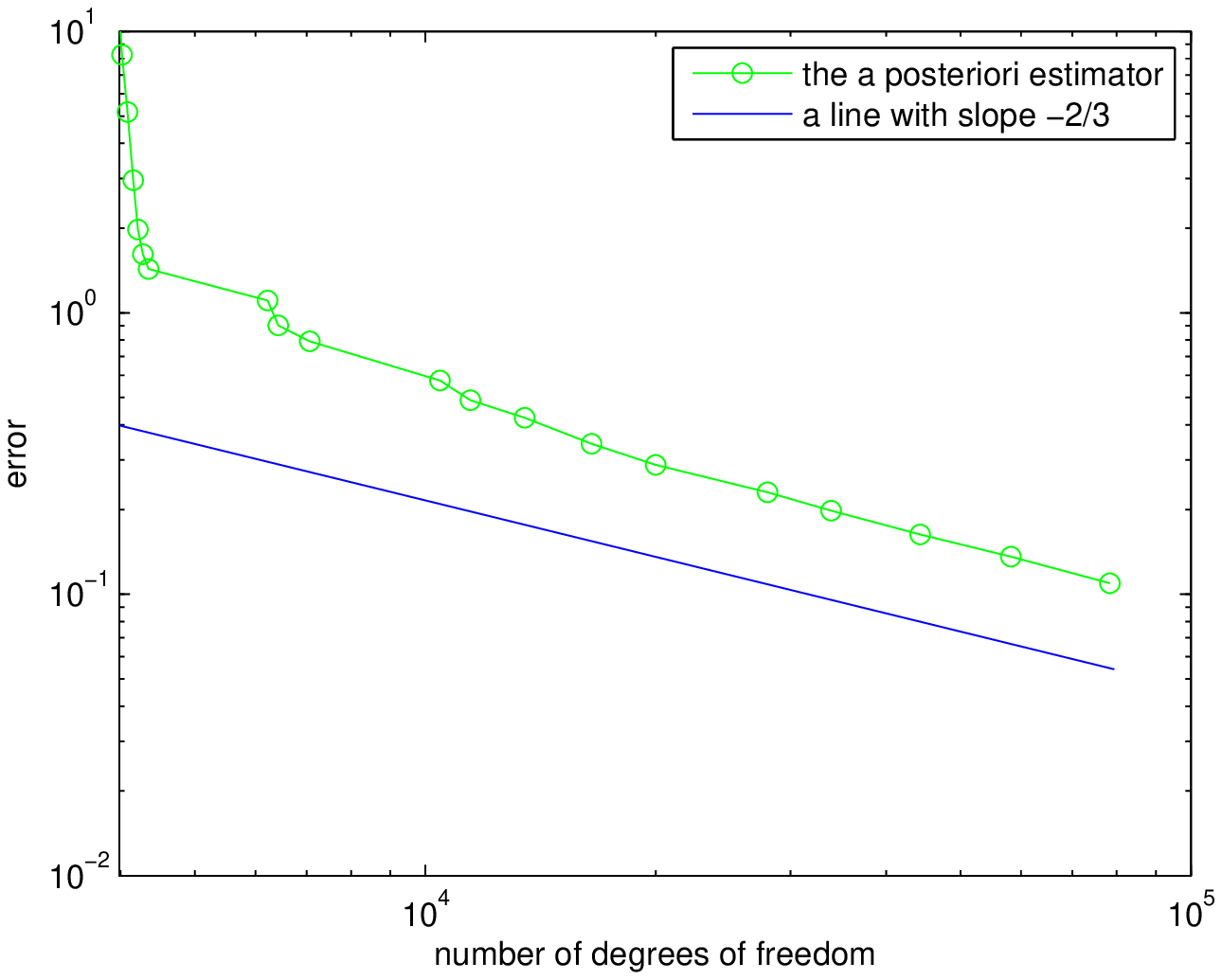}}\end{picture} \caption{The
convergence curve
 %of  the a posteriori error estimator
%$\eta_{h}(u_h, \mathcal{T}_h)$ for the
% first eigenvalue
 of {\bf Example 3} using quadratic finite elements} \label{he_P2}
\end{minipage}
\end{center}
\end{figure}

Our results are displayed in Fig. \ref{energy_P1}, Fig.
\ref{energy_P2}, Fig. \ref{he_P1_mesh}, Fig. \ref{he_P2_mesh}, Fig.
\ref{he_P1}, and Fig. \ref{he_P2}. It is seen from Fig.
\ref{energy_P2} that  the ground state energy in our computation is
close to the reference with less 100,000 degrees of freedom when the
quadratic finite element discretization is used. Some cross-sections
of the adaptively refined meshes are displayed in Fig.
\ref{he_P1_mesh} and Fig.\ref{he_P2_mesh}. Since we do not have the
exact solution, we list the convergence curves of the a posteriori
error estimators in Fig. \ref{he_P1} and Fig. \ref{he_P2} only. It
is shown from these figures  that the a posteriori error estimators
given in section \ref{nonsmooth} are efficient.\vskip 0.2cm

%Our results are displayed in Table. \ref{He_P1_table}, Table.
%\ref{He_P2_table}, Fig. \ref{he_P1_mesh}, Fig. \ref{he_P2_mesh},
%Fig. \ref{he_P1}, and Fig. \ref{he_P2}. It is seen from Table.
%\ref{He_P2_table} that  the ground energy in our computation is
%close to the reference with less 100,000 degrees of freedom when the
%quadratic finite element discretization is used. Some cross-sections
%of the adaptively refined meshes are displayed in Fig.
%\ref{he_P1_mesh} and Fig.\ref{he_P2_mesh}. Since we do not have the
%exact solution, we list the convergence curves of the a posteriori
%error estimators in Fig. \ref{he_P1} and Fig. \ref{he_P2} only. It
%is shown from these figures  that the a posteriori error estimators
%given in section \ref{nonsmooth} are efficient.

%\section{Concluding remarks}
%Based on a perturbation approach, in this paper, we have shown the
%convergence and optimal complexity of adaptive finite element
%computations for a general  elliptic problem and provided three
%typical  applications for illustration.\vskip 0.2cm

{\sc Acknowledgements.} The authors would like to thank Mr. Huajie
Chen, Dr. Xiaoying Dai, and Prof. Lihua Shen for their stimulating
discussions and fruitful cooperations that have motivated this work.

\end{document}